\newtheorem{theorem}{Theorem}
\newtheorem{lemma}[theorem]{Lemma}
\newtheorem{Remark}{Remark}
\newtheorem{coro}[theorem]{Corollary}
\newtheorem{definition}{Definition}
\newtheorem{prop}[theorem]{Proposition}
\newtheorem{question}{Question}
\numberwithin{theorem}{section}
\numberwithin{Remark}{section}
\numberwithin{Example}{section}
\numberwithin{equation}{section}
\title[Stable and Unstable Behaviour]{On Stable and Unstable Behaviour of Certain Rotation Segments}
\author{Salvador Addas-Zanata}
\address[Addas-Zanata]{Instituto de Matem\' atica e Estat\' istica da Universidade de S\~ao Paulo,
R. do Mat\~ ao, 1010 - Cidade Universitaria, S\~ ao Paulo, Brasil} 
\email{sazanata@ime.usp.br}
\author{Xiao-Chuan Liu}
\address[Liu]{Instituto de Matem\' atica e Estat\' istica da Universidade de S\~ao Paulo,
R. do Mat\~ ao, 1010 - Cidade Universitaria, S\~ ao Paulo, Brasil} 
\email{lxc1984@gmail.com}
\begin{document}

\begin{abstract}
In this paper, we study 
 non-wandering homeomorphisms of the two torus homotopic to
 the identity, 
 whose rotation sets
 are non-trivial segments from $(0,0)$ to 
some totally irrational point $(\alpha,\beta)$. 
We show that for any $r \geq 1,$ this rotation set only appears for 
$C^r$ diffeomorphisms satisfying some degenerate conditions.  
And when such a rotation set does appear, 
assuming several natural conditions 
that are generically satisfied in the area-preserving world,
we give a clearer 
description of its 
rotational behaviour. 
More precisely, 
the dynamics admits bounded deviation along the direction 
$-(\alpha,\beta)$ in the lift, and the rotation set is locked inside
an arbitrarily small 
cone with respect to
 small $C^0$-perturbations of the dynamics. 
On the other hand, 
for any non-wandering
homeomorphism $f$ with this kind of rotation set, 
we also present a 
perturbation scheme in order for the rotation set to be eaten 
by rotation set of some nearby dynamics, in the sense that 
the later set has non-empty interior and contains the former one. 
These two flavors interplay and share the common goal 
of understanding the stability/instability properties 
of this kind of rotation set. 
\end{abstract}

\maketitle{}

\tableofcontents

\section{Introduction}

The notion of a rotation number was introduced by Poincar\'e 
in order to gather information on the average "rotational" linear speed of a 
dynamical system. 
Rotation theory is well understood 
 only for circle homeomorphisms 
 or endomorphisms, 
  and it
 is still a great source of 
 problems in two dimensional manifolds (annulus, two torus, higher genus 
 surfaces, and so on).
 Focusing only on the two torus, 
 there
 are two most important and related topics, namely, 
 the shape of the rotation set 
 and how it changes depending on the dynamics. 
 In this paper, we will work on both topics, based on one specific 
 type of rotation set. 
 On the one hand, we try to understand the 
variation of a rotation set depending on the 
homeomorphism, under different regularities. On the other hand, 
we study how a certain shape of the rotation set restricts the 
dynamics.

In general, 
with respect to the Hausdorff topology,
 the rotation set varies 
 upper-semicontinuously.
 It is also known that if 
 the rotation set of some $f$ has non-empty interior, 
 then it is in fact continuous at  
 $f$ (see ~\cite{MZ} and ~\cite{MZ2}).  
 Moreover, 
 for a $C^0$-open and dense subset of
homeomorphisms, the rotation set is stable (i.e., it remains unchanged 
under small perturbations, 
see~\cite{Passeggi_rational} and \cite{Andres_C0}). Note that this is true 
both in the 
set of all homeomorphisms, and in the set of area-preserving ones. 

Below, in order to state our main results,
 we will use some notations that are mostly standard, and 
postpone their precise definitions to Section~\ref{preliminary}.

Our objective is to look 
at an interesting situation, 
where the rotation set is a segment connecting $(0,0)$ to some totally irrational 
point $(\alpha,\beta)$. 
We want to understand 
how it can be changed under sufficiently small
$C^0$ perturbations, and 
what properties should such a  homeomorphism satisfy. 
An Example with such a rotation set was first described 
in~\cite{Handel_no_periodic_orbit} by Handel, who attributed it to Katok. 
It is produced by a slowing-down 
procedure from a constant speed irrational flow.
 A smooth area-preserving example was obtained 
by Addas-Zanata and Tal 
in~\cite{Salvador_fareast}. See also the paper~\cite{Kwapisz_Diophantine} 
by Kwapisz and Mathison which shows
some special ergodic properties 
for an explicit example.  

In the $C^0$ category, the more precise task for us is
to study when and how 
the rotation set of a dynamical system is 
ready to grow.   
One of the first results on this subject appeared in ~\cite{Salvador}, where it was proved that if the rotation set $\rho(\widetilde f)$ 
 contains a non-rational vector $(\alpha,\beta)$ as an extremal point, 
 then for any supporting line $r$ (i.e., a straight line containing $(\alpha, \beta)$, 
 such that the rotation set avoids one
connected component of its complement, denoted $O$), 
 by certain arbitrarily small $C^0$-perturbations, 
 the new rotation set will intersect $O$
 (See also \cite{Guiheneuf_instability} for a $C^1$ version of this theorem). 
  In this paper, we are able to go one step further in this direction.

\begin{theorem}\label{instability_general}
Let $\widetilde f \in \widetilde{\text{Homeo}}_{0,\text{nw}}(\Bbb T^2)$, whose 
rotation set $\rho(\widetilde f)$ is a segment from $(0,0)$
to some totally irrational point $(\alpha,\beta)$.
For any $\varepsilon>0$, there exists 
$\widetilde g \in \widetilde {\text{Homeo}}_0(\Bbb T^2)$
 with $\text{dist}_{C^0}(\widetilde g, \widetilde f)<\varepsilon$, 
such that,
 $\rho(\widetilde g)$ has non-empty interior, 
 and $\rho(\widetilde f)\backslash \{(0,0)\}
  \subset \text{Int}(\rho(\widetilde g))$. 
\end{theorem}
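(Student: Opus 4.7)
The plan is to combine, within a single $C^{0}$-perturbation of size less than $\varepsilon$, a fixed-point preservation argument producing $(0,0)\in\rho(\widetilde g)$ with three simultaneous and disjointly supported applications of the supporting-line pushing construction of~\cite{Salvador} at the irrational extremal point $v=(\alpha,\beta)$; Misiurewicz--Ziemian convexity will then assemble the four produced rotation vectors into a convex quadrilateral whose interior contains $\rho(\widetilde f)\setminus\{(0,0)\}$.

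\emph{Convex-geometric setup.} Let $L$ be the line through $(0,0)$ and $v$, and let $v^{\perp}$ be a unit vector perpendicular to $v$. Put $n_{1}=v^{\perp}$, $n_{2}=-v^{\perp}$, and $n_{3}=v/|v|$. The line through $v$ perpendicular to each $n_{i}$ is a supporting line of the segment $\rho(\widetilde f)$ ($L$ itself for $i=1,2$, and the perpendicular of $L$ at $v$ for $i=3$). An elementary planar argument shows that any convex set $K\subset\mathbb{R}^{2}$ that contains $(0,0)$ together with three vectors $w_{i}$ satisfying $n_{i}\cdot w_{i}>n_{i}\cdot v$ has $v$ in its interior --- indeed $v$ lies on the open diagonal from $(0,0)$ to $w_{3}$ of the convex quadrilateral with vertices $(0,0),w_{1},w_{3},w_{2}$ --- and consequently contains $\rho(\widetilde f)\setminus\{(0,0)\}$ inside $\mathrm{Int}(K)$.

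\emph{Construction of $\widetilde g$.} Since $\widetilde f$ is non-wandering and $(0,0)\in\rho(\widetilde f)$, standard fixed-point results for torus homeomorphisms produce a fixed point $z_{0}$ of $\widetilde f$; we preserve $(0,0)\in\rho(\widetilde g)$ by supporting all forthcoming perturbations in the complement of a small neighbourhood of the $\mathbb Z^{2}$-orbit of $z_{0}$. For each $i=1,2,3$, the total irrationality and extremality of $v$ in $\rho(\widetilde f)$ allow the perturbation lemma of~\cite{Salvador} to be applied to the supporting line associated with $n_{i}$, producing a $C^{0}$-small localised perturbation whose rotation set contains a new vector $w_{i}$ with $n_{i}\cdot w_{i}>n_{i}\cdot v$. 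The technical heart of the argument is the simultaneous realisation of these three pushes inside one $\widetilde g$: one revisits the proof of~\cite{Salvador} --- a localised push on a small open disk near an orbit whose Birkhoff averages approach $v$ --- and verifies that this supporting disk may be prescribed inside any open region of $\mathbb T^{2}$ meeting the support of an ergodic invariant measure of rotation vector $v$. The three pushes can then be placed in pairwise disjoint open disks, all disjoint from the $\mathbb Z^{2}$-orbit of $z_{0}$, and composed into a single $\widetilde g$ with $\mathrm{dist}_{C^0}(\widetilde g,\widetilde f)<\varepsilon$, $\widetilde g(z_{0})=z_{0}$, and $w_{1},w_{2},w_{3}\in\rho(\widetilde g)$.

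\emph{Conclusion and main obstacle.} The Misiurewicz--Ziemian convexity theorem gives $\rho(\widetilde g)\supset\mathrm{Conv}\{(0,0),w_{1},w_{2},w_{3}\}$, and the convex-geometric setup completes the proof. The principal difficulty is this simultaneity/localisation step: one must open up the proof of~\cite{Salvador} to certify that its pushing construction can be relocated to an arbitrary prescribed open disk hitting the support of an ergodic $v$-rotating measure, so that three such pushes can be performed disjointly and away from $z_{0}$. This is where total irrationality of $v$ plays an essential role, through the minimality on $\mathbb T^{2}$ of the torus rotation by $v$ carried by such measures.
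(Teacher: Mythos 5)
Your proposal contains a genuine mathematical gap in the convex-geometric step, and the step you flag as the ``principal difficulty'' is precisely the one the paper had to solve by a different route, not by reusing~\cite{Salvador} as a black box.

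\emph{The geometric gap.} You assert that $v$ lies on the open diagonal from $(0,0)$ to $w_{3}$, but the conclusion of~\cite{Salvador} applied to the supporting line perpendicular to $v$ only yields some $w_{3}$ with $\langle w_{3},v\rangle>|v|^{2}$; nothing forces $w_{3}$ to lie on the line $L$ through $(0,0)$ and $v$. If $w_{3}$ drifts off $L$, the segment from $(0,0)$ to $w_{3}$ misses $v$ entirely, and the convex hull of $\{(0,0),w_{1},w_{2},w_{3}\}$ need not contain $v$. Concretely, take $v=(1,0)$, $w_{1}=(0.5,\epsilon)$, $w_{2}=(0.5,-\epsilon)$, $w_{3}=(1.01,10\epsilon)$: all three half-plane conditions hold, yet the convex hull is the triangle with vertices $(0,0)$, $w_{2}$, $w_{3}$ (the point $w_{1}$ falls inside it), and at $x=1$ this triangle lies entirely in $y\in[9.78\epsilon,9.9\epsilon]$, so $v=(1,0)$ is not even in the hull, let alone its interior. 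Upper semicontinuity of the rotation set does not rescue this: all four points can be $\delta$-close to the segment $[0,v]$ and still fail to surround $v$. The paper is explicit about this obstruction in its introduction --- ``the original method in~\cite{Salvador} does not give enough information to locate the position of these rational numbers, except they are outside the rotation set'' --- and Theorem~\ref{instability_general} is exactly the sharper statement that requires controlling the location, not merely the half-plane, of the new rotation vectors.

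\emph{The simultaneity/localization gap.} You acknowledge that relocating the push of~\cite{Salvador} to a prescribed disk, three times disjointly, is the heart of the matter, but you do not carry it out, and it cannot be carried out without addressing where the new vectors land (cf.\ the gap above). The paper does not reuse~\cite{Salvador} at all; it builds its own perturbation scheme. Specifically, it splits into two cases: if $\widetilde f$ has unbounded deviation along $(\alpha,\beta)$, it chains $\widetilde f$-orbit segments using the bounded perpendicular deviation (Theorem~\ref{Fabio_Bounded}) and the small-displacement Lemma~\ref{displacement} so that the cumulative jumps land the concatenated pseudo-orbit in a prescribed region ($\Omega_0$ or $\Delta_0$), then closes several disjoint pseudo-orbits at once with Lemma~\ref{many-closing}; if $\widetilde f$ has bounded deviation along $(\alpha,\beta)$, it passes via J\"ager's semi-conjugacy (Lemma~\ref{jager}) to the rigid rotation $R_{(\alpha,\beta)}$, where the location of returns is controlled by minimality, and pulls the resulting pseudo-orbits back through fibers of the semi-conjugacy (Proposition~\ref{good_point_inthe_fiber}). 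This case split and the explicit pseudo-orbit construction are the essential new content; your proposal, as written, does not supply them.
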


\begin{Remark} 
If the map $\widetilde{f}$ from the above theorem preserves area, then the perturbed map
$\widetilde{g}$ can also be chosen in the area-preserving world. This is a 
well-known fact (follows from Lemma \ref{firstclosing}), but it deserves to be mentioned, as 
it is one of the only cases known 
to us on how to perturb a non-wandering homeomorphism and remain non-wandering.
\end{Remark}

It is interesting to ask if the same statement is 
also true in $C^1$ topology. 
If we go on to consider $C^r$ diffeomorphisms,
$r\geq 1$, 
clearer descriptions
should be expected. 
In particular, we proved the following result, which suggests
 the non-genericity of the set of non-wandering diffeomorphisms 
 with this special kind of rotation set. We say "suggest", because 
it is not known how to perturb a non-wandering diffeomorphisms and remain 
non-wandering (unless of course, in some particular cases, like area-preserving maps). 

\begin{theorem}\label{generic_case} 
Let 
$\widetilde f\in 
\widetilde{\text{Diff}}_{0,\text{nw}} (\Bbb T^2)$. 
Assume that for every integer $n>0$, the linear part $Df^n$ computed 
at each $n$-periodic point does not have $1$ as an eigenvalue and 
there are no saddle-connections. 
Then, the rotation set  $\rho(\widetilde f)$ 
is not  a segment from $(0,0)$ to some totally irrational point 
$(\alpha,\beta)$.
\end{theorem}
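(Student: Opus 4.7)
I would argue by contradiction, assuming $\rho(\widetilde f)=[(0,0),(\alpha,\beta)]$ with $(\alpha,\beta)$ totally irrational. Since $(0,0)$ is a rational extremal point of $\rho(\widetilde f)$ and $f$ is non-wandering, a version of Franks' periodic point theorem produces a fixed point $p\in\mathbb{T}^2$ lifting to some $\widetilde p$ fixed by $\widetilde f$. The hypothesis that $Df^n$ omits the eigenvalue $1$ at every $n$-periodic orbit rules out parabolic/degenerate behaviour, so $p$ must be either hyperbolic or elliptic. I would first dispose of the hyperbolic case, and treat the elliptic case as a side issue.

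In the hyperbolic case, non-wandering combined with hyperbolicity forces, by a classical argument, that $W^u(p)\setminus\{p\}$ accumulates on $W^s(p)$; the no-saddle-connection hypothesis upgrades this to a transverse homoclinic intersection, and the Birkhoff--Smale theorem then yields a horseshoe $K$ near the closure of the corresponding homoclinic loop. The key observation is that such a loop lifts to a path in $\mathbb{R}^2$ from $\widetilde p$ to $\widetilde p + v$ for some $v\in\mathbb{Z}^2$, and the standard symbolic description of $K$ produces periodic orbits whose rotation vectors approximate every rational point on the segment $[(0,0),v]$. For $\rho(\widetilde f)$ to remain inside the line $\mathbb{R}\cdot(\alpha,\beta)$, every such $v$ must lie in $\mathbb{Z}^2\cap\mathbb{R}\cdot(\alpha,\beta)$, and total irrationality then forces $v=(0,0)$: every transverse homoclinic loop at $\widetilde p$ must be null-homotopic in $\mathbb{T}^2$.

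The contradiction is then drawn by combining this null-homotopy constraint with an orbit realising the rotation vector $(\alpha,\beta)$; since $(\alpha,\beta)$ is extremal and totally irrational, the corresponding minimal set meets every neighbourhood of $p$. I would concatenate long segments of such an orbit with null-homotopic homoclinic excursions at $p$ to build pseudo-orbits whose total lift-displacement acquires a nontrivial component transverse to $(\alpha,\beta)$, and then shadow them by genuine periodic orbits using the hyperbolic structure of the horseshoe $K$; the resulting rotation vectors would land off the line $\mathbb{R}\cdot(\alpha,\beta)$, contradicting the hypothesis. I expect the main difficulty to lie precisely in this shadowing/concatenation step, which requires careful universal-cover geometry to guarantee that the transverse displacement introduced by the horseshoe excursion survives the concatenation and is realised by an actual orbit. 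A secondary obstacle is the case where every lift-fixed point over $(0,0)$ is elliptic; here I would try to extract a hyperbolic periodic orbit with rotation vector $(0,0)$, either by passing to sufficiently high iterates or by a Birkhoff-zone-of-instability argument at the boundary of an elliptic island, and then reduce to the hyperbolic case already handled.
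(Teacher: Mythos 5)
Your overall plan (argue by contradiction, locate a hyperbolic saddle fixed over $(0,0)$, exploit the no-saddle-connection hypothesis to force homoclinic behaviour, and derive a non-contractible periodic orbit) is the same spirit as the paper's Section~5, but two of your steps are genuine gaps that the paper handles with substantially more machinery.

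First, you dispose of the non-hyperbolic case too quickly. Franks' theorem gives a fixed point, but the hypothesis on $Df^n$ only excludes the eigenvalue $1$, not other non-saddle behaviours, and there is no a priori reason that a fixed point realising $(0,0)$ is a saddle. The paper instead constructs a fully essential invariant open set $U$ around a recurrent point with rotation vector $(\alpha,\beta)$, uses the bounded disk lemma to control the finitely many maximal invariant disks, applies Carath\'eodory prime-end theory to show that all fixed points inside invariant disks have positive index (since the prime-end rotation numbers must be irrational by the absence of saddle connections), and then invokes the Lefschetz fixed-point formula to extract a negatively-indexed, hence hyperbolic, saddle $Q_\ast$ in the essential part (Lemma~5.2). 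Your suggested workaround for the elliptic case (``high iterates'' or a ``Birkhoff zone of instability'') is speculative and does not obviously produce such a saddle while keeping the rotation vector constraint.

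Second, and more seriously, the step ``non-wandering combined with hyperbolicity forces $W^u(p)\setminus\{p\}$ to accumulate on $W^s(p)$'' is not a one-line classical fact in this generality, and the concluding concatenation/shadowing argument does not close. The paper proves the opposite first: Proposition~5.3 shows the branches of the lifted saddle $\widetilde Q_\ast$ \emph{cannot} intersect (else either a topological horseshoe gives a periodic orbit with a non-zero rational rotation vector, or one gets a bounded invariant disk containing $Q_\ast$, both contradictions). Then Lemma~5.4 and Proposition~5.5 show all four branches are unbounded and that their projections to the torus share the same closure, using Oliveira's delicate analysis of invariant branches. The contradiction then comes directly from Oliveira's argument forcing an intersection of projected branches, which either contradicts Proposition~5.3 or yields a non-contractible periodic orbit. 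Your alternative — concatenating an $(\alpha,\beta)$-orbit with null-homotopic homoclinic excursions and shadowing by the horseshoe — has no clear shadowing mechanism: the horseshoe's shadowing only applies to pseudo-orbits that stay in the locally maximal hyperbolic set, while your concatenated pseudo-orbit necessarily leaves it; moreover it is unclear that the ``transverse displacement'' survives the concatenation, or that any resulting rotation vector leaves the line $\mathbb{R}\cdot(\alpha,\beta)$. This step would need a completely different justification.
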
 

Note that the above conditions are satisfied by 
generic area-preserving $C^r$ diffeomorphisms, for all $r \geq 1.$
Naturally, the next
 task is the following. 
How can we 
understand a 
typical non-wandering 
diffeomorphism $f$ 
which does admit such a rotation set?
This seems to be hard, 
because very little is known on the set of non-wandering homeomorphisms
or diffeomorphisms of a surface. 
As we said above, 
unlike in the area-preserving case,
there does not exist a method 
available to make a perturbation 
within the set of 
non-wandering homeomorphisms. 

Nevertheless, in the area-preserving setting, 
if one wants to obtain more 
information on
 the non-generic diffeomorphisms, traditionally,
 one works with generic families. 
This inspires us to 
formulate nice 
conditions, which hold true 
in a broader set 
of diffeomorphisms. This approach 
eventually helps us to detect properties, 
which general non-wandering homeomorphisms might
satisfy. 
Along this direction, 
we obtain 
the following two results.

\begin{theorem}\label{carac_map}
Suppose $f\in \text{Diff}_{0,\text{nw}}(\Bbb T^2)$ 
satisfies certain natural conditions, 
 see Definition~(\ref{generic_in_generic}).
 Suppose also that $\rho(\widetilde {f})$ for some lift $\widetilde f$ 
 is a segment from $(0,0)$
to a totally irrational point $(\alpha,\beta).$ 
Then, $f$ has (finitely many) fixed points 
(and no periodic point which is not fixed), all with $0$ 
topological index, and the local dynamics around them is obtained by gluing 
exactly two hyperbolic sectors. The stable branch 
of any of these fixed points does not intersect the unstable branch of any other point. 
Moreover, in the plane, for each fixed point, its unstable and stable 
branches are bounded in the direction orthogonal to $(\alpha,\beta)$ and 
the unstable (resp. stable) branch goes to infinity following the vector $(\alpha,\beta)$
(resp. $-(\alpha,\beta)$). 
Finally, any stable or unstable branch of a fixed point is dense in the torus.
\end{theorem}

\begin{theorem}\label{generic_family_not_perturbable}
Under the same hypotheses of the above Theorem,
for any non-zero integer vector $(a,b),$ $a$ and $b$ coprimes, there exists a simple
closed curve $\theta$ in $\Bbb T^2,$ with homological direction $(a,b),$ 
such that any connected component of the lift of $\theta$ to the plane is a Brouwer line for $\widetilde{f}$.  
Moreover, for any straight line $\gamma$ containing 
$(0,0)$ and avoiding $(\alpha,\beta)$,
there exists $\varepsilon>0$, such that, for any 
$\widetilde g\in \widetilde{\text{Homeo}}_{0}(\Bbb T^2)$, with 
$d_{C^0}(\widetilde f,\widetilde g)<\varepsilon$,
$\rho(\widetilde g)$ is contained in the union of $\gamma$ 
and one of the connected components of its complement, the one which contains $\rho(\widetilde f)\backslash \{(0,0)\}$.

\end{theorem}

The next Corollary is a direct consequence of Theorem \ref{generic_family_not_perturbable}. 
Nevertheless, we will 
actually prove the Corollary before the theorem (see Lemma~\ref{Generic_Bounded_in_(-alpha,-beta)}). 

\begin{coro}\label{coro_bounded} Let $f$ satisfy  the conditions in
 Theorem~\ref{generic_family_not_perturbable}. 
Then it has bounded deviation along the direction 
$-(\alpha,\beta)$.
\end{coro}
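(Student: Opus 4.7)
The aim is to bound $\langle \widetilde f^n(x)-x,-(\alpha,\beta)\rangle$ from above, uniformly in $x\in \Bbb R^2$ and $n\geq 0$. The natural route is to exhibit two Brouwer lines for $\widetilde f$---topological lines $L$ with $\widetilde f(L)\cap L=\emptyset$---whose oriented normals straddle $(\alpha,\beta)$. Each such line forces a one-sided bound on orbit displacement in its normal direction; combining two transverse bounds then yields bounded deviation in any direction lying in the cone of normals, in particular in $-(\alpha,\beta)$.

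\textbf{Step 1 (the core).} Using the hypotheses in Definition~\ref{generic_in_generic} together with the assumption that $\rho(\widetilde f)$ is the segment $[(0,0),(\alpha,\beta)]$, the plan is to construct two essential simple closed curves $\gamma_1,\gamma_2 \subset \Bbb T^2$ with non-collinear integer homology classes $(a_i,b_i)$, such that each connected component of the lift of $\gamma_i$ to $\Bbb R^2$ is a Brouwer line for $\widetilde f$. I would choose the classes so that the oriented normals $\nu_i=(-b_i,a_i)$ satisfy $\nu_i\cdot(\alpha,\beta)>0$ for $i=1,2$ and so that $(\alpha,\beta)$ lies in the interior of the convex cone $\Bbb R_{>0}\nu_1+\Bbb R_{>0}\nu_2$; density of integer directions in $S^1$ makes such a choice possible on either side of the (irrational) ray through $(\alpha,\beta)$. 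The construction of the curves themselves should proceed through Le Calvez's equivariant transverse foliation of $\widetilde f$---whose leaves are Brouwer lines in $\Bbb R^2\setminus \widetilde{\mathrm{Fix}}(\widetilde f)$---using the non-degeneracy of periodic orbits and the no-saddle-connection condition, together with the restricted rotational behavior, to close up suitably sloped leaves into essential simple closed curves of the prescribed homology.

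\textbf{Steps 2--3 (routine once Step 1 is in hand).} For each $i$, the lift of $\gamma_i$ is a $\Bbb Z^2$-equivariant, pairwise disjoint family of topological lines $\{L_k^{(i)}\}_{k\in\Bbb Z}$, which we label so that $k$ increases in direction $\nu_i$. Since $(\alpha,\beta)\in\rho(\widetilde f)$ with $\nu_i\cdot(\alpha,\beta)>0$, there exist invariant measures realizing the rotation vector $(\alpha,\beta)$; combined with the Brouwer property, this forces $\widetilde f(L_k^{(i)})$ to lie on the positive-$\nu_i$ side of $L_k^{(i)}$ (otherwise a typical orbit for such a measure would have average displacement with non-positive $\nu_i$-component). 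A standard monotonicity argument then yields a uniform bound
\[
\langle \widetilde f^n(x)-x,\nu_i\rangle \geq -C_i,
\]
with $C_i$ no larger than the $\nu_i$-width of a fundamental domain for the family. Writing $(\alpha,\beta)=c_1\nu_1+c_2\nu_2$ with $c_1,c_2>0$ and combining yields $\langle \widetilde f^n(x)-x,-(\alpha,\beta)\rangle \leq c_1 C_1+c_2 C_2$, i.e., bounded deviation along $-(\alpha,\beta)$.

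\textbf{Main obstacle.} The entire difficulty is concentrated in Step 1---realizing Brouwer lines as lifts of essential simple closed curves with prescribed, transverse integer homology directions, and with orientations compatible with $(\alpha,\beta)$. The genericity conditions of Definition~\ref{generic_in_generic} are precisely the leverage needed: the non-degeneracy of periodic orbits and the absence of saddle-connections should rule out the pathologies that would obstruct closing up leaves of the equivariant foliation into essential curves with the required homology. Once Step 1 is available, Steps 2--3 are immediate, which explains why the paper is able to extract this corollary as a lemma ahead of the full statement of Theorem~\ref{generic_family_not_perturbable}, and then feed it back to produce Brouwer curves in \emph{every} non-zero integer direction.
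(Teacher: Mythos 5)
Your proposal reverses the logical dependency used in the paper, and the reversal leaves a genuine gap. You admit that all of the difficulty lives in Step 1, namely in producing essential simple closed curves in two transverse integer directions whose lifts are Brouwer lines for $\widetilde f$. But in the paper this is precisely the content of Theorem~\ref{Thm_Generic_Family}, which is proved \emph{after} and \emph{using} the present corollary: the proofs of Lemma~\ref{unstable_goes_to_infinity}, Proposition~\ref{good_curve}, Lemma~\ref{good_neighbourhood}, and Lemma~\ref{N_times_becomes_free} all explicitly invoke the bounded deviation in the direction $-(\alpha,\beta)$ that you are trying to establish. So you cannot appeal to the paper's Brouwer-curve construction; you would need an independent construction that does not presuppose bounded deviation, and ``should proceed through Le~Calvez's equivariant transverse foliation\dots'' is a plan, not an argument. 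In particular it is not clear that the foliation's leaves can be closed up into essential loops of prescribed homology without first ruling out exactly the kind of unbounded drift in $-(\alpha,\beta)$ that the corollary controls.

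The paper's proof is a short contrapositive that bypasses curve construction entirely. The hypotheses in Definition~\ref{generic_in_generic} force every $f$-fixed point to have index zero with the local picture from Lemma~\ref{section_two_lemma}; Proposition~\ref{AC} then says that no sufficiently small $C^0$-perturbation $\widetilde g$ can have $(0,0)$ in $\text{int}(\rho(\widetilde g))$. On the other hand, Theorem~\ref{case_of_0} from Section~3 shows that if $\widetilde f$ had unbounded deviation along $-(\alpha,\beta)$, one could produce such perturbations. These two statements contradict each other, so the deviation must be bounded. If you want to salvage your outline, you would need to either (a) supply a self-contained proof of the existence of the two Brouwer curves without using bounded deviation, or (b) switch to the paper's route and show how $\mathcal K^r$ plus Lemma~\ref{section_two_lemma} feeds into Proposition~\ref{AC}.
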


We also consider the unbounded deviation along the direction $(\alpha,\beta)$.
The next theorem requires one more condition, the existence of an invariant foliation, which 
is clearly satisfied in the particular case when
$f$ is the time-one map of some flow. 
See its more precise statement in Section 7. 

\begin{theorem}\label{generic_bounded_unbounded}
Consider any $\widetilde f$ as in Theorem~\ref{generic_family_not_perturbable}.
If $f$ preserves a $C^0$ foliation of $\Bbb T^2$, then 
$\widetilde f$ has unbounded deviation along $(\alpha,\beta)$.
\end{theorem}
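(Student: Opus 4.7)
The plan is to use the invariant foliation $\mathcal{F}$ to reduce the question to one-dimensional dynamics on a single leaf, where the existence of a slow orbit will contradict any uniform bound on the $(\alpha,\beta)$-deviation. First I would analyze the topological type of $\mathcal{F}$: because the rotation set lies on a line through the origin of totally irrational slope $\beta/\alpha$, there is no room for a nontrivial transverse rotation, and $\mathcal{F}$ cannot have closed leaves with homology transverse to $(\alpha,\beta)$. Consequently, the action induced by $\widetilde{f}$ on the lifted leaf space of $\widetilde{\mathcal{F}}$ has zero translation number, so that, after an appropriate choice of lift, each leaf $\widetilde{L}$ of $\widetilde{\mathcal{F}}$ is $\widetilde{f}$-invariant.

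Next I would exploit the resulting one-dimensional dynamics. For each such leaf $\widetilde{L}$, a properly embedded line of slope $\beta/\alpha$ in $\mathbb{R}^2$, the restriction $\widetilde{f}|_{\widetilde{L}}$ is an orientation-preserving homeomorphism of $\mathbb{R}$ with a well-defined translation number $\tau(\widetilde{L})$. The shape of $\rho(\widetilde{f})$ forces $\tau(\widetilde{L}) \in [0,\|(\alpha,\beta)\|]$ (with respect to unit-speed parametrization), and both endpoints are realized on some leaves. In particular, there is a leaf $\widetilde{L}_0$ with $\tau(\widetilde{L}_0)=0$. Combining this with the non-degeneracy at periodic points and the absence of saddle-connections (which are among the hypotheses inherited from Theorem~\ref{generic_case} via Theorem~\ref{generic_family_not_perturbable}), one promotes the vanishing translation number to the existence of an isolated fixed point of $\widetilde{f}$ on $\widetilde{L}_0$, and further to a point $\widetilde{x}_0$ on $\widetilde{L}_0$ whose forward orbit is bounded in $\mathbb{R}^2$ (for instance, on the local stable set of a hyperbolic-like fixed point of the one-dimensional restriction).

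For this $\widetilde{x}_0$, the quantity $\langle \widetilde{f}^n(\widetilde{x}_0) - \widetilde{x}_0, (\alpha,\beta)\rangle = O(1)$, while the linear prediction $n(\alpha^2+\beta^2)$ diverges, so that $\langle \widetilde{f}^n(\widetilde{x}_0) - \widetilde{x}_0, (\alpha,\beta)\rangle - n(\alpha^2+\beta^2) \to -\infty$, which is exactly the failure of boundedness of the deviation along $(\alpha,\beta)$. The main obstacle is making rigorous the leafwise invariance of $\widetilde{f}$ in the first step when $\mathcal{F}$ is only $C^0$ and may admit an exceptional minimal set with Cantor-like transverse structure (Denjoy-type phenomenon). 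This is precisely where the foliation hypothesis enters substantively, rather than merely Franks-type arguments applied to the extremal vector $(0,0) \in \rho(\widetilde{f})$: one needs the full leaf structure to locate a \emph{bounded} orbit (not just one whose rotation vector is $(0,0)$), and to control how such orbits distribute transversely to the $(\alpha,\beta)$-direction.
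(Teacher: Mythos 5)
Your proposal has a fundamental error in the concluding step, one that makes the whole argument collapse regardless of how the earlier reductions are justified. Recall from Definition~\ref{definition_deviation} (and its use in~(\ref{Bounded_by_S})) that bounded deviation along $(\alpha,\beta)$ means there is $M$ with
\begin{equation*}
\text{pr}_{(\alpha,\beta)}\bigl(\widetilde f^n(\widetilde x)-\widetilde x-n(\alpha,\beta)\bigr)\leq M
\quad\text{for all } n\geq 0,\ \widetilde x\in\Bbb R^2,
\end{equation*}
i.e.\ an upper bound on how far the orbit can overshoot the extremal vector $(\alpha,\beta)$. To prove \emph{unbounded} deviation you must exhibit orbits for which this quantity tends to $+\infty$. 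Your argument produces a point $\widetilde x_0$ with a \emph{bounded} forward orbit, so that $\text{pr}_{(\alpha,\beta)}(\widetilde f^n(\widetilde x_0)-\widetilde x_0)-n(\alpha^2+\beta^2)\to -\infty$; but this is the wrong sign. Going to $-\infty$ is perfectly compatible with the upper bound $\leq M$ -- indeed any fixed point of $\widetilde f$ already has orbit $\equiv\widetilde p$, so bounded orbits always exist and carry no information about the deviation in the $(\alpha,\beta)$ direction (they merely witness the fact that $(0,0)$ lies in the rotation set). So the last sentence of your proof, ``which is exactly the failure of boundedness of the deviation along $(\alpha,\beta)$,'' is simply false.

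Beyond that, the preliminary reduction is not secure either: the claim that the induced action on the lifted leaf space has zero translation number and that every leaf becomes $\widetilde f$-invariant after a suitable choice of lift is asserted, not argued, and is stronger than what holds in general for a $C^0$ invariant foliation whose leaf through a fixed point is $W^s(p)\cup W^u(p)$ -- only that particular leaf is obviously invariant. You rightly flag the Denjoy-type issue, but that is not where the proof breaks. The paper's argument runs in the opposite direction: it assumes bounded deviation along $(\alpha,\beta)$ for contradiction, uses Lemma~\ref{geq_-M} to deduce that every orbit in $\mathcal{S}_{(\alpha,\beta)}$ stays within $O(1)$ of the line $n(\alpha,\beta)$ (hence away from any small disk $B$ around a fixed point $p$), and then uses the invariant foliation to build a strip $\widetilde H$ between two lifts of $\mathcal F(p)$ together with a ``fundamental domain'' $\widetilde K'\subset\widetilde K\subset B$ through which every bi-unbounded orbit in $\widetilde H$ must pass. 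The contradiction comes from this funnelling, not from locating a slow orbit. If you want to rescue a leafwise argument, you would need to produce an orbit whose $(\alpha,\beta)$-coordinate grows \emph{faster} than $n(\alpha^2+\beta^2)$ by an unbounded amount, which is a qualitatively different task than finding a slow one.
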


This last result leads to the following 
interesting
question. 
\begin{question}\label{Question_unbounded}
	 For $\widetilde f$ which lifts $f\in \text{Homeo}_{0,\text{nw}}(\Bbb T^2)$, 
  suppose $\rho(\widetilde f)$ is the line segment from $(0,0)$ to a totally
  irrational $(\alpha,\beta)$.
Is it true that $\widetilde f$ has
 unbounded deviation along the direction $(\alpha,\beta)$?
\end{question}

To conclude, let us briefly describe the organisation of this paper and the main
ideas used in the proofs. 

In Section 2, we will summarise some notations and previous results that will be used along 
the text. 

In Section 3, we introduce a perturbation technique, 
which is very useful under the condition of unbounded deviations along some direction.
The purpose is to find 
$\varepsilon$-pseudo periodic orbits, 
 which can be "closed" in order to become periodic orbits, so with rational rotation vector. 
 The difficulty is that, 
 a priory, the original method in \cite{Salvador} does not give enough information 
 to locate the position of these rational numbers, except that they are outside the rotation set
 $\rho(\widetilde f)$. 
 
 In Section 4, we focus on the case when the map has bounded deviations. 
 We apply a result proved by J{\"a}ger 
(see~\cite{Linearization}) in order to obtain 
 a semi-conjugacy between the restriction of the dynamics to a certain 
 minimal set and the rigid torus rotation. 
 Then, we prove that whenever one can perturb the rigid rotation, we
 can also perturb the original homeomorphism. Combining both results from Section 3 and 
 Section 4, we complete the proof of Theorem~\ref{instability_general}. 
 
 From Section 5, we start working with diffeomorphisms. 
 There, we prove Theorem~\ref{generic_case}.
 There are three main ingredients in this proof. The first one belongs to the theory of
  prime ends rotation numbers. 
  The second one is the so called \textit{bounded disk lemma},
  firstly proved by Koropecki and Tal in \cite{Strictly_Toral}. 
  The third one consists of certain properties of invariant branches at hyperbolic periodic saddles, mostly from Fernando Oliveira's paper \cite{Oliveira}.

%
%
%
%
%
%
%
%
%

 In Section 6, we work with diffeomorphisms satisfying   
 certain conditions,
 which in the area-preserving case, are generic in the complement of
 the set of maps which satisfy the hypotheses of theorem \ref{generic_case}. 
See for instance \cite{Dumortier} and \cite{Melo}. 
First, we collect several results describing dynamical properties of maps which
satisfy the conditions in Definition~(\ref{generic_in_generic}). These results 
together imply Theorem \ref{carac_map} and are also an important part
 in the proof of existence of the Brouwer lines (Theorem 
\ref{generic_family_not_perturbable}).

    In Section 7, 
    we continue to study diffeomorphisms satisfying the
    conditions from Section 6 and prove 
    Theorem~\ref{generic_bounded_unbounded}.
  \\
  
 \noindent \textbf{Notational Remark.} There will be 
 a small abuse of notation among the text below. 
 For example,
 when we introduce integers, positive constants along 
 the arguments,  
 choices will be made differently in different 
 subsections, sometimes with the same name. 
 However, they will be consistent 
 within one single subsection. \\
   
\noindent \textbf{Acknowledgements.} We thank Andres Koropecki and 
F\' abio Armando Tal for many helpful discussions. 
S.A-Z. was partially supported by CNPq grant 
(Grant number 306348/2015-2).
X-C.L. is supported by Fapesp P\'os-Doutorado grant
 (Grant Number 2018/03762-2).

\section{Preliminaries and Previous Results}\label{preliminary}
The main purpose of this section is to fix notations and to
recall some previous results for later use. In some cases, the 
formulation contains some minor variations from the reference, 
and we will give some 
short proofs only stressing the differences. 
We will also show some elementary lemmas as well. 

Note that some of the notations were already used in the statements of the
theorems in the introduction. 
\subsection{Planar Topology and Dynamics}\label{notation1}\hfill\\

For any planar subset $M$, denote by $\text{Inter}(M)$ the interior of 
$M$, and by $\partial M$ the boundary of $M$.
The following property in planar topology will be used. We say a planar set $F$ separates
the points $x$ and $y$ if they are in different connected components of $F^c.$

\begin{lemma}\label{Newman}[Theorem 14.3 in Chapter V of~\cite{Newman_elements}]. 
Let $F$ be a closed subset of the plane $\Bbb R^2$,
separating two points $x$ and $y$. Then some connected component of $F$
also separates $x$ and $y$.
\end{lemma}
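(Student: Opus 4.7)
The plan is to first extract a \emph{minimal} closed subset $F_0 \subseteq F$ that still separates $x$ from $y$, and then prove that any such minimal separator is necessarily connected. Once this is done, $F_0$ is contained in a single connected component $C$ of $F$, and since any closed superset of a separator is itself a separator, $C$ separates $x$ from $y$, which is precisely the conclusion.

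First, I would arrange for $F$ to be compact by passing, if necessary, to the one-point compactification $S^2 = \Bbb R^2 \cup \{\infty\}$ and replacing $F$ by $F \cup \{\infty\}$; this preserves the separation of $x$ and $y$. I would then apply Zorn's lemma to the family $\mathcal{F}$ of closed subsets of $F$ that separate $x$ from $y$, ordered by reverse inclusion. The only nontrivial point is that for any descending chain $\{F_\alpha\} \subseteq \mathcal{F}$, the intersection $F_\infty = \bigcap_\alpha F_\alpha$ still separates $x$ from $y$: if some arc joining $x$ to $y$ avoided $F_\infty$, then by compactness of the arc and nestedness of the chain it would avoid some single $F_\alpha$, contradicting that $F_\alpha$ separates. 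Zorn then delivers a minimal element $F_0$.

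The core of the argument is to show this minimal $F_0$ is connected. Suppose for contradiction that $F_0 = A \sqcup B$ with $A, B$ disjoint nonempty closed sets. By minimality, neither $A$ nor $B$ alone separates $x$ from $y$, so there exist arcs $\alpha$ and $\beta$ joining $x$ to $y$ with $\alpha \cap A = \emptyset$ and $\beta \cap B = \emptyset$. I would then splice $\alpha$ and $\beta$ together, using a planar topology argument (essentially the \emph{unicoherence} of $S^2$, namely that disjoint closed sets each of which fails to separate cannot jointly separate), to construct an arc from $x$ to $y$ that avoids both $A$ and $B$, contradicting the fact that $F_0 = A \cup B$ separates. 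This splicing step is the main obstacle; it is handled in Newman's book by a careful analysis of successive entries and exits of the two arcs with respect to small neighborhoods of $A$ and $B$, exploiting the fact that $A$ and $B$ are positively separated.

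The concluding step is immediate: being a connected subset of $F$, $F_0$ lies inside a single connected component $C$ of $F$, and $C \supseteq F_0$ inherits the separation property between $x$ and $y$, proving the lemma.
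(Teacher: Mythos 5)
The paper does not prove this lemma; it is quoted verbatim from Newman's book (Theorem~14.3 of \cite{Newman_elements}), so there is no internal proof to compare against. Your strategy --- extract a minimal closed separator $F_0\subseteq F$ by Zorn's lemma, prove connectedness of a minimal separator by Janiszewski's theorem (what you call unicoherence), and then observe that the component of $F$ containing $F_0$ inherits the separation property --- is indeed the standard route to this result, and the Zorn step and the final ``superset of a separator is a separator'' step are both fine.

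There is, however, a genuine slip in how you handle the compactification. After replacing $F$ by $F\cup\{\infty\}\subseteq S^2$, the minimal separator $F_0$ that Zorn produces is a closed subset of $F\cup\{\infty\}$, and nothing prevents $\infty\in F_0$. In that case $F_0$ is contained in a single component of $F\cup\{\infty\}$, \emph{not} of $F$; the component of $F\cup\{\infty\}$ through $\infty$ can meet several distinct unbounded components of $F$, so your last sentence (``being a connected subset of $F$, $F_0$ lies inside a single connected component $C$ of $F$'') does not follow as written. The gap is fixable: if $F_0\setminus\{\infty\}$ were disconnected, writing it as $A\sqcup B$ with $A,B$ relatively clopen, the sets $A\cup\{\infty\}$ and $B\cup\{\infty\}$ are closed in $S^2$ and meet only in $\{\infty\}$, so Janiszewski again forces one of them to separate $x$ from $y$, contradicting minimality of $F_0$. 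Hence $F_0\setminus\{\infty\}$ is a connected subset of $F$ that still separates $x$ from $y$ in $\Bbb R^2$, and the component of $F$ containing it is the sought separator. Alternatively, you can avoid the issue entirely by never compactifying: Zorn still works in $\Bbb R^2$ because the arc is compact even though the $F_\alpha$ need not be, and Janiszewski for two \emph{disjoint} closed subsets of the plane follows from the $S^2$ version since their closures in $S^2$ meet at most in $\{\infty\}$, which is connected. But then you lose the ``positively separated'' convenience you invoke for the splicing argument, so you have to decide which subtlety you prefer to handle explicitly.
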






Let $M$ denote a metric space, and 
consider a homeomorphism $f:M\to M$. 
For any starting point $x_0$, we often 
use subscript to 
denote the $f$-iterates of $x_0$, i.e., 
$x_n=f^n(x_0)$. 
For $\varepsilon>0$, 
we call an $\varepsilon$-pseudo periodic orbit (with period $n$), 
for a finite sequence of points 
$\{x^{(0)}, x^{(1)}, \cdots, x^{(n-1)} \}$, with the following properties.
\begin{align}
\text{dist} (f(x^{(i)}),x^{(i+1)}) & <\varepsilon, \text{ for any } i=0,\cdots,n-2, \text{ and, }\\
\text{dist} (f(x^{(n-1)}), x^{(0)}) & <\varepsilon.
\end{align}
Moreover, any point in an $\varepsilon$-pseudo periodic orbit as above, will be referred as 
an $\varepsilon$-pseudo periodic point. 

We say a point $p$ is $f$-recurrent, if 
there exists $n_k\to \infty$, such that $f^{n_k}(p)\to p$.
The following is a standard fact for all 
non-wandering dynamical systems on compact metric spaces $M$. 
We provide a proof for completeness.

\begin{lemma}\label{dense} Suppose $f:M \to M$ 
is non-wandering. Then, the set of $f$-recurrent points, 
denoted by $R(f)\subset M$, is dense. 
\end{lemma}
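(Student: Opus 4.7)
The plan is to exhibit, in an arbitrary non-empty open set $V\subset M$, a point which is $f$-recurrent. The main ingredient is the following cascading consequence of the non-wandering hypothesis, which I would establish first: for any non-empty open set $W\subset M$ and any integer $N\geq 1$, there exist $y\in W$ and $n\geq N$ with $f^n(y)\in W$. By non-wandering, start with $y_0\in W$ and $k_0\geq 1$ satisfying $f^{k_0}(y_0)\in W$; shrink to an open neighborhood $W_1\ni y_0$ with $\overline{W_1}\subset W$ and $f^{k_0}(\overline{W_1})\subset W$, possible by continuity of $f^{k_0}$ at $y_0$; then apply non-wandering inside $W_1$ to find $y_1\in W_1$ and $k_1\geq 1$ with $f^{k_1}(y_1)\in W_1$, and observe $f^{k_0+k_1}(y_1)=f^{k_0}(f^{k_1}(y_1))\in f^{k_0}(\overline{W_1})\subset W$. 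Iterating this procedure $\ell$ times produces $y_\ell\in W$ with return time $k_0+k_1+\cdots+k_\ell\geq \ell+1$ into $W$, so choosing $\ell\geq N-1$ settles the claim.

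With the cascading lemma in hand, I would construct a nested sequence of non-empty open sets $V\supset U_0\supset \overline{U_1}\supset U_1\supset \overline{U_2}\supset \cdots$ with $\mathrm{diam}(U_k)<1/2^k$, together with integers $n_k\geq k$, such that $f^{n_k}(\overline{U_k})\subset U_{k-1}$ for every $k\geq 1$. The inductive step is immediate: given $U_{k-1}$, the cascading lemma applied with $W=U_{k-1}$ and $N=k$ yields $z\in U_{k-1}$ and $n_k\geq k$ with $f^{n_k}(z)\in U_{k-1}$; by continuity of $f^{n_k}$, I then select an open $U_k\ni z$ of diameter less than $1/2^k$ with $\overline{U_k}\subset U_{k-1}$ and $f^{n_k}(\overline{U_k})\subset U_{k-1}$. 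By compactness of $M$, the intersection $\bigcap_{k\geq 0}\overline{U_k}$ is non-empty, and since the diameters shrink to zero it reduces to a single point $x\in V$. For each $k\geq 1$, both $x$ and $f^{n_k}(x)$ lie in $\overline{U_{k-1}}$, so $d(f^{n_k}(x),x)\leq \mathrm{diam}(U_{k-1})<1/2^{k-1}$, while $n_k\geq k\to\infty$. Hence $x\in R(f)\cap V$, proving density.

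I expect the cascading preliminary to be the only substantive point of the proof, since the non-wandering hypothesis by itself provides only one return of possibly small period, whereas recurrence demands returns at arbitrarily large times. Once that is available, the rest is a routine nested-set construction with compactness.
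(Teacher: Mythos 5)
Your proof is correct and follows essentially the same nested-set construction as the paper: shrink a chain of open sets $U_0 \supset \overline{U_1} \supset \cdots$ with vanishing diameters, arrange increasing return times $n_k$, and extract the recurrent point from the intersection. The one place you add substance is the preliminary ``cascading'' lemma establishing that every non-empty open set admits returns at arbitrarily large times; the paper's proof simply asserts that one can choose $n_{k+1}>n_k$ with $B_k\cap f^{-n_{k+1}}(B_k)\neq\emptyset$, leaving this fact implicit. Your cascading argument (shrink a neighborhood through $f^{k_0}$, then apply non-wandering again inside it) is a sound way to fill that gap; a slightly shorter variant is to apply non-wandering directly to $C=f^{n}(B)\cap B$ and compose the two return times, but both amount to the same two-step use of the hypothesis.
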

\begin{proof} 
Pick any open disk $B=B_0$. It suffices to show 
$B\bigcap R(f)\neq \emptyset$. 
Since $f$ is non-wandering, there is some $n_1$ such that 
$f^{-n_1}(B_0)\bigcap B_0 \neq \emptyset$. 
Then we can choose some small closed disk 
$B_1$, with radius 
smaller than $1$, such that 
$B_1 \subset f^{-n_1}(B_0)\bigcap B_0$. 
Note that every point in $B_1$ will return to 
$B_0$ at iterate $n_1$. 

Inductively, suppose we have found increasing integers $n_1<\cdots <n_k$,
 and closed disks $\{B_i\}_{i=1}^k$, such that for all 
 $i=1,\cdots, k$, $B_i$ has diameter smaller than $\frac 1i$, 
such that 
 $B_i \subset \text{Inter}( B_{i-1}\bigcap f^{-n_i}(B_{i-1}) )$. 
Then, we can choose some $n_{k+1}> n_k$ such that 
$B_k \bigcap f^{-n_{k+1}}(B_k)\neq \emptyset$, 
and some closed
disk $B_{k+1}$ with radius smaller than $\frac {1}{k+1}$, such that 
$B_{k+1}\subset  \text{Inter}(B_k \bigcap f^{-n_{k+1}}(B_k))$. 

Now for all $k\geq 1$, $B_k$ consists of points that will return to $B_{k-1}$ at time $n_k$. Now 
$\bigcap_{k\geq 1}B_k$ is a singleton, say, $\{x^\ast\}$. It follows that 
$f^{n_k}(x^\ast)\to x^\ast.$ 
\end{proof}

Denote by $\Bbb T^2$ the 
two-dimensional "flat" torus, 
whose universal covering space is $\Bbb R^2$, 
and 
let $\pi :\Bbb R^2 \to \Bbb T^2$ be the natural projection.
Let $\text{Homeo}_0(\Bbb T^2)$ denote
the set of homeomorphisms of $\Bbb T^2$ 
homotopic to the identity.
Then, denote by
$\text{Homeo}_{0,\text{nw}}(\Bbb T^2)$ and $\text{Homeo}_{0,\text{Leb}}(\Bbb T^2),$ 
the set of non-wandering and area-preserving 
homeomorphisms, respectively. Note that both are subsets of 
$\text{Homeo}_0(\Bbb T^2)$.
We also denote by 
$\widetilde {\text{Homeo}}_0(\Bbb T^2)$ 
(respectively, 
$\widetilde{\text{Homeo}}_{0,nw}(\Bbb T^2)$) the set of 
lifts of homeomorphisms 
from 
$\text{Homeo}_0(\Bbb T^2)$ 
(respectively, $\text{Homeo}_{0,nw}(\Bbb T^2)$) 
to the plane. 
 Similarly, for $r\geq1$, or $r=\infty$,
denote by 
$\text{Diff}_{r,\text{nw}}(\Bbb T^2)$ (respectively $\text{Diff}_{r,\text{Leb}}(\Bbb T^2)$)
the set of $C^r$ diffeomorphisms of $\Bbb T^2$, which are non-wandering
(respectively, area-preserving) and homotopic to the identity. Also, 
the sets of their lifts are denoted by $\widetilde{\text{Diff}}_{r,\text{nw}}(\Bbb T^2)$
and $\widetilde{\text{Diff}}_{r,\text{Leb}} (\Bbb T^2)$, respectively. 

 Usually, the choice of a lift 
 is not relevant for our purposes, 
 as can be seen in the next subsection.

We say a subset $K\subset \Bbb T^2$ is 
\textit{essential}
if there exists a non-trivial 
homotopy class, such that for any representative loop $\beta$ of it, 
$\beta\cap K\neq \emptyset$. A subset $K$ is called \textit{inessential}
if it is contained in a topological disk in $\Bbb T^2$. In this case, the complement 
of it is called \textit{fully essential}. We will need the following result. 
For more details about the above notations, see ~\cite{KoroleCalTal} and ~\cite{Strictly_Toral}.

\begin{lemma}\label{Bounded Invariant Disks}
[Theorem 6 in ~\cite{KoroleCalTal}]
Let $f\in \text{Homeo}_{0,\text{nw}}(\Bbb T^2)$ and suppose that the set of fixed points
 is inessential. Then there exists $M>0$, such that, 
for any $f$-invariant topological open disk $D$,
each connected component of $\pi^{-1}(D)$ has diameter bounded from above by $M$.
\end{lemma}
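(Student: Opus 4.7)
The plan is to argue by contradiction: suppose there is a sequence of $f$-invariant topological disks $D_n \subset \Bbb T^2$ with components $\widetilde{D}_n$ of $\pi^{-1}(D_n)$ satisfying $\operatorname{diam}(\widetilde{D}_n) \to \infty$. Fix a lift $\widetilde f$ of $f$. Since $D_n$ is inessential and $f(D_n) = D_n$, there is a unique integer vector $v_n$ with $\widetilde{f}(\widetilde{D}_n) = \widetilde{D}_n + v_n$, so that the adjusted lift $\widetilde{f} - v_n$ preserves $\widetilde{D}_n$. Because $D_n$ is simply connected, $\pi|_{\widetilde{D}_n}$ is a homeomorphism onto $D_n$, and $\widetilde{D}_n$ is itself a planar topological disk.

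I would first produce a fixed point of $\widetilde{f} - v_n$ inside $\widetilde{D}_n$. By Lemma~\ref{dense} the $f$-recurrent points are dense in $\Bbb T^2$, hence in $D_n$, so $f|_{D_n}$ is non-wandering; transferring through $\pi|_{\widetilde{D}_n}$, the restriction $(\widetilde f - v_n)|_{\widetilde{D}_n}$ is a non-wandering orientation-preserving homeomorphism of a planar open disk. Brouwer's plane translation theorem rules out fixed-point-free orientation-preserving non-wandering planar homeomorphisms, yielding $\widetilde{x}_n \in \widetilde{D}_n$ with $\widetilde{f}(\widetilde{x}_n) = \widetilde{x}_n + v_n$. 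Then $x_n := \pi(\widetilde{x}_n) \in \mathrm{Fix}(f)$ and $v_n \in \rho(\widetilde f) \cap \Bbb Z^2$. Since $\rho(\widetilde f)$ is compact, this intersection is finite; passing to a subsequence we may assume $v_n \equiv v$ for all $n$, and replacing $\widetilde f$ by $\widetilde f - v$ we reduce to the case $\widetilde{f}(\widetilde{D}_n) = \widetilde{D}_n$ for every $n$.

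Next, I would invoke the inessentiality of $\mathrm{Fix}(f)$. It lies in a topological disk $U \subset \Bbb T^2$, and each component of $\pi^{-1}(U)$ has diameter bounded by some $L_0$. Translating each $\widetilde{D}_n$ by a suitable integer vector $w_n$ (such translations commute with $\widetilde f$), we arrange that $\widetilde{x}_n + w_n$ lies in a fixed compact region, while the translates $\widetilde{D}_n + w_n$ remain $\widetilde f$-invariant with diameters tending to infinity. By Fell compactness of the space of closed subsets, on a further subsequence the closures $\overline{\widetilde{D}_n + w_n}$ converge locally in Hausdorff distance to a closed connected $\widetilde f$-invariant unbounded set $K \subset \Bbb R^2$. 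The projection $\pi(K)$ is an $f$-invariant continuum in $\Bbb T^2$; moreover, it must be essential, since if $\pi(K)$ were contained in a topological disk $U' \subset \Bbb T^2$, then $K$, being connected, would lie in a single component of $\pi^{-1}(U')$ and hence be bounded, a contradiction.

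The main obstacle is the final step: extracting from the essential $f$-invariant continuum $\pi(K)$ a fixed point of $f$ outside the disk $U$, which would contradict the inessentiality of $\mathrm{Fix}(f)$. This is where delicate planar-separation tools (including Lemma~\ref{Newman}) combine with equivariant Brouwer theory: the essential continuum $\pi(K)$ separates $\Bbb T^2$ in a way incompatible with having all $f$-fixed points confined to a single topological disk, and the precise deck-translate bookkeeping needed to isolate a new fixed point outside $U$ is the heart of the Koropecki--Le~Calvez--Tal argument.
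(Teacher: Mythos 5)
The paper does not actually prove Lemma~\ref{Bounded Invariant Disks}; it quotes it as Theorem~6 of \cite{KoroleCalTal}, so there is no in-paper proof to compare against, and your sketch must stand on its own as a reconstruction of the cited result.

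The setup is sensible. Arguing by contradiction, normalising the translation constants $v_n$ by finiteness of $\rho(\widetilde f)\cap\Bbb Z^2$, and producing a fixed point $\widetilde x_n\in\widetilde D_n$ of $\widetilde f-v_n$ (using that $(\widetilde f-v_n)|_{\widetilde D_n}$ is a non-wandering orientation-preserving homeomorphism of a topological plane, hence has a fixed point by the Brouwer plane translation theorem) are all correct steps. Two gaps remain, however.

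The smaller gap: a local Hausdorff limit of connected closed sets need not be connected, yet you use connectedness of $K$ both in calling $\pi(K)$ a continuum and in the essentiality argument. What is actually needed is that the component of $K$ through $\lim(\widetilde x_n+w_n)$ is unbounded. This is true, and can be obtained by a boundary-bumping argument (for each radius $R$, the component of $\overline{\widetilde D_n+w_n}\cap\overline{B_R}$ through $\widetilde x_n+w_n$ meets $\partial B_R$, and these compact connected sets converge to subcontinua of $K$ through the base point of diameter at least $R$), but that argument is not in your sketch and the statement is not automatic.

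The larger gap is decisive: you stop precisely where the content of the lemma lies. Exhibiting an essential $\widetilde f$-invariant closed set passing through one fixed point of $\widetilde f$ does not by itself contradict the inessentiality of $\text{Fix}(f)$ --- an essential invariant continuum can carry no fixed points whatsoever (the closure of a totally irrational flow line is an example). The whole difficulty is in converting the essential invariant object into a new fixed point outside $U$, or extracting a contradiction some other way, and you explicitly defer exactly that step, calling it "the heart of the Koropecki--Le~Calvez--Tal argument" and describing what tools might enter rather than carrying it out. It is also not clear that a Hausdorff-limit/compactness extraction is the right vehicle at all; the cited reference reasons about a single invariant open disk and its boundary (prime-end rotation numbers, separation properties of the boundary continuum), not about limits of a degenerating sequence of disks. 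As it stands, what you have is a plausible opening move, not a proof.
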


 \subsection{Misiurewicz-Ziemian Rotation Set} \hfill\\
 
Consider $\widetilde{f} \in \widetilde{\text{Homeo}}_0 (\Bbb T^2)$.
The foundations of the rotation theory in the torus were mainly developed by Misiurewicz and Ziemian in~\cite{MZ}. There, the following notion of a rotation set $\rho(\widetilde f)$ appears:
\begin{equation}\label{Misiurewicz_ziemann}
\rho(\widetilde f): =\{v =\lim_{k\to \infty}\frac{1}{n_k}(\widetilde f^{n_k}(\widetilde {z}_k)-\widetilde z_k),  \big| n_k\to \infty, \widetilde z_k\in \Bbb R^2, \text{whenever the limit exists}\}.
\end{equation}
\begin{Remark} 
For any $f \in \text{Homeo}_0(\Bbb T^2)$, and any $f$-invariant compact subset $K \subset \Bbb T^2$, 
$\rho(\widetilde f, K)$ is the rotation set of the map restricted to the 
invariant set $K$, which is defined similarly as in  
(\ref{Misiurewicz_ziemann}),
where the only difference is that, the points $\widetilde z_k$ in 
the expression are only allowed to be 
chosen in $\pi^{-1}(K)$. 
\end{Remark}

One can also define the point-wise rotation vector as follows. For any $z\in \Bbb T^2$,
\begin{equation}
\rho(\widetilde f,z): =\lim_{n\to \infty} \frac{1}{n} (\widetilde f^n(\widetilde z)-\widetilde z), \text{ when the limit exists}.
\end{equation}

Another important definition is as follows. 
Consider any $f$-invariant Borel probability measure $\mu$, 
and denote by $\rho_\mu(\widetilde f)=\int_{\Bbb T^2} \big( \widetilde f(\widetilde x)-\widetilde x \big) d\mu(x)$ 
the average rotation vector of the measure $\mu$ 
(note that 
the integrand in this expression is in fact a function  on $\Bbb T^2$). 
Define 
\begin{equation}
\rho_{\text{meas}}(\widetilde f):= 
\{\rho_\mu(\widetilde f) \big | \mu \text{ is a $f$-invariant Borel probability measure} \}.
\end{equation}
The following result gathers many important properties of these notions:

\begin{theorem}\label{rotationset}[See~\cite{MZ} and~\cite{MZ2}] 
For any $\widetilde f\in \widetilde{\text{Homeo}}_0(\Bbb T^2)$, 
$\rho_{\text{meas}}(\widetilde f)$ equals $\rho(\widetilde f)$, 
which is a compact and convex subset of $\Bbb R^2$. Moreover, every extremal point 
of the rotation set can be realized as the average rotation vector of some ergodic 
measure $\mu$. 
 \end{theorem}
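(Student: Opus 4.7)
The plan is to unify all three notions through the \emph{displacement cocycle} $\phi\colon \Bbb T^2\to \Bbb R^2$ defined on lifts by $\phi(x)=\widetilde f(\widetilde x)-\widetilde x$; this descends to a continuous well-defined map on the torus because any two lifts of $x$ differ by an integer vector and $\widetilde f$ commutes with integer translations. A telescoping computation gives $\widetilde f^n(\widetilde x)-\widetilde x=\sum_{i=0}^{n-1}\phi(f^i(x))$, so every element used in the definition of $\rho(\widetilde f)$ is a Birkhoff average of $\phi$ along some orbit segment. Since $\phi$ is bounded, $\rho(\widetilde f)$ lies in a compact subset of $\Bbb R^2$, and closedness follows from a standard diagonal argument on the defining sequences, giving compactness essentially for free.

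For $\rho_{\text{meas}}(\widetilde f)\subseteq \rho(\widetilde f)$, I would appeal to Birkhoff's ergodic theorem: when $\mu$ is ergodic, a full-measure set of $x$ realizes $\frac 1n\sum_{i=0}^{n-1}\phi(f^i(x))\to \int \phi\,d\mu=\rho_\mu(\widetilde f)$, and the general case follows by ergodic decomposition together with the affinity of $\mu\mapsto \rho_\mu$. The reverse inclusion is the heart of the matter. Given $v=\lim_k \frac{1}{n_k}(\widetilde f^{n_k}(\widetilde z_k)-\widetilde z_k)$, form the empirical measures $\mu_k=\frac{1}{n_k}\sum_{i=0}^{n_k-1}\delta_{f^i(\pi(\widetilde z_k))}$ on $\Bbb T^2$. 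By weak-$\ast$ compactness, some subsequence converges to an $f$-invariant Borel probability measure $\mu$, and since $\phi$ is continuous one obtains
\begin{equation*}
\rho_\mu(\widetilde f)=\int \phi\,d\mu=\lim_k \int\phi\,d\mu_k=\lim_k \frac{1}{n_k}\sum_{i=0}^{n_k-1}\phi(f^i(\pi(\widetilde z_k)))=v.
\end{equation*}
This simultaneously delivers $\rho(\widetilde f)\subseteq \rho_{\text{meas}}(\widetilde f)$ and, combined with the affine dependence of $\rho_\mu$ on $\mu$ and convexity of the space of invariant probability measures, the convexity of $\rho(\widetilde f)$.

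The remaining statement on extremal points is then a clean Choquet/ergodic-decomposition argument: pick any invariant $\mu$ with $\rho_\mu(\widetilde f)=v$ for an extremal $v$, write the ergodic decomposition $\mu=\int \mu_\xi\,d\nu(\xi)$, and use $v=\int \rho_{\mu_\xi}(\widetilde f)\,d\nu(\xi)$ together with extremality to force $\rho_{\mu_\xi}(\widetilde f)=v$ for $\nu$-almost every $\xi$, yielding the desired ergodic measure. The only delicate step in the whole argument is the interchange of limits used to identify the empirical limit $v$ with an integral against the weak-$\ast$ limit measure, but this is just continuity of $\phi$ combined with weak-$\ast$ convergence, so no genuine obstacle is expected: the theorem is essentially the original Misiurewicz--Ziemian computation built on this cocycle and empirical-measure machinery.
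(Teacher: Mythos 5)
The paper does not prove this theorem; it states it as a classical fact and cites Misiurewicz--Ziemian, so your proposal has to stand on its own against the argument in \cite{MZ}.

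Most of your sketch is the right machinery and is correct: the displacement cocycle $\phi$, the telescoping identity, boundedness and closedness of $\rho(\widetilde f)$, the empirical-measure plus weak-$\ast$ compactness argument for $\rho(\widetilde f)\subseteq\rho_{\text{meas}}(\widetilde f)$, and (once convexity is in hand) the Choquet argument for realizing extremal points by ergodic measures are all sound and essentially the same ingredients used in \cite{MZ}.

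The genuine gap is that your argument for convexity of $\rho(\widetilde f)$ is circular. You derive $\rho_{\text{meas}}(\widetilde f)\subseteq\rho(\widetilde f)$ for \emph{general} invariant $\mu$ by writing $\rho_\mu=\int\rho_{\mu_\xi}\,d\nu(\xi)$ and noting each $\rho_{\mu_\xi}\in\rho(\widetilde f)$ via Birkhoff --- but concluding that the barycenter of a family of points of $\rho(\widetilde f)$ lies again in $\rho(\widetilde f)$ is exactly the assertion that $\rho(\widetilde f)$ is convex. You then want to deduce convexity of $\rho(\widetilde f)$ from the equality $\rho(\widetilde f)=\rho_{\text{meas}}(\widetilde f)$ and the fact that $\rho_{\text{meas}}(\widetilde f)$ is an affine image of a convex set. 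So convexity is both an input and an output of the same step. In fact, what the weak-$\ast$ argument honestly gives is $\rho(\widetilde f)\subseteq\rho_{\text{meas}}(\widetilde f)$, and Birkhoff gives $\{\rho_\mu:\mu\text{ ergodic}\}\subseteq\rho(\widetilde f)$; these two facts sandwich $\rho(\widetilde f)$ between the ergodic rotation vectors and their closed convex hull, but do not force it to be convex. (Note also that the measure-valued version of the empirical argument is not enough either: not every invariant measure is a weak-$\ast$ limit of orbit-segment empirical measures -- take $f=\text{id}$ -- so you cannot upgrade the inclusion to an equality by that route.) The missing ingredient is precisely the central lemma of \cite{MZ}: a direct dynamical/combinatorial proof that $\rho(\widetilde f)$ is convex, obtained by analysing the sets $D_n=\{\frac 1n(\widetilde f^n(\widetilde x)-\widetilde x)\}$ and their behaviour under the relation $\frac 1{n+m}(\widetilde f^{n+m}\widetilde x-\widetilde x)=\frac{n}{n+m}\cdot\frac1n(\widetilde f^n\widetilde x-\widetilde x)+\frac{m}{n+m}\cdot\frac1m(\widetilde f^{m}(\widetilde f^n\widetilde x)-\widetilde f^n\widetilde x)$ together with their uniform Hausdorff approximation of $\rho(\widetilde f)$. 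Once that is established, your remaining reasoning goes through and the theorem follows.
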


\subsection{Bounded Deviations}\hfill\\
For any non-trivial vector $w \in \Bbb R^2$, 
denote by $\text{pr}_w$ the projection of a vector 
along the $w$ direction.
\begin{equation}
\text{pr}_w: \Bbb R^2 \to \Bbb R, r \mapsto \langle r, w\rangle.
\end{equation} 
Next, we introduce the important notation of bounded deviations. 

\begin{definition}\label{definition_deviation}
Fix a non-trivial vector $w \in \Bbb R^2$. 
We say that $\widetilde f$ has bounded deviation along direction $w$ (from its rotation set $\rho(\widetilde f)$), 
if there exists $M>0$, such that for any $n\geq 0$ and any $\widetilde x\in \Bbb R^2$, 
\begin{equation}\label{one_direction}
\text{pr}_w(\widetilde f^n(\widetilde x)-\widetilde x-n \rho(\widetilde f))  \leq M.
\end{equation}
\end{definition}
\begin{Remark}
Note that with respect to this definition, 
having bounded deviation along $w$ and $-w$ are two different conditions. 
 \end{Remark}

The following statement essentially follows from Gottschalk-Hedlund theorem, 
see also ~\cite{Linearization} for a somewhat more elementary proof.

\begin{lemma}[Proposition A in~\cite{Linearization}]\label{jager}
Let $f \in \text{Homeo}_0(\Bbb T^2)$ preserve a minimal 
set $K \subset \Bbb T^2$. Suppose 
$\rho(\widetilde f, K) = \{ (\alpha,\beta)\}$ where $(\alpha,\beta)$
is totally irrational, and 
$\widetilde f\big|_{\pi^{-1}(K)}$ 
has bounded deviation along every direction. 
Then, there exists a continuous surjective map
$\phi:K\to \Bbb T^2$,  homotopic to 
the inclusion, satisfying that
\begin{equation}
\phi\circ f\big|_K=R_{(\alpha,\beta)}\circ \phi,
\end{equation}  
where $R_{(\alpha,\beta)}$
is the 
rigid rotation on $\Bbb T^2$. 
\end{lemma}

The following result establishes 
the bounded deviation property in 
the perpendicular direction when the rotation set is 
the special one we are interested in. 

\begin{theorem}[\cite{Guilherme_Thesis}]\label{Fabio_Bounded}
Suppose $\widetilde f
\in \widetilde{\text{Homeo}}_0(\Bbb T^2)$,
and $\rho(\widetilde f)$ is the segment from 
$(0,0)$ to a totally irrational point 
$(\alpha,\beta)$. 
Then $\widetilde f$ has bounded deviation 
along the perpendicular directions
$(-\beta,\alpha)$ and $(\beta,-\alpha)$.
\end{theorem}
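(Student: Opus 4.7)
Set $v=(\alpha,\beta)$ and $w=(-\beta,\alpha)\perp v$, and define the cocycle $\varphi:\Bbb T^2\to\Bbb R$ by $\varphi(x)=\text{pr}_w(\widetilde f(\widetilde x)-\widetilde x)$, which is well-defined because $\widetilde f$ commutes with integer translations and $\text{pr}_w$ is linear; its iterated sums satisfy $\varphi_n(x)=\text{pr}_w(\widetilde f^n(\widetilde x)-\widetilde x)$. The hypothesis $\rho(\widetilde f)\subset\Bbb R v$, combined with Theorem~\ref{rotationset}, gives $\int\varphi\,d\mu=\text{pr}_w(\rho_\mu(\widetilde f))=0$ for every $f$-invariant Borel probability measure $\mu$. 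Because the hypotheses are preserved under $\widetilde f\leftrightarrow\widetilde f^{-1}$ and $w\leftrightarrow -w$, it suffices to exhibit a uniform upper bound $\varphi_n(x)\le M$ valid for all $n\ge 0$ and $x\in\Bbb T^2$.

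Suppose for contradiction that $D_n^+:=\sup_{x\in\Bbb T^2}\varphi_n(x)\to\infty$ along some subsequence $n_k$. The Misiurewicz--Ziemian uniform convergence of $\tfrac{1}{n}(\widetilde f^n(\widetilde x)-\widetilde x)$ to $\rho(\widetilde f)$ already forces $D_n^+=o(n)$, so the blow-up is necessarily sublinear. The strategy is to turn this sublinear-yet-unbounded drift into an $f$-invariant probability measure $\mu_*$ with $\int\varphi\,d\mu_*>0$, contradicting the vanishing mean established above. Choose $x_{n_k}\in\Bbb T^2$ achieving $\varphi_{n_k}(x_{n_k})\ge D_{n_k}^+-1$. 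In parallel, invoke Theorem~\ref{rotationset} at the extremal point $(0,0)\in\rho(\widetilde f)$ to obtain an ergodic measure $\mu_0$ with $\rho_{\mu_0}(\widetilde f)=(0,0)$; Atkinson's lemma (Theorem~\ref{Atkinson}) applied coordinate-wise to the full displacement cocycle supplies, for $\mu_0$-a.e.\ $y$, arbitrarily long return times $m$ with $f^m(y)$ arbitrarily close to $y$ on $\Bbb T^2$ and $\widetilde f^m(\widetilde y)-\widetilde y$ arbitrarily close to $(0,0)$ in $\Bbb R^2$.

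The crux of the argument is then to splice the drifting segment $\{f^j(x_{n_k})\}_{j=0}^{n_k}$ with the Atkinson-supplied short returns for $\mu_0$-generic points, assembling $\varepsilon$-pseudo-periodic orbits (in the sense of Section~\ref{preliminary}) whose lifts retain total $\text{pr}_w$-displacement of order $D_{n_k}^+$ but whose periods are controlled. A weak-$\ast$ limit of the associated empirical measures, taken along a subsequence for which the ratio of displacement to period is bounded away from zero, then produces the desired $\mu_*$. The main obstacle, in my view, lies precisely in this splicing step: the naive empirical measure on the drifting orbit has $\int\varphi\,d\mu_{n_k}=D_{n_k}^+/n_k\to 0$, so the construction must use the total irrationality of $(\alpha,\beta)$ in an essential way. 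Concretely, total irrationality ensures that the orbit of any $\mu_0$-generic point is dense in $\Bbb T^2$, so one can attach a recurrent return of a $\mu_0$-typical point to the drifting endpoint with $C^0$-error at most $\varepsilon$ and integer-translation-free lift-error; one then reselects start points along the drifting orbit (for instance at fresh high-water marks of $j\mapsto\varphi_j(x_{n_k})$) so that the free portion of the pseudo-orbit drifts in $w$ with rate bounded below, which forces $\mu_*$ to have strictly positive $\varphi$-mean. Once the upper bound $\varphi_n\le M$ is established, the symmetric lower bound follows from the $\widetilde f\leftrightarrow\widetilde f^{-1}$ reduction.
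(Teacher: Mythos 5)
The paper does not prove Theorem~\ref{Fabio_Bounded}; it cites it directly from \cite{Guilherme_Thesis}, so there is no in-text proof to compare against. Judged on its own terms, your sketch has a genuine gap, and you have correctly identified \emph{where} it is, but the fix you propose cannot close it. The strategy is to extract from the supposedly unbounded deviation an $f$-invariant probability measure $\mu_*$ with $\int\varphi\,d\mu_*>0$, contradicting $\int\varphi\,d\mu=0$ for all invariant $\mu$. This is structurally impossible: any weak-$\ast$ limit of empirical measures along genuine orbit segments, or along $\varepsilon_k$-pseudo-orbits with $\varepsilon_k\to 0$ and lengths $\to\infty$, is automatically $f$-invariant, and therefore has $\varphi$-mean zero. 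The phenomenon you are trying to exclude --- \emph{sublinear but unbounded} deviation --- is exactly the one that invariant-measure averages cannot see. Your proposed remedy, reselecting starting times at ``fresh high-water marks'' of $j\mapsto\varphi_j(x_{n_k})$ so as to get a segment with drift rate bounded below, fails for a quantitative reason: once you know $D_n^+=o(n)$, no family of subsegments with lengths $\ell_k\to\infty$ can have $\varphi$-drift $\ge c\,\ell_k$ for a fixed $c>0$, since that would itself violate $D_{\ell_k}^+=o(\ell_k)$. And segments of \emph{bounded} length do not produce invariant measures in the limit, so there is no contradiction to be had there either.

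What is missing is a \emph{topological} ingredient. The argument has to turn unbounded perpendicular deviation into a dynamical object that is forbidden by the shape of $\rho(\widetilde f)$ --- typically a periodic orbit, a topological horseshoe, or an essential annular obstruction with a rotation vector off the line through $(0,0)$ and $(\alpha,\beta)$ --- via a forcing/realization theorem rather than an averaging argument. This is the mechanism used throughout the literature the theorem comes from (realization of rational vectors, Brouwer-line constructions, crossing of fundamental domains), and total irrationality of $(\alpha,\beta)$ enters precisely to rule out periodic orbits off the line. Your reduction by symmetry to a one-sided bound, and your observation that every invariant measure has $\int\varphi\,d\mu=0$, are both correct and would be useful preliminaries, but as written the proof does not go through, and no refinement of the empirical-measure step can make it go through.
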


\subsection{Some Fundamental Tools in Topological Dynamics}\hfill\\
Let $f: \Bbb R^2 \to \Bbb R^2$ denote an orientation-preserving
 homeomorphism. 
 A properly
 embedded oriented line $\gamma:\Bbb R\to \Bbb R^2$
 is called a \textit{Brouwer line}, if $f(\gamma(\Bbb R))$ and $f^{-1}(\gamma(\Bbb R))$ belong to different  
connected components of the complement of $\gamma(\Bbb R)$.
 We also abuse notation by writing 
 $\gamma=\gamma(\Bbb R).$ 
 We call these components 
 \emph{the right of $\gamma$} and \emph{the left of $\gamma$},
 and denote them by $\mathcal R(\gamma)$ and $\mathcal L(\gamma)$, 
 respectively. 
  
The following result is usually attributed to
 Brouwer (see also~\cite{Brown_newproof}), we 
refer to~[\cite{Franks_PB}, Proposition 1.3] 
for a very useful generalization. 
Here we state a weaker version, which is 
sufficient for our use.
\begin{lemma}\label{Brouwer}
Suppose $f:\Bbb R^2\to \Bbb R^2$ is an orientation-preserving homeomorphism. If 
there exists a topological open disk $U$, 
such that $f(U)\bigcap U =\emptyset$, and 
for some $k\geq 2$, $f^k(U)\bigcap U\neq \emptyset$,
then, if the fixed points are isolated,  $f$ admits a fixed point with
positive topological index.
\end{lemma}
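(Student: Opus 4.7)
The plan is to argue by contradiction in two stages, paralleling the classical strategy behind Franks' index-theoretic strengthening of the Brouwer plane translation theorem.

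First I would establish that $f$ must have a fixed point at all. Suppose not. By the classical Brouwer plane translation theorem, every point of $\mathbb{R}^2$ lies on a translation arc whose $f$-iterates are pairwise disjoint; as a consequence, a free topological open disk $U$ (meaning $f(U)\cap U=\emptyset$) cannot be recurrent, i.e. $f^n(U)\cap U=\emptyset$ for every $n\ge 1$. This contradicts the hypothesis $f^k(U)\cap U\neq\emptyset$, so $\text{Fix}(f)$ is nonempty.

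To upgrade this to a fixed point of \emph{positive} index, I would construct a simple closed curve enclosing the recurrent behaviour and then apply a Lefschetz-type index computation. Choose $k$ minimal and pick $x\in U$ with $f^k(x)\in U$, joined by an arc $\alpha\subset U$. Away from $\text{Fix}(f)$, one uses Brown's free translation arc technique to obtain an arc $\gamma$ from $x$ to $f(x)$ whose iterates $\gamma, f(\gamma),\ldots,f^{k-1}(\gamma)$ are pairwise disjoint; concatenating them with $\alpha^{-1}$ gives a closed curve. After passing to an innermost embedded subloop $\Gamma$, one applies the Lefschetz formula to the open disk $D$ bounded by $\Gamma$. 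The construction ensures that the sum of fixed-point indices inside $D$ equals $+1$, which forces at least one enclosed fixed point to have positive index.

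The main obstacle is controlling the possible self-intersections of the concatenation $\alpha \cup \gamma\cup f(\gamma)\cup\cdots\cup f^{k-1}(\gamma)$: a priori, the arcs $f^i(\gamma)$ may cross each other or $\alpha$ in uncontrolled ways, so the above closed curve need not be simple. Resolving this via innermost-loop extraction and verifying both that the resulting disk $D$ genuinely encloses part of $\text{Fix}(f)$ and that the total index on $D\cap\text{Fix}(f)$ equals $+1$ is exactly the technical content of Proposition 1.3 in \cite{Franks_PB}. Orientation preservation enters crucially at this stage, as it is what converts the $+1$ winding contribution produced by the recurrent structure into a positive index sum rather than its opposite.
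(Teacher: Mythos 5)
Your approach matches the paper's: the paper gives no proof of this lemma, merely noting it is "usually attributed to Brouwer (see also \cite{Brown})" and referring to \cite{Franks_PB}, Proposition 1.3, of which the statement here is the special case of a single free disk forming a periodic disk chain. Your two-stage sketch --- Brouwer's plane translation theorem to rule out $\text{Fix}(f)=\emptyset$, then the disk-chain/index construction --- is exactly the argument behind that citation, and you correctly identify that the real technical content (producing a genuinely embedded loop from the concatenated translation arcs and verifying the enclosed index sum is $+1$) is precisely what Franks' Proposition 1.3 supplies, which is the paper's entire justification for the lemma.
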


\begin{definition} Let
$f: (U,p)\to (V,p)$ 
denote a local homeomorphism, 
where $U$ and $V$ are two open neighbourhoods of 
an isolated fixed point $p \in \Bbb R^2$. Choose 
a small disk $D\subset U$, whose boundary is $\beta=\partial D$. Define 
$g:\beta \to S^1$, such that $g(x)=\frac{x-f(x)}{\|x-f(x)\|}$. 
The topological index of $f$ at the point $p$ is
 defined as the degree of the map $g$, denoted as $\text{Index}_f(p)$.
\end{definition}
The following 
is a consequence of the classical result usually referred to as 
Lefschetz fixed-point formula. 
(See for example Theorem 8.6.2 in \cite{Katokbook}).

\begin{lemma}\label{Lefschetz} 
Let $f \in \text{Homeo}_+^0(\Bbb T^2)$
and assume all fixed points are isolated. 
Then 
\begin{equation}
\sum_{p\in \text{Fix}(f)} \text{Index}_f(p)=0.
\end{equation}
\end{lemma}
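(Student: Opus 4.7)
The plan is to derive this from the classical Lefschetz fixed-point theorem in the form cited from Katok and Hasselblatt, namely that for a continuous self-map $f$ of a compact polyhedron with only isolated fixed points,
\begin{equation}
\sum_{p \in \text{Fix}(f)} \text{Index}_f(p) \;=\; L(f),
\end{equation}
where $L(f) = \sum_{i} (-1)^i \text{tr}\bigl( f_* : H_i(X;\mathbb{Q}) \to H_i(X;\mathbb{Q}) \bigr)$ is the Lefschetz number. For this formula to apply one needs the local index defined in the excerpt to agree with the standard algebraic fixed-point index on a surface; I would quickly remark that for an orientation-preserving planar homeomorphism the degree of $x \mapsto (x - f(x))/\|x - f(x)\|$ around an isolated fixed point is precisely the local Lefschetz index, so the two notions coincide.

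Next I would compute $L(f)$ under the hypothesis that $f$ is homotopic to the identity on $\mathbb{T}^2$. Since $L$ is a homotopy invariant, $L(f) = L(\text{id}_{\mathbb{T}^2})$, which by definition equals the Euler characteristic $\chi(\mathbb{T}^2)$. Because $H_0(\mathbb{T}^2) \cong H_2(\mathbb{T}^2) \cong \mathbb{Q}$ and $H_1(\mathbb{T}^2) \cong \mathbb{Q}^2$, one gets
\begin{equation}
L(\text{id}_{\mathbb{T}^2}) \;=\; 1 - 2 + 1 \;=\; 0.
\end{equation}
Combining this with the fixed-point formula yields the claimed identity $\sum_{p} \text{Index}_f(p) = 0$.

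There is no real obstacle here, only a bookkeeping point: one must make sure the hypotheses of the Lefschetz theorem from the reference are met. Compactness of $\mathbb{T}^2$ is immediate, the polyhedron (or ANR) structure is standard, and the assumption that $\text{Fix}(f)$ consists of isolated points is built into the statement. The only thing worth spelling out carefully, if desired, is the identification of the winding-number definition of $\text{Index}_f(p)$ given earlier with the algebraic local index, which for orientation-preserving homeomorphisms of $\mathbb{R}^2$ is classical and can be cited without proof.
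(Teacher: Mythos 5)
Your argument is correct and is precisely the one the paper implicitly invokes: the paper gives no proof of Lemma~\ref{Lefschetz}, merely citing the classical Lefschetz fixed-point formula (Theorem 8.6.2 in \cite{Katokbook}), and your derivation --- identifying the winding-number index with the local algebraic index, applying the Lefschetz formula, and using homotopy invariance to reduce $L(f)$ to $\chi(\Bbb T^2)=1-2+1=0$ --- is exactly the standard route that citation stands for.
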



\subsection{Generic Conditions}\hfill\\

Recall 
$\text{Diff}_{r,\text{nw}}(\Bbb T^2)$ is 
the set of 
non-wandering $C^r$ 
diffeomorphisms of $\Bbb T^2$, which are homotopic to the
identity, and $\text{Diff}_{r,\text{Leb}}(\Bbb T^2)\subset \text{Diff}_{r,\text{nw}}(\Bbb T^2)$
is the subset of area-preserving ones. 
Below, we say $p$ is a 
periodic \textit{saddle-like} point for $f$ if there is $n>0$, 
such that 
$p$ is a $f^n$-fixed point, 
and with respect to $f^n$, 
the dynamics near $p$ is obtained by gluing a finite number of topological saddle-sectors, see \cite{Dumortier}. 
As usual, 
we denote by $W^u(p)$ (respectively, $W^s(p)$)
the union of $p$ and all the unstable branches at $p$ (respectively, the
stable branches at $p$). 

\begin{definition}\label{Gr_definition}
For any $r\geq 1$ or $r=\infty$, 
define 
$\mathcal G^r \subset \text{Diff}_{r,\text{nw}}(\Bbb T^2)$ 
as the subset of diffeomorphisms $f$
satisfying the following two conditions: 
\begin{enumerate}
\item  if $p \in \Bbb T^2$ is an $n$-periodic point, then  
$Df^n(p)$ does not have $1$ as an eigenvalue. 
\item  $f$ does not have saddle connections.
\end{enumerate}
\end{definition}

\begin{Remark}\label{these_conditions_are_generic}
By Theorem 3 (iii) and Theorem 9 of~\cite{Robinson},
for all $r \geq 1$, the set
$\mathcal G^r \bigcap \text{Diff}_{r,\text{Leb}}(\Bbb T^2)$
contains a residual subset of $\text{Diff}_{r,\text{Leb}}(\Bbb T^2)$.
Thus the above conditions are generic for area-preserving 
diffeomorphisms. 
\end{Remark}

\begin{definition}\label{topological_transverse}
Let $f:S\to S$ be a $C^1$ 
diffeomorphism on a closed surface $S$. Let $p,q$ be two 
periodic saddle-like points. 
We say that $W^u(p)$ and $W^s(q)$ intersect 
at a point $w$ in a topologically transverse way,  
if there exists an open disk $B=B(w,\delta)$
with some radius $\delta>0$ such that  
(denote by $\alpha,$ respectively $\beta$, 
the connected component of $W^u(p)\bigcap B,$ respectively $W^s(q)\bigcap B$, 
both containing the point $w$):

\begin{enumerate}
\item $B \backslash \alpha= B_1 \sqcup B_2$, which is a disjoint union.
\item $\beta\backslash \{w\}= \beta_1\sqcup \beta_2,$ which is a disjoint union and
$\beta_1 \subset B_1\bigcup \alpha$, $\beta_2 \subset B_2\bigcup \alpha$, 
with $\beta_1  \not \subset \alpha$ and $\beta_2\not\subset \alpha$. In other words, 
$\beta_1$ intersects $B_1$ and $\beta_2$ intersects $B_2$.
\end{enumerate}
\end{definition}

\begin{Remark}\label{saddle_like_case_noncontracible} The radius of the disk, 
$\delta>0,$ might have to be taken strictly away from $0$, because 
the connected component of $\alpha\bigcap\beta$ containing $w$ could be a non-trivial arc. 
\end{Remark}

The following lemma will be useful for obtaining non-contractible periodic orbits, i.e., 
those orbits with non-zero rotation vectors. 
\begin{lemma}\label{create_topological_horseshoe}[Lemma 0 in \cite{Salvador_Growth}]
Suppose 
$\widetilde f\in \widetilde{ \text{Diff}}_{r}(\Bbb T^2)$ has
 a hyperbolic periodic saddle point $\widetilde q$, and suppose 
$W^u(\widetilde q)$ and $W^s(\widetilde q)+(a,b)$ intersect in a
 topologically transverse way, for some integer vector $(a,b).$ 
Then there exists some integer $N>0$ such that the diffeomorphism 
$\widetilde g: = \widetilde f^{N}-(a,b)$ 
admits a fixed point $\widetilde p$. Thus, $\rho(\widetilde f, \pi(\widetilde p) )=(\frac{a}{N},\frac{b}{N})$.
\end{lemma}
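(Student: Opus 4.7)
The plan is to construct a topological horseshoe rectangle for some iterate $\widetilde f^N$ out of the heteroclinic orbit through $r$, and then extract a fixed point of $\widetilde g = \widetilde f^N - (a,b)$ from it. Because $\widetilde f$ commutes with $\Bbb Z^2$-translations, $\widetilde q+(a,b)$ is itself a hyperbolic saddle of $\widetilde f$, with $W^s(\widetilde q+(a,b))=W^s(\widetilde q)+(a,b)$, and $r$ is a heteroclinic point from $\widetilde q$ to $\widetilde q+(a,b)$; a fixed point $\widetilde p$ of $\widetilde g$ inside a suitable rectangle around $\widetilde q$ corresponds exactly to an $N$-periodic orbit of $f$ whose lift is translated by $(a,b)$ in $N$ iterates, giving rotation vector $(a/N,b/N)$.

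First I would use the stable manifold theorem at $\widetilde q$ and $\widetilde q+(a,b)$ to choose small rectangular neighborhoods $R$ around $\widetilde q$ and $R+(a,b)$ around $\widetilde q+(a,b)$, each with two ``stable sides'' transverse to $W^u_{\mathrm{loc}}$ and two ``unstable sides'' transverse to $W^s_{\mathrm{loc}}$. Since $\widetilde f^{-k}(r)\to\widetilde q$ along $W^u_{\mathrm{loc}}(\widetilde q)$ and $\widetilde f^{k}(r)\to \widetilde q+(a,b)$ along $W^s_{\mathrm{loc}}(\widetilde q+(a,b))$, for $N$ large the image $\widetilde f^N(R)$ is a long thin topological strip whose ``spine'' is a sub-arc of $W^u(\widetilde q)$ running from $\widetilde q$ through $r$ to a point close to $\widetilde q+(a,b)$. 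The crucial claim is that this strip crosses $R+(a,b)$ topologically from one stable side to the other; translating by $-(a,b)$, the strip $\widetilde g(R)$ then crosses $R$ in the same horseshoe manner. Granted this crossing, a standard Brouwer / Poincar\'e--Miranda argument applied to the displacement field $z\mapsto \widetilde g(z)-z$ on $R$ yields the fixed point $\widetilde p\in R$: on the two stable sides of $R$ the unstable component of the displacement has opposite signs (since $\widetilde g(R)$ exits $R$ through both unstable sides), while vertical contraction near the heteroclinic forces the stable component to vanish somewhere along the horizontal curve where the unstable component already does.

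The hardest step is the topological crossing claim. Under $C^1$-transversality at $r$, the classical $\lambda$-lemma would give $\widetilde f^N(R)\cap(R+(a,b))$ as a thin topological strip $C^1$-close to $W^u_{\mathrm{loc}}(\widetilde q+(a,b))$, making the crossing immediate; here only topological transversality is available, so the crossing must be extracted directly from the cross-configuration of $\alpha$ and $\beta$ supplied by Definition~\ref{topological_transverse}. The key point to exploit is that the two sub-arcs $\beta_1,\beta_2$, satisfying $\beta_1\subset B_1\cup\alpha$ and $\beta_2\subset B_2\cup\alpha$ with neither wholly contained in $\alpha$, genuinely lie on opposite sides of $\alpha$ at $r$, and this separation is preserved under forward iteration by $\widetilde f$ because $W^u(\widetilde q)$ and $W^s(\widetilde q+(a,b))$ are invariant and globally disjoint away from their heteroclinic intersection points. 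Combined with local expansion along $W^u_{\mathrm{loc}}(\widetilde q+(a,b))$ inside $R+(a,b)$, this forces $\widetilde f^N(R)$ to span $R+(a,b)$ between its two stable sides, which is exactly the crossing we need.
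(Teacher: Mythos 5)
The paper does not reprove this lemma; it cites it as Lemma~0 of \cite{Salvador_Growth}, so you are being judged against the standard horseshoe argument that the reference carries out. Your overall plan (recognize $r$ as a heteroclinic point from $\widetilde q$ to $\widetilde q+(a,b)$, iterate a rectangle near $\widetilde q$ until its image crosses a rectangle near $\widetilde q+(a,b)$, translate by $-(a,b)$, then extract a fixed point of $\widetilde g$ by a degree/Poincar\'e--Miranda argument) is exactly the right route, and you correctly isolate the crossing as the step that must be done by hand since only \emph{topological} transversality is available.

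However, as written the crossing step has a genuine gap. You argue that the two sub-arcs $\beta_1,\beta_2$ ``genuinely lie on opposite sides of $\alpha$,'' that this separation is preserved under iteration, and that ``local expansion along $W^u_{\mathrm{loc}}(\widetilde q+(a,b))$'' then forces $\widetilde f^N(R)$ to span $R+(a,b)$ between its two stable sides. None of this precludes the bad scenario: the connected sub-arc of $\widetilde f^k(\alpha_0)$ through $\widetilde f^k(r)$ inside $R+(a,b)$ still has its two ends on opposite sides of $W^s_{\mathrm{loc}}(\widetilde q+(a,b))$, but they could both exit $R+(a,b)$ through the \emph{unstable} sides (transverse to $W^s_{\mathrm{loc}}$) rather than through the stable sides, in which case there is no Markov crossing and the fixed-point argument collapses. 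Expansion alone does not rule this out, because the endpoints of the arc are not orbit points that you control; they are just where the arc leaves the block, and a priori the arc could keep bending to exit through $\{s=\pm\varepsilon\}$. What actually saves the argument is a stronger structural fact that you mention in passing but never use: $W^u(\widetilde q)$ is \emph{disjoint} from $W^u(\widetilde q+(a,b))$ (distinct unstable manifolds of a diffeomorphism never meet), and, after shrinking $\alpha_0$, $\alpha_0\setminus\{r\}$ can be taken disjoint from $W^s(\widetilde q+(a,b))$. Inside a hyperbolic block around $\widetilde q+(a,b)$ this traps each half of $\widetilde f^k(\alpha_0)$ strictly between $W^s_{\mathrm{loc}}$ and $W^u_{\mathrm{loc}}$, and an elementary escape estimate (the maximal $u$-coordinate of the trapped arc grows by at least the factor $\lambda>1$ each iteration as long as the arc has not reached the stable sides, while it is bounded by $\varepsilon$) forces the exit through the stable sides after finitely many iterations. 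That trapping-plus-escape argument, not bare expansion of the linearization, is what yields the spanning, and it needs to be spelled out. Additionally, the Poincar\'e--Miranda step is only gestured at: the sign condition you state (``the unstable component of the displacement has opposite signs on the two stable sides, since $\widetilde g(R)$ exits $R$ through both unstable sides'') is internally inconsistent, and the sign condition on the \emph{stable} component is not established at all; a cleaner finish is to note that a full horizontal crossing of $R$ by $\widetilde g(R)$ forces the fixed-point index of $\widetilde g$ in $R$ to be $\pm1$.
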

\begin{Remark}\label{saddle_like_points_horseshoe} 
Following exactly the same proof as in \cite{Salvador_Growth}, 
the conclusion is also true when the periodic point $\widetilde q$ has topological index $0$, 
and admits exactly one stable and one unstable branch, whose local dynamics is 
described in Figure 1. 
\end{Remark}
\subsection{A Broader Class of (Non-generic) Diffeomorphisms}\hfill\\
  
  The following definition appears as Definition 1.6 on page 12 of~\cite{Dumortier}. 
  Such a study is  based on the important work of Takens
  (See Theorem 4.6 in~\cite{Takens}). 
  Although all the theory in \cite{Dumortier} was stated  
for $C^\infty$ maps, for the results we consider below,  
  there is no substantial difference in the $C^r$ case.
  
    \begin{definition}\label{Lojasiewicz_type} Let $f: (U,p)\to (W,p)$ be 
  a local planar $C^r$-diffeomorphism with an isolated fixed point $p$. 
   Assume all eigenvalues of $Df(p)$ belong to the unit circle and 
   let $S$ be the semi-simple part of $Df(p)$ in its Jordan 
  normal form. Then up to a $C^r$-coordinate change, 
  there exists a formal $C^r$ vector field $X$, invariant under $S$,
   such that, the $r$-jet of $f$ and 
  the $r$-jet of $S \circ X_1$ coincide at $p$, where $X_1$ is the time-1 map of the formal flow generated by $X$. 
  We say $f$ is \textit{of Lojasiewicz type} at $p$, if the following condition holds:
  \begin{itemize}
  \item there exists an integer $k\leq r$ and constants $C,\delta>0$, 
  such that, for any $z$ satisfying $\|z-p\|\leq \delta,$ then
    \begin{equation}
  \|X(z)\| \geq C \|z-p\|^k.
  \end{equation}
  \end{itemize}
   \end{definition}
  
  The following result was essentially obtained in Section 2 
of ~\cite{Addas_Calvez}.
\begin{lemma}\label{section_two_lemma}
Assume $f\in \text{Diff}_{r,\text{nw}}(\Bbb T^2)$ has an isolated 
fixed point $p$ and the topological index of $f$ at $p$ is zero.
Also suppose that if both eigenvalues of $Df$
at $p$ lie in the unit circle, then $f$ is of 
Lojasiewicz type at $p.$
Then, 
there exists exactly one stable  
and one unstable branch at $p$. 
Moreover, 
the local dynamics can be precisely described, 
see Figure~\ref{the_local_situation}.
%
%
\end{lemma}

\begin{figure}[!ht]
\begin{center}
\includegraphics[width=5cm]{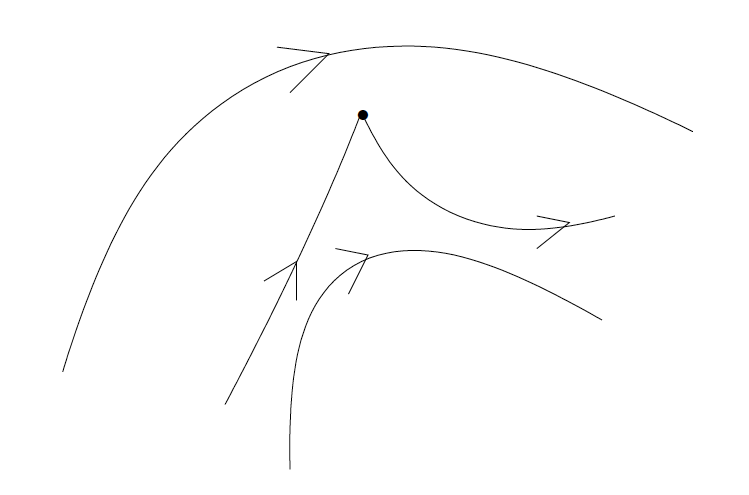}
\caption{The Local Dynamics Around a Fixed Point}
\label{the_local_situation}
\end{center}
\end{figure}
\begin{proof} 
Consider the linear transformation $Df(p)$, which has positive determinant. 
If $1$ is not an eigenvalue of $Df(p)$, then $p$ is either an hyperbolic fixed saddle point,  
an elliptic fixed point (that is, both eigenvalues are in the unit circle and not real),
or $-1$ is an eigenvalue. 
In all the above possibilities, $p$ has topological index equal to $-1$ or $1$. 
As the index at $p$ is zero,
the above cases do not happen. 
If the two eigenvalues of $Df(p)$ 
are $1$ and some $a>0$ with $a\neq 1$, then as an application of center manifold 
theory (see \cite{Carr}), we get that  
$p$ can be a topological saddle, a topological sink (or source), or a saddle-node.  
Since by assumption $p$ has topological index $0$, it must be a saddle-node. 
In this case, $p$ has two saddle sectors, and one sector which is 
either contracting or expanding, 
a contradiction with the non-wandering condition. 
For more details, see Proposition 6 from~\cite{Addas_Calvez}, 
as well as~\cite{Carr}.

Thus, $Df(p)$ must have both eigenvalues equal to $1$. 
The rest of the proof follows exactly the same lines from 
the argument in Section 2 of \cite{Addas_Calvez}.
\end{proof}


    \begin{definition}\label{generic_in_generic}
    For any $r\geq 1$ or $r=\infty$, 
define $\mathcal K^r \subset \text{Diff}_{r,\text{nw}}(\Bbb T^2)$ to be the subset of 
diffeomorphisms $f$ satisfying 
the following four conditions:
\begin{enumerate}
\item  for all $n>0$, there 
are at most finitely many $f^n$-fixed points. 
\item  for any $f^n$-fixed point $z$, of prime period $n$, if $1$ is an 
eigenvalue of $Df^n(z),$ then it has multiplicity $2$ and $f^n$ is 
of Lojasiewicz type at $z$. Moreover, in this case the $\text{Index}_{f^n}(z)$ 
is zero, and so Lemma~\ref{section_two_lemma} implies that 
the local dynamics near $z$ is given by Figure 1. For families of maps, this 
situation corresponds to the birth or death of periodic points. 
\item for any $f^n$-fixed point $w$, of prime period $n$, if $-1$ is an 
eigenvalue of $Df^n(w),$ then $\text{Index}_{f^n}(z)$ is $1.$ 
For families, this situation corresponds to a period-doubling bifurcation.
\item there are no connections 
between saddle-like periodic points.
\end{enumerate}
    \end{definition}

 $\mathcal K^r\backslash \mathcal G^r$
 can be thought as 
a sort of set of typical diffeomorphisms 
in the complement of $\mathcal G^r$.
The definition can also be justified as follows. 
Let $\mathcal F$  
denote some one-parameter 
 $C^r$-generic family of area-preserving diffeomorphisms, 
  \begin{equation}
  \mathcal F:=\{f_t\}_{t\in[a,b]} \subset \text{Diff}_{r,\text{Leb}}(\Bbb T^2), \text{ for some } r\geq 1.
  \end{equation}
 The following statement
  is a combination of results from ~\cite{Salvador_Growth} and ~\cite{Addas_Calvez}. 
  The proofs were based on previous results contained in~\cite{Meyer} and~\cite{Dumortier}.
  \begin{lemma}\label{generic_family}[Section 1.3.3 of~\cite{Salvador_Growth} and 
  Section 2 of ~\cite{Addas_Calvez}] 
  For such a generic 
  $C^r$-family $\mathcal F$ as above,
   all maps $f_t$ belong to $\mathcal K^r$; in particular,  
such a family never has saddle-like connections (tangencies are of course allowed),
and with respect to periodic points, the only allowed degeneracies 
for a certain parameter $t$ are, period-doubling bifurcations (item (3) above) 
and the one which appears in item (2), which as we already said, is related to 
the birth or disappearance of periodic points for families of maps.
  \end{lemma}

The next result gives a perturbation consequence based on 
the local dynamics near fixed points. Let 
$Fix(\widetilde{f})=\{z\in \Bbb T^2:\forall 
\widetilde{z}\in \pi ^{-1}(z),$ $
\widetilde{f}(\widetilde{z})=\widetilde{z}\}.$

\begin{prop}[Proposition 9 of \cite{Addas_Calvez}]\label{AC} 
Suppose $\widetilde f \in \widetilde{\text{Homeo}}_0 (\Bbb T^2)$. 
Assume $Fix(\widetilde{f})$ is finite and for any 
$z_0\in \text{Fix}(\widetilde{f})$,  
there exists a local continuous chart $\psi:U\to \Bbb R^2$, such that, for any 
$z\in ( U\cap f^{-1}(U) )\backslash \{z_0\}$, 
$\text{pr}_1\circ \psi\circ f(z) > \text{pr}_1 \circ \psi(z)$. 
 Then, there exists $\varepsilon>0$, 
  such that for
  any any $\widetilde g \in  \widetilde{\text{Homeo}}_0 (\Bbb T^2)$
  with $\text{dist}_{C^0}(\widetilde g,\widetilde f)<\varepsilon$,
  $(0,0)\notin \text{int}(\rho(\widetilde g))$.
\end{prop}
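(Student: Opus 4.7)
The approach is by contradiction: suppose there is a sequence $\widetilde g_n \to \widetilde f$ in $C^0$ with $(0,0)\in \text{int}(\rho(\widetilde g_n))$ for every $n$. The plan is to combine a local index computation at each $\widetilde f$-fixed point, driven by the chart hypothesis, with a fixed-point-index consequence of $(0,0)$ lying in the interior of a rotation set (via Lemma~\ref{Brouwer}), in order to reach an incompatibility.

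First I would use the chart condition to confine and count the fixed points of $\widetilde g_n$. For each $\widetilde z_0 \in \text{Fix}(\widetilde f)$, choose a small closed topological disk $D \subset U$ around $\widetilde z_0$ and a smaller open disk $V \subset D$; the continuous function $h(z):=\text{pr}_1\circ\psi\circ f(z)-\text{pr}_1\circ\psi(z)$ is strictly positive on the compact annulus $\overline{D}\setminus V$, so bounded below there by some $c>0$. For $n$ large, $\text{dist}_{C^0}(\widetilde g_n,\widetilde f)<c/2$ forces the same strict inequality for $\widetilde g_n$ on $\overline{D}\setminus V$. Hence every $\widetilde g_n$-fixed point of $D$ lies in $V$, and the chart displacement $\psi\circ \widetilde g_n\circ \psi^{-1}-\mathrm{id}$ restricted to $\partial \psi(D)$ takes values in the open right half-plane, so its winding number around the origin is $0$. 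In particular, the total topological index of $\widetilde g_n$-fixed points inside $D$ equals $0$. Since $\text{Fix}(\widetilde f)$ is finite, every contractible $\widetilde g_n$-fixed point sits in $\bigcup_{\widetilde z_0} D_{\widetilde z_0}$ for $n$ large.

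Next I would extract from $(0,0)\in\text{int}(\rho(\widetilde g_n))$ a $\widetilde g_n$-fixed point of \emph{strictly positive} topological index. Using that $(0,0)$ is an interior point, Franks-type arguments give $\widetilde g_n$-periodic orbits with rational rotation vectors on opposite sides of $(0,0)$; combining these with a small free topological disk and the push condition, one produces an open disk $W$ for which $\widetilde g_n(W)\cap W=\emptyset$ while $\widetilde g_n^k(W)\cap W\neq \emptyset$ for some $k\geq 2$. By Lemma~\ref{Brouwer}, this yields a $\widetilde g_n$-fixed point of strictly positive topological index, which by the previous paragraph must lie in some $V^*$ near a $\widetilde z^*\in \text{Fix}(\widetilde f)$.

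Finally, the zero-sum conclusion of the first step inside $D_{\widetilde z^*}$ forces a second $\widetilde g_n$-fixed point in $V^*$ of strictly negative index. A careful topological analysis, using the one-sided push in the chart $\psi$ together with the local model from Lemma~\ref{section_two_lemma} (one stable and one unstable branch at $\widetilde z^*$ dividing $V^*$ into two sectors), rules this out: the boundary of a small isolating neighborhood of the putative negative-index point would have to have displacement winding nontrivially, contradicting the right-half-plane conclusion of the first step at the smaller scale. The main obstacle is precisely this last step: translating the global rotation-set hypothesis into a strict positive index at an individual $\widetilde g_n$-fixed point, and then ruling out the companion negative-index point that the displacement inequality by itself does not exclude. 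This is where the topological machinery from Section 2 of \cite{Addas_Calvez} is essential, producing an isotopy from identity to $\widetilde f$ with a transverse foliation that persists under small $C^0$-perturbations and rigidifies the local dynamics enough to yield the contradiction.
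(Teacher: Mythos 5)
The paper does not prove this Proposition; it is quoted directly from \cite{Addas_Calvez} (Proposition 9 there), so there is no in-paper proof to compare against. Evaluating your proposal on its own terms, there is a genuine gap at the final step, and you acknowledge it.

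Your first step is sound: for $\widetilde g_n$ $C^0$-close enough to $\widetilde f$, every fixed point of $\widetilde g_n$ is trapped in a small disk $V$ about a fixed point of $\widetilde f$, and since the displacement of $\widetilde g_n$ in the chart $\psi$ has strictly positive first coordinate on the annulus $D\setminus V$, the total index of $\widetilde g_n$-fixed points in $D$ is $0$. The trouble is that this winding-number argument gives \emph{only} the vanishing of the total index in $D$; it gives no information whatsoever about the displacement of $\widetilde g_n$ \emph{inside} $V$. An index-$(+1)$ fixed point together with an index-$(-1)$ companion inside $V$ is perfectly consistent with everything the chart hypothesis forces on the annulus. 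So the ``contradiction'' sought in your last paragraph cannot follow from index bookkeeping alone, and your own description of step three (producing the positive-index fixed point via Lemma~\ref{Brouwer}) also leaves the construction of the required free recurrent disk $W$ unexplained. Moreover, your appeal to Lemma~\ref{section_two_lemma} to ``rigidify the local dynamics'' is misplaced: that lemma applies to $C^r$ diffeomorphisms with fixed points of Lojasiewicz type, while the perturbations $\widetilde g_n$ in the statement are arbitrary homeomorphisms, for which no such local model is available.

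The correct mechanism in \cite{Addas_Calvez} is not an index dichotomy. The local chart condition lets one build, around the finitely many fixed points of $f$ and off of them, a structure (an identity isotopy together with a positively transverse singular foliation, in the spirit of Le Calvez's equivariant Brouwer theory) that is stable under small $C^0$-perturbation by compactness; this structure directly confines the rotation set of any nearby $\widetilde g$ to a closed half-plane through $(0,0)$, precluding $(0,0)$ from being an interior point. This is a qualitatively different argument from the index-counting strategy you propose, and it is exactly the ingredient your paragraph waves toward without supplying.
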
    
\subsection{Prime Ends Rotation Numbers}\hfill\\

For an open topological disk $D$ contained in some surface, 
one can attach an artificial circle, called the \textit{prime ends circle},
denoted by $b_{\text{PE}}(D)$. 
Moreover, 
the prime ends circle $b_{\text{PE}}(D)$ can 
be be topologized, 
such that the union 
$D\bigcup b_{\text{PE}}(D)$ is homeomorphic to 
the standard closed unit disk in $\Bbb R^2$. 
We call it the \textit{prime ends disk}, and 
this procedure is referred to as 
 \textit{prime ends compactification}.
This is the beginning of Carath\'eodory's prime ends theory, and we refer 
to~\cite{Mather_topological}, \cite{Mather_invariant} and \cite{PE_01} for more details. 

If $f$ is a 
homeomorphism on the closure of an open topological 
disk $D$ into itself, then 
$f$ extends to the unit disk $D\bigcup b_{\text{PE}}(D)$, 
so it induces a 
 homeomorphism on the 
 prime ends circle $b_{\text{PE}}(D)$. 
 Then, the dynamics restricted to this circle defines 
 a rotation number, 
called the \textit{prime ends rotation number}, and
denoted by $\rho_{\text{PE}}(f,D)$.
The following lemma is a combination of important results from 
several papers.

\begin{lemma}\label{primeends} 
Suppose $\widetilde h$
 is a lift of some $h\in \text{Diff}_{r ,\text{nw}}(\Bbb T^2)$ 
 satisfying
the following properties: 
\begin{enumerate}
\item There are at most finitely many  
$h^n$-fixed points for all $n\geq 1$;
\item For all $n\geq 1$, and any 
$h^n$-fixed point $p$, of prime period $n$,
 \begin{enumerate}
 \item either
none of the eigenvalues of $Dh^n(p)$ is equal to $1$,
\item or, if one 
of the eigenvalues of  $Dh^n(p)$ is $1$, then the topological index of $h^n$ at 
$p$ is zero and actually, both eigenvalues are equal to $1$ (see the  
proof of Lemma \ref{section_two_lemma}). Moreover, $p$ is of Lojasiewicz type for $h^n$.
\end{enumerate}
\end{enumerate}
Let $K\subset \Bbb R^2$ be an $\widetilde h$-invariant 
continuum and let $O$ denote 
an $\widetilde h$-invariant 
connected component of $K^c.$
Write $\rho_{\text{PE}}(\widetilde {h}, O)$ to denote  the prime ends rotation number of $\widetilde h$ restricted to $O$. 
Then the following statements are true: 
\begin{enumerate}[i)]
\item 
If $\rho_{\text{PE}}(\widetilde {h}, O)$ 
is rational and $O$ is bounded, or $\rho_{\text{PE}}(\widetilde {h}, O)$ is zero, then
 $\partial O$ contains only saddle-like $\widetilde {h}$-periodic points, and connections between these saddle-like periodic points.  
\item If $O$ is not equal to $K^c$ and 
$\rho_{\text{PE}}(\widetilde {h}, O)$ is irrational, 
then there is no $\widetilde h$-periodic point in $\partial O$.
\end{enumerate}
\end{lemma}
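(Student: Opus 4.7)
The plan is to treat the two claims in parallel, reducing both to classical prime-ends theory (Cartwright--Littlewood--Mather) combined with the local dynamical model for periodic points provided by Lemma~\ref{section_two_lemma}. Let $\hat h$ denote the extension of $\widetilde h^n$ to the prime-ends compactification $O \cup b_{\text{PE}}(O)$, and let $\hat h|_{b_{\text{PE}}(O)}$ be the induced circle homeomorphism, whose rotation number is $\rho_{\text{PE}}(\widetilde h^n, O)$.

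For (i), assume $\rho_{\text{PE}}(\widetilde h^n, O) = p/q$. Then $\hat h^q$ has fixed points on $b_{\text{PE}}(O)$, and by Mather's accessibility theorem (\cite{Mather_topological},\cite{Mather_invariant}) each such periodic prime end is ``rational'', meaning it corresponds to an accessible $\widetilde h^{nq}$-periodic point in $\partial O$. Thus $\partial O$ contains a non-empty finite collection $\{p_1,\ldots,p_k\}$ of $\widetilde h^{nq}$-periodic points. I would next rule out, one by one, every local model at such a $p_i$ allowed by hypotheses (1)--(2) except the saddle-like one: sinks and sources are incompatible with $h$ being non-wandering; an elliptic point (index $+1$, no eigenvalue equal to $1$) has arbitrarily small forward-invariant topological disks around it and so cannot lie in the boundary of an invariant open set $O$ meeting every neighborhood of the point; the remaining possibilities in (2) are the hyperbolic saddle and the Lojasiewicz-type index-zero fixed point of Lemma~\ref{section_two_lemma}, both of which are saddle-like. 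For the connection statement, the cyclic ordering of the accessible periodic prime ends on $b_{\text{PE}}(O)$ induces a cyclic ordering on $\{p_1,\ldots,p_k\}$; using the local picture of Figure~\ref{the_local_situation}, the unstable branches leaving each $p_i$ into $O$ must accumulate on some $p_j$ along the corresponding stable branch (otherwise the arc of $b_{\text{PE}}(O)$ between the adjacent prime ends would carry an additional, unaccounted-for fixed point), yielding a closed chain of stable/unstable connections in $\partial O$.

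For (ii), I would argue the contrapositive: assuming $O \neq K^c$ and the existence of an $\widetilde h^{nm}$-periodic point $p \in \partial O$ for some $m \geq 1$, I would produce a fixed prime end of $\hat h^{nm}$ and thereby force $\rho_{\text{PE}}(\widetilde h^n,O) \in \mathbb{Q}$. The local classification from Lemma~\ref{section_two_lemma} and hypotheses (1)--(2) again restrict $p$ to being saddle-like (sinks/sources are excluded by non-wandering; elliptic points cannot border $O$ as above). Since $p \in \partial O$ and $O$ is open and invariant, at least one of the finitely many local stable or unstable branches of $p$ must penetrate $O$, producing an accessibility path from the interior of $O$ to $p$ that is invariant under $\widetilde h^{nm}$. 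This yields an accessible periodic prime end, and hence a periodic orbit for $\hat h^{nm}|_{b_{\text{PE}}(O)}$, forcing $\rho_{\text{PE}}(\widetilde h^n, O)$ to be rational, contradicting the hypothesis.

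The main obstacle is the accessibility argument linking periodic points of $\widetilde h^{nq}$ in $\partial O$ with fixed prime ends of $\hat h^q$ in both directions. In the forward direction one needs to exhibit a genuine curve $\gamma \subset O$ landing at a saddle-like $p_i$ that is compatible with the prime-end equivalence relation; in the reverse direction (used in (ii)) one must show that at a saddle-like $p \in \partial O$ some branch actually enters $O$ rather than running entirely inside $\partial O$, and this is exactly where the local dynamical model of Figure~\ref{the_local_situation} and the invariance of $O$ are indispensable. Once accessibility is pinned down, the rest of the argument is a packaging of Mather's theorem together with the rigid index/branch structure provided by conditions (1) and (2).
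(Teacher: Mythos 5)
There is a genuine gap, and it sits precisely where you identify ``the main obstacle.'' Your approach hinges on two claims that are not justified by the lemma's hypotheses. First, you assert that a fixed prime end of $\hat h^q$ ``corresponds to an accessible $\widetilde h^{nq}$-periodic point in $\partial O$.'' This implication does not hold in general: the Cartwright--Littlewood fixed-point theorem gives a fixed point of $\widetilde h^{nq}$ somewhere in $\partial O$, but says nothing a priori about accessibility, and a fixed prime end need not have an accessible point in its principal set. Accessibility in this setting is a \emph{conclusion} of Mather's Theorem 5.1 in \cite{Mather_invariant}, and that theorem requires one to first know the local dynamical type of the boundary fixed point; you use accessibility as an input instead. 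Second, your exclusion of elliptic boundary points rests on the statement that an elliptic fixed point with no eigenvalue $1$ has arbitrarily small forward-invariant topological disks around it. That is a KAM-type statement (Moser's twist theorem) which requires both high smoothness and a twist non-degeneracy condition; neither is guaranteed by the Lojasiewicz hypothesis or by non-wandering, so this step is unfounded.

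The paper's route avoids both difficulties. After Cartwright--Littlewood produces a fixed point $z\in\partial O$, it invokes the main result of \cite{PE_02} (see also Theorem 1.4 of \cite{ICMKoro}): at a differentiable periodic point in the boundary of an invariant open disk, the eigenvalues of $Dh^n(z)$ are both \emph{real and positive}. This immediately eliminates the elliptic case (which has complex eigenvalues), so under the non-wandering hypothesis $z$ is forced to be a hyperbolic saddle or the zero-index Lojasiewicz case of Lemma~\ref{section_two_lemma}, i.e.\ saddle-like. Only then does Mather's Theorem 5.1 deliver the connections in $\partial O$. For item (ii) the paper simply quotes Theorem C of \cite{PE_01} rather than running a contrapositive/accessibility argument, which again needs exactly the missing accessibility input. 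The missing ingredient in your proposal is the eigenvalue theorem from \cite{PE_02}; without it, neither the exclusion of elliptic points nor the accessibility chain closes.
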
 

\begin{proof}[Sketch of the proof] Let us show item (i). 
Assume $\rho_{\text{PE}}(\widetilde {h}, O)=\frac pq$ 
which is in reduced form. 
Then, as $h$ is non-wandering, a prime chain corresponding to a $q$-periodic prime end $\widehat{z}$ has the property that each of its crosscuts must intersect its image under $\widetilde{h}^{q},$ otherwise the corresponding crossections would contain wandering domains, even in the torus (this argument goes back to Cartwright-Littlewood, see for instance proposition 2.1 of \cite{Franks_LeCalvezRegions}). So, the principal set of $\widehat{z}$ is made of $\widetilde{h}^{q}$-fixed points, something that implies the first assertion of item (i), i.e., 
there exists some $\widetilde{h}^{q}$-fixed point $z\in \partial O$. 

The  main results of \cite{PE_02} (see also Theorem 1.2, Corollary 1.3 and Theorem 1.4 of the report \cite{ICMKoro} from ICM 2018), imply:
\begin{enumerate}
\item if $q=1$, then all $\widetilde{h}$-periodic points $z\in \partial O$ are fixed and the eigenvalues of $D\widetilde{h}(z)$
are both real and positive;
\item if $O$ is bounded, for any value of $p/q$, all $\widetilde{h}$-periodic points $z\in \partial O$ have prime period $q$ and 
the eigenvalues of $D\widetilde{h}^q(z)$ are both real and positive;
\end{enumerate}
 So, as $h$ is non-wandering, a periodic point $z\in \partial O$ is either an hyperbolic saddle or both eigenvalues of $Dh^q(z)$ are equal to $1$ and $z$ has topological index $0$. In this way, Lemma \ref{section_two_lemma} implies that $z$ is a saddle-like periodic point, either a hyperbolic saddle or a point with zero index and local dynamics as is Figure~\ref{the_local_situation}.

With this local description, 
Theorem 5.1 in~\cite{Mather_invariant} implies that 
the boundary $\partial O$ contains connections between  
saddle-like periodic points, as described above.
We should remark that although in reference \cite{ICMKoro}, 
most statements assume preservation of area 
and boundedness of $O$ as a planar subset, 
the arguments therein only use the fact that
the dynamics is non-wandering 
restricted to a small neighbourhood 
of the compact set $K$. 
This completes the proof of 
 item (i). 
 
Item (ii) is a direct consequence of Theorem C of \cite{PE_01}.
\end{proof}

\section{Perturbations for Homeomorphisms with Unbounded Deviation}
\label{proofs_unbounded}

In this section and in the next, 
for any $w\in \Bbb R^2$, we will write  $[w]$ to denote an integral vector which is the closest to $w$.
Also, a condition assumed in all the theorems proved here 
is unbounded deviation for a fixed direction, along which,
we want our rotation set to grow. 

\begin{theorem}\label{instability} 
Let $\widetilde f \in \widetilde{\text{Homeo}}_{0,\text{nw}}(\Bbb T^2)$ whose 
rotation set $\rho(\widetilde f)$ is a line segment from $(0,0)$
to some $(\alpha,\beta) \in \Bbb R ^2$ which is totally irrational.
Assume
$\widetilde f$ has unbounded deviation along the direction 
$(\alpha,\beta)$. 
Then $\widetilde f$ can be $C^0$-approximated by 
$\widetilde g \in \widetilde {\text{Homeo}_0}(\Bbb T^2)$ 
such that $\rho(\widetilde g)$ has non-empty interior and
 \begin{equation}
 \rho(\widetilde f)\backslash \{(0,0)\}
  \subset \text{Int}( \rho(\widetilde g) ).
 \end{equation} 
\end{theorem}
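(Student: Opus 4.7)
The plan is to exploit the unbounded deviation hypothesis to construct, for any $\varepsilon>0$, two long pseudo-periodic orbits of $f$ whose lifted displacement vectors sit just past $(\alpha,\beta)$ in the direction $\hat v:=(\alpha,\beta)/\|(\alpha,\beta)\|$ and whose integer closure vectors lie on opposite sides of the line $\ell$ through $(0,0)$ and $(\alpha,\beta)$. Closing each pseudo-orbit by an $\varepsilon$-small $C^0$-perturbation supported in disjoint neighborhoods, and leaving a fixed point of $\widetilde f$ untouched, I would obtain $\widetilde g\in\widetilde{\text{Homeo}}_0^+(\Bbb T^2)$ admitting three rotation vectors $(0,0)$, $v^+$, $v^-$, with $v^\pm$ rational, past $(\alpha,\beta)$ along $\hat v$, and on opposite sides of $\ell$. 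By Theorem~\ref{rotationset}, $\rho(\widetilde g)$ is convex, so it contains $\text{conv}\{(0,0),v^+,v^-\}$, a triangle whose interior contains both $(\alpha,\beta)$ and the whole open segment $\{t(\alpha,\beta):t\in(0,1)\}$, hence all of $\rho(\widetilde f)\setminus\{(0,0)\}$.

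First I would produce the pseudo-orbits. Fix a large $M>0$; the unbounded-deviation hypothesis yields $\widetilde y\in\Bbb R^2$ and $N\in\Bbb N$ with
\[
\text{pr}_{(\alpha,\beta)}\bigl(\widetilde f^N(\widetilde y)-\widetilde y\bigr)\geq N(\alpha^2+\beta^2)+M.
\]
By Theorem~\ref{Fabio_Bounded}, the perpendicular component of $\widetilde f^N(\widetilde y)-\widetilde y$ is uniformly bounded, so this displacement lies in a strip of bounded width around $\ell$ and sits at distance at least $M/\|(\alpha,\beta)\|$ past $N(\alpha,\beta)$ along $\hat v$. Using density of recurrent points (Lemma~\ref{dense}) near $\pi(\widetilde y)$ and a return-time adjustment, one finds for any prescribed $\delta<\varepsilon$ a recurrent $y^\ast\in\Bbb T^2$, a return time $n\geq N$, and a lift $\widetilde y^\ast$ with $d_{\Bbb T^2}(f^n(y^\ast),y^\ast)<\delta$ and
\[
\widetilde f^n(\widetilde y^\ast)-\widetilde y^\ast=(a,b)+\eta,\qquad (a,b)\in\Bbb Z^2,\ |\eta|<\delta,
\]
where $(a,b)$ stays in the strip around $\ell$ and still sits past $n(\alpha,\beta)$ along $\hat v$ by a definite amount.

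Next I would force the integer closure vectors to land on both sides of $\ell$. The sign of $\alpha b-\beta a$ (nonzero by total irrationality of $(\alpha,\beta)$) detects the side, and total irrationality makes $\{\alpha b-\beta a:(a,b)\in\Bbb Z^2\}$ dense in $\Bbb R$, so integer points of either sign are available within the bounded strip around $\ell$. Refining the supporting-line perturbation mechanism of \cite{Salvador}, applied to the two supporting lines at the extremal point $(\alpha,\beta)$ that tilt slightly above and below $\ell$, I would extract two pseudo-periodic orbits with lifts $\widetilde y^\pm$, periods $n^\pm$, and integer closures $(a^\pm,b^\pm)$ satisfying
\[
\alpha b^+-\beta a^+>0,\qquad \alpha b^--\beta a^-<0,
\]
with both $v^\pm:=(a^\pm,b^\pm)/n^\pm$ past $(\alpha,\beta)$ along $\hat v$. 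By the freedom to choose recurrent starting points inside a dense set (Lemma~\ref{dense}), the $\pi(\widetilde y^\pm)$ can moreover be placed in disjoint open subsets of $\Bbb T^2$.

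Finally, since $(0,0)\in\rho(\widetilde f)$ is rational, Franks' fixed point theorem supplies $\widetilde p_0\in\text{Fix}(\widetilde f)$. I would choose disjoint open neighborhoods $U^\pm\subset\Bbb R^2$ of $\widetilde f^{n^\pm}(\widetilde y^\pm)$ missing $\widetilde p_0$, and perform a compactly supported $\varepsilon$-small perturbation inside $U^+\cup U^-$ that sends $\widetilde f^{n^\pm}(\widetilde y^\pm)$ exactly to $\widetilde y^\pm+(a^\pm,b^\pm)$. The resulting $\widetilde g$ satisfies $d_{C^0}(\widetilde g,\widetilde f)<\varepsilon$, $\widetilde g^{n^\pm}(\widetilde y^\pm)=\widetilde y^\pm+(a^\pm,b^\pm)$, and $\widetilde g(\widetilde p_0)=\widetilde p_0$, so $\{(0,0),v^+,v^-\}\subset\rho(\widetilde g)$, and convexity then produces the desired triangle. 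The hard part will be the third-paragraph simultaneous management of (i) the past-$(\alpha,\beta)$ displacement forced by unbounded deviation, (ii) the Diophantine two-sided condition on the integer closures, and (iii) spatial disjointness of the two closures in $\Bbb T^2$; satisfying all three conditions at once is the main new ingredient beyond the one-sided scheme in \cite{Salvador}.
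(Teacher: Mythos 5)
Your top-level strategy is the same as the paper's: produce two pseudo-periodic orbits whose rational displacement vectors $v^\pm$ both lie past $(\alpha,\beta)$ along the $(\alpha,\beta)$-direction, sit on opposite sides of the line $\ell=\Bbb R(\alpha,\beta)$, close them with disjointly-supported $\varepsilon$-perturbations while leaving a Franks fixed point untouched, and invoke convexity. This covers exactly what the paper calls Case One, where both sides of $\ell$ are realized by naturally occurring returning pseudo-orbits of $f$.

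The gap is that you have not addressed the possibility that every near-recurrent pseudo-orbit of $f$ near the accumulation point (with excess displacement along $(\alpha,\beta)$) lands on the \emph{same} side of $\ell$. Your Diophantine argument — density of $\{\alpha b-\beta a\}$ over $\Bbb Z^2$ — shows that integer vectors of both signs are available in the bounded strip, but this is a statement about the lattice, not about which displacements the dynamics of the \emph{fixed} map $f$ actually realizes; nothing forces the flow of orbit segments to hit both signs. Likewise, "refining the supporting-line perturbation mechanism of \cite{Salvador}" does not give what you need: applying that theorem to two tilted supporting lines would produce two \emph{different} perturbed homeomorphisms, each going past one of the lines, whereas you need two pseudo-orbits of the unperturbed $f$ to close simultaneously in a single $g$. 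The paper resolves this (Case Two) by a concatenation/staircase construction: starting from a pseudo-orbit landing in the "wrong" region $\Delta_0$, it strings together recurrent orbit segments, each chosen via the Small Displacement Lemma~\ref{displacement} so as to drift a definite amount in the $(\beta,-\alpha)$-direction while keeping the $(-\beta,\alpha)$-component of each segment's displacement small, and ends with one segment of very large positive $(\alpha,\beta)$-excess; the net displacement of the concatenated pseudo-orbit then lands in the opposite region $\Omega_0$. This is the essential new ingredient that you flag in your last paragraph as "the hard part" but do not supply, and without it the argument is incomplete precisely in the case that actually requires work.
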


When the rotation set is as above, 
we can also study a similar situation around 
the other endpoint, $(0,0)$. 


\begin{theorem}\label{case_of_0}
Let $\widetilde{f}$ and $\rho(\widetilde f)$ be as in Theorem~\ref{instability}.
Assume $\widetilde {f}$ admits unbounded deviation along $-(\alpha,\beta)$.
Then $\widetilde {f}$ can be $C^0$-approximated 
by $\widetilde g \in \widetilde{\text{Homeo}}_0(\Bbb T^2)$,
such that $(0,0) \in \text{Int}( \rho(\widetilde g) )$. 
\end{theorem}

\subsection{Some Preparations}\label{forms}\hfill \\
Given the totally irrational vector $(\alpha,\beta)$, define 
\begin{align}
L_0 & : y=\frac{\beta}{\alpha}x. \\
L_1 & : \alpha x+\beta y=\alpha^2+\beta^2,
\end{align}
which are straight lines along the directions 
$(\alpha, \beta)$, $(-\beta,\alpha)$, respectively, and intersecting at 
the point $(\alpha,\beta)$. 
Also define the four connected components of the complement of 
$L_0 \bigcup L_1$ in $\Bbb R^2$. See 
Figure~\ref{the_four_regions}.
\begin{align}
\Delta_0         & =   \{ \widetilde z \in \Bbb R^2   \big|  
\text{pr}_{(-\beta,\alpha)} (\widetilde z) >0
 \text{ and } \text{pr}_{(\alpha,\beta)} \big(\widetilde z - (\alpha,\beta) \big)  > 0 \}. \label{region1}\\
\Delta_1         & =   \{ \widetilde z  \in \Bbb R^2  \big| 
\text{pr}_{(-\beta,\alpha)} (\widetilde z)<0 \text{ and }  \text{pr}_{(\alpha,\beta)}\big( \widetilde z-(\alpha, \beta) \big) < 0\}. \\
\Omega_0      & =  \{ \widetilde z  \in \Bbb R^2  \big|  \text{pr}_{(-\beta,\alpha)} (\widetilde z)<0 \text{ and } \text{pr}_{(\alpha,\beta)} \big(\widetilde z-(\alpha,\beta) \big) >0 \}. \\
\Omega_1      & =  \{ \widetilde z \in \Bbb R^2   \big|  
\text{pr}_{(-\beta,\alpha)}(\widetilde z) >0 \text{ and } \text{pr}_{(\alpha,\beta)} \big(\widetilde z -(\alpha,\beta) \big) < 0 \}.\label{region4} 
\end{align}

\begin{figure}[!ht]
\begin{center}
\begin{picture}(165,165) 

\put(0,30){\vector(1,0){150}}
\put(30,0){\vector(0,1){150}}
\put(10,10){\line(1,1){120}}
\put(60,120){\line(1,-1){60}}

\put(60,85){$\Omega_1$}
\put(85,110){$\Delta_0$}
\put(85,60){$\Delta_1$}
\put(90,90){\circle*{3}}

\put(80,76)
{ \scriptsize $(\alpha,\beta)$}
\put(110,85){$\Omega_0$}
\end{picture}
\end{center}
\caption{The Four Regions.}\label{the_four_regions}
\end{figure}

\begin{lemma}\label{firstclosing} 
Let $\{x^{(0)}, \cdots, x^{(n-1)}\}$ 
be an $\varepsilon$-pseudo periodic orbit of some $f\in \text{Homeo}_0(\Bbb T^2)$. 
Then, $f$ can be $C^0$-$\varepsilon$-perturbed to 
$g$, also in $\text{Homeo}_0(\Bbb T^2)$
 such that $g^n(x^{(0)})=x^{(0)}$. Moreover, if $f$ preserves area, then so does $g$.
\end{lemma}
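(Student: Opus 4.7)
The plan is to construct $g$ of the form $g=\phi\circ f$, where $\phi:\Bbb T^2\to\Bbb T^2$ is a homeomorphism with $\text{dist}_{C^0}(\phi,\text{id})<\varepsilon$ satisfying $\phi(f(x^{(i)}))=x^{(i+1\bmod n)}$ for every $i$. Once such a $\phi$ is available, $g(x^{(i)})=x^{(i+1\bmod n)}$ for each $i$, and hence $g^n(x^{(0)})=x^{(0)}$, while $\text{dist}_{C^0}(g,f)=\text{dist}_{C^0}(\phi,\text{id})<\varepsilon$, as required.

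The perturbation $\phi$ will be built as a composition $\phi=\phi_0\circ\phi_1\circ\cdots\circ\phi_{n-1}$, where each $\phi_i$ is supported in a small topological disk $B_i$ of diameter less than $\varepsilon$ containing both $f(x^{(i)})$ and $x^{(i+1\bmod n)}$, and satisfies $\phi_i(f(x^{(i)}))=x^{(i+1\bmod n)}$. In order for the composition to realize all the desired assignments at once, the disks $B_i$ must be pairwise disjoint, and each $B_i$ must avoid the remaining special points $\{f(x^{(j)})\}_{j\neq i}\cup\{x^{(j)}\}_{j\neq i+1\bmod n}$, so that $\phi_i$ does not interfere with $\phi_j$ for $j\neq i$.

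The main obstacle is arranging this disjointness. I would first reduce to the \emph{simple} case, where the points $x^{(0)},\dots,x^{(n-1)}$ are pairwise distinct: if $x^{(i)}=x^{(j)}$ for some $0\le i<j\le n-1$, then iterating the pseudo-orbit relation and using injectivity of $f$ yields a shorter $\varepsilon$-pseudo periodic orbit of length $m=j-i$ dividing $n$, and closing it as a period-$m$ orbit already gives $g^n(x^{(0)})=x^{(0)}$. Under simplicity, the images $f(x^{(i)})$ are also pairwise distinct. Using the strict inequality $\text{dist}(f(x^{(i)}),x^{(i+1\bmod n)})<\varepsilon$, there is a positive slack permitting an arbitrarily small preliminary adjustment of $x^{(1)},\dots,x^{(n-1)}$ (keeping $x^{(0)}$ fixed) that places the $2n$ points $\{x^{(i)}\}\cup\{f(x^{(i)})\}$ in general position while preserving the $\varepsilon$-pseudo periodicity.

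Under this general position, for each $i$ I draw a simple arc in $\Bbb T^2$ from $f(x^{(i)})$ to $x^{(i+1\bmod n)}$ that avoids all the other special points, and let $B_i$ be a sufficiently thin tubular neighborhood of this arc; since $\Bbb T^2$ is a surface and we are avoiding only finitely many points, these arcs, and hence the $B_i$'s, can be taken pairwise disjoint and of diameter less than $\varepsilon$. Finally I pick $\phi_i$ supported in $B_i$ with $\phi_i(f(x^{(i)}))=x^{(i+1\bmod n)}$, which exists on a disk and moves each point by at most $\text{diam}(B_i)<\varepsilon$. The composition $\phi=\phi_0\circ\cdots\circ\phi_{n-1}$ then satisfies $\text{dist}_{C^0}(\phi,\text{id})<\varepsilon$ by disjointness of supports, and $\phi(f(x^{(i)}))=x^{(i+1\bmod n)}$ for every $i$, because each $f(x^{(j)})$ lies only in $B_j$ and its image $x^{(j+1\bmod n)}$ lies only in $B_j$ as well, so no other $\phi_k$ disturbs the assignment.
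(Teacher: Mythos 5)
Your construction is essentially the paper's: build $g=\phi\circ f$ where $\phi$ is a composition of homeomorphisms $\phi_i$ with pairwise disjoint small supports, each sending $f(x^{(i)})$ to $x^{(i+1\bmod n)}$. Where the paper invokes a result of Oxtoby to produce the disjoint short arcs, you give a self-contained general-position argument (perturb the points slightly within the slack, draw disjoint arcs on the surface, take thin tubular neighborhoods); this is a perfectly good substitute.

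One caveat, on your reduction for coinciding points. If $x^{(i)}=x^{(j)}$ with $i<j$, the sub-sequence $\{x^{(i)},\dots,x^{(j-1)}\}$ is indeed a shorter $\varepsilon$-pseudo periodic orbit of length $m=j-i$, but nothing forces $m$ to divide $n$ (pseudo-orbit relations do not propagate equalities the way a genuine orbit does, so injectivity of $f$ buys you nothing here), and moreover this shorter pseudo-orbit need not contain $x^{(0)}$, so closing it need not yield $g^n(x^{(0)})=x^{(0)}$. The paper glosses over this degenerate case as well -- in every application the pseudo-orbits are built from disjoint genuine orbit segments, so the points are distinct -- but as written your reduction step would need repair (e.g., restrict attention to repetitions of $x^{(0)}$, or simply add the hypothesis that the $x^{(i)}$ are distinct, which is what is actually used).
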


\begin{proof}
See~\cite{Franks_ETDS_8}. For the area-preserving case, just note that $g$ is obtained 
from $f$ by a series of perturbations supported in finitely many disjoint disks. And 
it is well-known that these perturbations, which are the identity in the boundary of 
each disk, can be chosen as area-preserving themselves. 
\end{proof} 





Recall that Theorem~\ref{Fabio_Bounded}
says that $\widetilde f$ 
has bounded deviation along perpendicular directions.
Based on this, the following lemma gives small displacements along the perpendicular directions, for some chosen iterates. 

\begin{lemma}[Small Displacement]\label{displacement} 
	Let $w$ be either $(-\beta,\alpha)$ or $(\beta,-\alpha).$
	Then, for any $\delta>0$ and any $\widetilde z_0\in \Bbb R^2$,  
	there exists $n_0$ such that, 
	\begin{equation}\label{bounded_displacement}
		\text{pr}_{w} 
		(\widetilde f^n (\widetilde{z}_0) -\widetilde f^{n_0} (\widetilde {z}_0) ) 
		<   \delta, \text{ for any } n>n_0. 
	\end{equation}
\end{lemma}

\begin{proof} 
	Observing  Theorem~\ref{Fabio_Bounded},
	we can choose $n_0 \geq 1$, with 
	\begin{equation}
		\text{pr}_{w} (\widetilde f^{n_0} 
		(\widetilde {z}_0)- \widetilde {z}_0) > 
		\sup_{n\geq 1} \{ 
		\text{pr}_{w} (\widetilde f^n (\widetilde {z_0})- \widetilde {z_0}) \} - \delta.
	\end{equation}
	Then, for any $n>n_0$, 
	(\ref{bounded_displacement}) follows immediately.
\end{proof}

\subsection{The Irrational Endpoint}
\begin{proof}[Proof of Theorem~\ref{instability}]
Fix $\varepsilon>0$. By Theorem 1 of~\cite{Salvador}, 
for any ergodic measure with average rotation vector $(\alpha,\beta)$, around 
a typical point which is $f$-recurrent, there is some point $y$ and a positive integer $n$, such that 
\begin{align}
& \text{dist}_{\Bbb T^2}(f^n(y), y)<\varepsilon.\label{epsilon_close}\\
& \text{pr}_{(\alpha,\beta)}([ \widetilde f^n(\widetilde y)  - \widetilde y] -n (\alpha,\beta))  > 0 . \label{epsilon_away}
\end{align}
Considering Figure~\ref{the_four_regions}, the last estimate implies that
\begin{equation}\label{Delta0Omega0}
\frac {1}{n} [\widetilde f^n(\widetilde y) -\widetilde y] \in \Omega_0 \bigcup \Delta_0.
\end{equation}


From now on, we assume that $\widetilde y_0$ and $n^\ast \geq 1$
satisfy that $f^{n^\ast}(y_0)$ is $\varepsilon$-close to 
$y_0$,  and 
\begin{equation}\label{choice_of_y0}
\frac 1n [\widetilde f^{n^\ast}(\widetilde y_0)-\widetilde y_0] \in \Delta_0.
\end{equation}
The other possibility (the rotation vector obtained in the above expression belonging to $\Omega_0$) is analogous.
The goal is to show that,
we can always find another $6\varepsilon$-pseudo periodic orbit lifting to a $6\varepsilon$-pseudo $\widetilde f$-orbit segment starting at some $\widetilde z'$ and ending at some $\widetilde z''$,
so that the rational vector 
\begin{equation}
\frac 1n (\widetilde z''-\widetilde z') \in \Omega_0.
\end{equation}

We assume the totally irrational $(\alpha,\beta)$
has norm $1$ for simplicity. 
For any $K>0$, define $\mathcal R_K\subset \Bbb T^2$ to be the set of points $x$
such that for at least $K$ choices of positive integers $n$,
we have 
$\text{dist}_{\Bbb T^2}(f^n(x), x)  \leq \frac 1K$ and $\text{pr}_{(\alpha,\beta)} \big(\widetilde f^n (\widetilde x) - \widetilde x - n(\alpha,\beta) \big) \geq K.$ 
We claim that $\mathcal R_K$ is non-empty for any positive 
constant $K$. In fact, we can cover $\Bbb T^2$ with $N$ 
disks (for some integer $N$), all with diameter $\frac 1K.$ 
Then by the assumption on unbounded deviation  in the direction $(\alpha, \beta)$,
it is not hard to find $\widetilde x$, 
and integers $0=m_0<m_1<\cdots<m_{KN}$, 
such that, for any 
$k=1,\cdots,KN$,
\begin{equation}
\text{pr}_{(\alpha,\beta)}
 \big ( \widetilde f^{m_k}(\widetilde x) 
- \widetilde f^{m_{k-1}}(\widetilde x)  
- (m_k-m_{k-1})(\alpha,\beta) \big) \geq K.
\end{equation}
  By the pigeonhole principle, 
  for at least $K+1$ choices of the indices among those $m_j$'s, the corresponding 
  iterates of $x_0$ lie in one single disk with diameter no more than $\frac 1K$. 
  Clearly, between any two of these chosen ones, say $m_i<m_j$, we see 
    \begin{equation}
    \text{pr}_{(\alpha,\beta)} 
    \big(\widetilde f^{m_j} (\widetilde x) 
                   - \widetilde f^{m_i}(\widetilde x) 
  -(m_j-m_i)(\alpha,\beta) )  \geq K,
 \end{equation}
 and in particular the claim follows by taking the first iterate among those. 

 Next we take an accumulation point $x^\ast$ of 
the sets 
$\mathcal R_K$ as $K$ tends to infinity.  
By Lemma~\ref{dense}, the point $x^\ast -4\varepsilon (-\beta,\alpha)$ 
is approximated by an $f$-recurrent point. Then, with the help of Lemma~\ref{displacement}, one can find a point $z_1$ which is some forward iterate of the recurrent point we just found, and a positive integer $n_1$, such that both  $z_1$ and $f^{n_1} (z_1)$ are $\varepsilon$-close to $x^\ast -4\varepsilon (-\beta,\alpha)$ and 
\begin{equation}
	\text{pr}_{(-\beta,\alpha)} \big ( \widetilde f^{n_1} (\widetilde z_1)- 
 \widetilde{z}_1 \big ) < \varepsilon. 
\end{equation}

Next, by choosing another $f$-recurrent point near the point $ f^{n_1} (z_1)- 4\varepsilon (-\beta,\alpha)$ and then applying Lemma~\ref{displacement}, we can find 
another orbit segment satisfying similar estimates. 
In fact, we can inductively find 
$z_1, z_2, \cdots,z_{K_0}$ with disjoint orbits, and integers $n_1,n_2,\cdots,n_{K_0}$, such that, for any $i=2,\cdots, K_0$, both $z_i$ and $f^{n_i}(z_i)$ are $\varepsilon$-close to 
$f^{n_{i-1}}(z_{i-1})-4\varepsilon(-\beta,\alpha)$ and 
\begin{equation}
	\text{pr}_{(-\beta,\alpha)} \big ( \widetilde f^{n_i} (\widetilde z_i)- 
	\widetilde{z}_i \big ) < \varepsilon. 
\end{equation} 
Moreover, $K_0$ is chosen as the smallest integer such that 
$f^{K_0}(z_{K_0})-4\varepsilon (-\beta,\alpha)$ is $\varepsilon$-close to  $x^\ast$ in $\Bbb T^2$. 
It is not difficult to ensure 
$K_0$ is found to be finite and the process stops. 
Then we sum all the deviations (in the direction $(\alpha,\beta)$) of each of the above orbit segments.  
\begin{equation}\label{M2}
M := 
\sum_{i=1}^{K_0} \text{pr}_{(\alpha,\beta)} 
\big( 
\widetilde f^{n_i}(\widetilde z_i) -
\widetilde z_i -
n_i(\alpha,\beta) \big).
\end{equation}

When we consider a point $x$ 
in $\mathcal R_K$ with $K> |M| +\varepsilon(K_0+1)$,
 by definition, 
for at least $K$ positive integers, its corresponding iterates all lie in a same disk of diameter at most 
$1/K$, and pairwise the deviation along the direction $(\alpha,\beta)$ is 
at least $|M| +\varepsilon(K_0+1)$. Moreover, the dynamics has bounded deviation along the perpendicular direction (see Theorem~\ref{Fabio_Bounded}). 
So when $K$ is sufficiently large, we can find two of these iterates so that along the direction $(-\beta,\alpha)$, the deviation is at most $\varepsilon$ (with a similar argument as in Lemma~\ref{displacement}).

Recall now that $x^\ast$ is an accumulation point of $\mathcal R_K$. Therefore, by the above paragraph, we can find $z_0$ 
and an integer $n_0$, 
such that both $z_0$ and $f^{n_0}(z_0)$ are $\varepsilon$-close to $x^\ast$, 

\begin{align}
& \text{pr}_{(-\beta,\alpha)} 
\big( \widetilde f^{n_0}(\widetilde z_0) - 
\widetilde z_0 \big) < \varepsilon \text{ and } \label{jump_01} \\
& \text{pr}_{(\alpha,\beta)} 
\big( \widetilde f^{n_0}(\widetilde z_0) - 
\widetilde z_0  - n_0 (\alpha,\beta)\big)
 > |M| +\varepsilon(K_0+1). \label{large_deviation}
\end{align} 

Moreover, we can
require that the orbits of $z_i$, $i=0,\cdots,K_0$ and $y_0$ are all 
pairwise disjoint. 
Then we write down the following $K_0+1$ point-wise disjoint 
  $f$-orbit segments, namely
\begin{equation}\label{K+1segments} 
  \{z_1, f(z_1), \cdots, f^{n_1}(z_1)\}, \cdots,
 \{z_{K_0}, \cdots, f^{n_{K_0}}(z_{K_0})\}, 
  \{z_0, \cdots, f^{n_0}(z_0)\}.
\end{equation}
They together form a $6\varepsilon$-pseudo periodic orbit of period 
 \begin{equation}
 \ell= \sum_{j=0}^{K_0}n_j.
 \end{equation} 
Note that, (\ref{large_deviation}) implies the final deviation of the whole pseudo-orbit 
along $(\alpha,\beta)$ is positive. 
Among those $K+1$ segments in (\ref{K+1segments}), 
the way we jump between two consecutive orbit segment gives at least $2\varepsilon$ deviation along the direction $(\beta,-\alpha)$, and within each segment the deviation along $(-\beta,\alpha)$ is at most $\varepsilon$. So in the end 
we see positive deviation along $(\beta,-\alpha)$. It follows that 
this pseudo orbit sees a rotation vector in the region $\Omega_0$. 

Thus, we can apply Lemma~\ref{firstclosing} twice in order to
   close this and the $\varepsilon$-pseudo periodic orbits containing $y_0$
    which we found at the beginning of this proof.
   So we obtain $\widetilde g$ which is  $6\varepsilon$-close to $\widetilde f$ in the $C^0$ topology, and admits two periodic points $y_0$ and $z_1$.    
   By (\ref{choice_of_y0}), 
 $\rho(\widetilde g, y_0)\in \Delta_0$. And by the above construction, it follows that $\rho(\widetilde g, z_1)\in \Omega_0$. 
  
Since $(0,0)$ is an extremal point of $\rho(\widetilde f)$, 
by ~\cite{Franks_ETDS_8}
$f$ admits a contractible fixed point $p^\ast$. 
Now as we look back at
the whole perturbation process above, 
we can choose all the orbit segments 
far from $p^\ast$. 
This means that
 the perturbations can be made away from $p^\ast$. 
 Thus, the rotation set of  $\widetilde g$ satisfies 
 $\rho(\widetilde g) \supset
 \{(0,0), \rho(\widetilde g, z_1), 
 \rho(\widetilde g, y_0)\}$. Since any rotation set is convex (Theorem~\ref{rotationset}), 
 $\rho(\widetilde g)$ is as required. 
 As $\varepsilon$ can be arbitrarily small, the proof of Theorem~\ref{instability} is completed.
\end{proof}

\subsection{Origin as Endpoint}

\begin{proof}[Proof of Theorem~\ref{case_of_0}] 
This proof is similar to the above one. 
We fix $\varepsilon$ now. 
First let us define two new regions as follows.
\begin{align}
\mathcal D_0: = \{\widetilde z \in \Bbb R^2 \big| \text{pr}_{(\alpha,\beta)}(\widetilde z)<0 \text{ and } 
\text{pr}_{(-\beta,\alpha)} (\widetilde z) >0 \} .\\
\mathcal D_1: = \{\widetilde z \in \Bbb R^2 \big| \text{pr}_{(\alpha,\beta)}(\widetilde z)<0 \text{ and } 
\text{pr}_{(-\beta,\alpha)} (\widetilde z)<0 \}.
\end{align}

For any $K>0$,
define $\mathcal R'_K$ to be the set of points $z$ such that for at least $K$ choices of integers $n>0$, the following holds. 
\begin{align}
              \text{dist}_{\Bbb T^2} 
                      (f^n(z), z )    &   \leq    \frac 1K. \label{recurrence} \\
                       \text{pr}_{(\alpha,\beta)}
                       (\widetilde f^n(\widetilde z)-\widetilde z)      &   
                       \leq  -K. \label{deviation}
\end{align}
    With the condition of unbounded deviations in the direction $-(\alpha,\beta)$, 
    by a similar argument as in previous subsection, we can show $\mathcal R'_K$ is non-empty for any $K>0$. 
     
 
 By taking $K\geq \frac 1\varepsilon$, 
 we can find an $\varepsilon$-pseudo periodic point $y_0$ of period $n^\ast$, 
 which realizes a rotation vector in the region $\mathcal D_0\cup \mathcal D_1$. 
 Without loss of generality, assume it lies in $\mathcal D_0$ (the other possibility is
analogous). 
 More precisely, for a lift $\widetilde y_0$ of $y_0$ and some integer $n_\ast>0$, 
 $\text{dist}_{\Bbb T^2}(f^{n_\ast}(y_0)-y_0) <\varepsilon$, and 
    \begin{equation}
    v =\frac{1}{n_\ast}[\widetilde f^{n_\ast}(\widetilde y_0)- \widetilde y_0]  \in \mathcal D_0.
    \end{equation}
 
    Then, very similarly to the previous subsection, let $y^{\ast}$ be an accumulation point of 
   $\mathcal R'_K$ as $K$ tends to infinity. With the help of Lemmas~\ref{dense} 
and~Lemma~\ref{displacement}, as well as the definition of $\mathcal R_K'$,
    we can choose finitely many orbit segments, 
    which altogether form a $6\varepsilon$-pseudo periodic orbit, lifting to a $6\varepsilon$-pseudo orbit for $\widetilde f$,
    namely $\{\widetilde z_0, \widetilde z_1, \cdots, \widetilde z_{n'}\}$, such that 
    \begin{equation}
   v'= \frac{1}{n'}[\widetilde z_{n'} -\widetilde z_0] \in \mathcal D_1.
          \end{equation} 
   
 Thus, we obtain two pseudo periodic orbits seeing rotation vectors 
 $v\in \mathcal D_0$ and $v' \in \mathcal D_1.$ 
  Since $(\alpha,\beta)$ is an 
  extremal point of $\rho(\widetilde f)$, 
 by Theorem~\ref{rotationset}, there exists some ergodic $f$-invariant 
measure $\mu$
 satisfying $\rho_\mu(\widetilde  f)=(\alpha,\beta)$.
 Moreover, for a $\mu$-typical point $x \in \Bbb T^2$, 
 for some increasing integer sequence $n_j$, 
 and any lift $\widetilde {x}$, 
 \begin{align}  
 \lim_{j\to \infty} f^{n_j}(x)  & = x.\\
 \lim_{j\to \infty}
 \frac{1}{n_j}  \big( \widetilde f^{n_j}(\widetilde x) -\widetilde x \big )& = (\alpha,\beta).
 \end{align}
Then for sufficiently large $n_j$, 
 $\{x, f(x),\cdots, f^{n_j}(x)\}$ forms an $\varepsilon$-pseudo periodic 
 orbit, such that the vectors $v, v'$ and 
 \begin{equation}
 w = \frac {1}{n_j} [ \widetilde f^{n_j}(\widetilde x) -\widetilde x ]
 \end{equation}
 spans a triangle, which contains the origin $(0,0)$ in its interior. 
 Note that we can choose the three pseudo orbits to be pair-wise disjoint. 
 Then, applying Lemma~\ref{firstclosing} three times, we
obtain a $6\varepsilon$-perturbation $\widetilde g$ of $\widetilde f$. 
The rotation set 
$\rho(\widetilde g)$ contains
 at least the three rational points 
 $v,v'$ and $w$. 
 By the convexity of the rotation set, 
 $(0,0)$ is contained in  $\text{Int}\big(\rho(\widetilde g) \big)$. 
 We have completed the proof.
\end{proof}

\section{Perturbations for Homeomorphisms with Bounded Deviation}

\subsection{The Totally Irrational Rigid Rotation}\label{proofs_bounded}\hfill\\

 We start by showing a simple perturbation result for the rigid rotation $R_{(\alpha,\beta)}$ on $\Bbb T^2$, 
where $(\alpha,\beta)$ is totally irrational.
\begin{prop}\label{Rigid_Case} 
	For any $\varepsilon>0$, there exists 
$\widetilde g\in \widetilde{ \text{Homeo}}_0(\Bbb T^2)$ 
with $\text{dist}_{C^0}(\widetilde g,R_{(\alpha,\beta)})<\varepsilon$,
such that, $\rho(\widetilde g)$ has interior, 
and $(\alpha,\beta) \in \text{Int}(\rho(\widetilde g))$.
\end{prop}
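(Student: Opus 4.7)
The plan is to realize $(\alpha,\beta)$ as an interior point of $\rho(\widetilde g)$ by constructing three disjoint $\varepsilon$-pseudo periodic orbits of $R_{(\alpha,\beta)}$ whose rotation vectors $v_1,v_2,v_3$ form a small triangle around $(\alpha,\beta)$, closing them simultaneously via Lemma~\ref{many-closing}, and invoking convexity of the rotation set (Theorem~\ref{rotationset}).

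The key ingredient is Diophantine: since $(\alpha,\beta)$ is totally irrational, $1,\alpha,\beta$ are $\Bbb Q$-linearly independent, so the subgroup
\begin{equation*}
\Gamma := \{\, n(\alpha,\beta) - (a,b) : n,a,b \in \Bbb Z \,\} \subset \Bbb R^2
\end{equation*}
has rank $3$ and is therefore dense in $\Bbb R^2$. Fix any three open cones $C_1,C_2,C_3\subset \Bbb R^2$ whose negatives are in pairwise distinct directions so that their convex hull contains $(0,0)$ in its interior. For any prescribed $\delta\in(0,\varepsilon)$, I can then choose triples $(n_i,a_i,b_i)$, $i=1,2,3$, with $n_i>0$ arbitrarily large, so that the error vectors $w_i := n_i(\alpha,\beta)-(a_i,b_i)$ satisfy $w_i\in C_i$ and $\|w_i\|<\delta$. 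Setting $v_i := (a_i/n_i, b_i/n_i) = (\alpha,\beta) - w_i/n_i$, by suitable choice of the $C_i$ the convex hull of $\{v_1,v_2,v_3\}$ contains $(\alpha,\beta)$ in its interior.

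Next I choose initial points $x_i^{(0)} \in \Bbb T^2$ in general position so that the three orbit segments $A_i := \{R^j(x_i^{(0)}) : j=0,\ldots,n_i-1\}$ are pairwise disjoint finite sets (a generic choice suffices). Each $A_i$ is an $\varepsilon$-pseudo periodic orbit of $R_{(\alpha,\beta)}$: all intermediate consecutive distances are zero, and the closing distance $\operatorname{dist}_{\Bbb T^2}(R^{n_i}(x_i^{(0)}), x_i^{(0)}) = \|w_i\| < \varepsilon$. Applying Lemma~\ref{many-closing} yields $g\in \text{Homeo}_0^+(\Bbb T^2)$ with $\text{dist}_{C^0}(g, R_{(\alpha,\beta)}) < \varepsilon$ and periodic points at each $x_i^{(0)}$ of period $n_i$. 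Lifting $g$ to the unique $\widetilde g$ that is $\varepsilon$-$C^0$-close to $\widetilde R(\widetilde x) = \widetilde x + (\alpha,\beta)$, the construction in Lemma~\ref{firstclosing} arranges the closing step so that in the lift the point $\widetilde x_i^{(0)} + n_i(\alpha,\beta)$ is pushed by the short path to $\widetilde x_i^{(0)} + (a_i,b_i)$; thus $\widetilde g^{n_i}(\widetilde x_i^{(0)}) = \widetilde x_i^{(0)} + (a_i,b_i)$ and the rotation vector of the closed orbit is exactly $v_i$. Therefore $\{v_1,v_2,v_3\}\subset \rho(\widetilde g)$, and by convexity of the rotation set the triangle they span lies in $\rho(\widetilde g)$, putting $(\alpha,\beta)$ in $\text{Int}(\rho(\widetilde g))$.

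The only nontrivial step is the Diophantine approximation in three prescribed directions; this would fail for a vector $(\alpha,\beta)$ having $1,\alpha,\beta$ $\Bbb Q$-dependent, since then $\Gamma$ would be concentrated in a line and one could only perturb the rotation vector within that line. Total irrationality is exactly what makes the three-directional approximation possible, and the rest of the argument is a clean application of the closing lemma together with the convexity of Misiurewicz--Ziemian rotation sets.
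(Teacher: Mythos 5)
Your proof is correct and takes essentially the same route as the paper: both exploit minimality/equidistribution of $R_{(\alpha,\beta)}$ to produce pseudo-periodic orbits whose closing vectors land in prescribed sectors around $(\alpha,\beta)$, close them simultaneously via Lemma~\ref{many-closing}, and conclude by convexity of the Misiurewicz--Ziemian rotation set. The only cosmetic differences are that you phrase the density fact in terms of the rank-$3$ subgroup $\Gamma$ rather than directly invoking minimality of the rotation, and you use three pseudo-orbits where the paper uses four (one per quadrant region $\Delta_0,\Delta_1,\Omega_0,\Omega_1$); both choices are equally valid.
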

\begin{proof} 
Since $(\alpha,\beta)$ is totally irrational, 
the rigid rotation $R_{(\alpha,\beta)}$ is minimal. 
For any $x_0\in \Bbb T^2$ and for any small $\varepsilon>0$, 
consider a small disk 
$B=B(x_0,\varepsilon)$, which is divided
into four regions as follows.
\begin{align}
\Delta_1(x_0,\varepsilon):      
& = \{x\in B(x_0,\varepsilon) 
\big| \text{pr}_{(\alpha,\beta)}(x-x_0)< 0, \text{pr}_{(-\beta,\alpha)}(x-x_0)<0\}. \label{new_delta_1}\\
\Delta_0 (x_0,\varepsilon):      
& = \{x\in B(x_0,\varepsilon) 
\big| \text{pr}_{(\alpha,\beta)}(x-x_0)> 0, \text{pr}_{(-\beta,\alpha)}(x-x_0)> 0\}.\\
\Omega_1(x_0,\varepsilon):   
& =  \{x\in B(x_0,\varepsilon) 
\big| \text{pr}_{(\alpha,\beta)}(x-x_0) < 0, \text{pr}_{(-\beta,\alpha)}(x-x_0)> 0\}.\\
 \Omega_0(x_0,\varepsilon):  
 & = \{x\in B(x_0,\varepsilon) 
 \big| \text{pr}_{(\alpha,\beta)}(x-x_0)> 0, \text{pr}_{(-\beta,\alpha)}(x-x_0)<0\}. \label{new_omega_0}
\end{align}
By minimality of  $R_{(\alpha,\beta)}$,
we can choose some integer $n$,
such that $R_{(\alpha,\beta)}^{n}(x_0) \in \Delta_1(x_0,\varepsilon)$. 
Then, for proper choices of 
lifts $\widetilde R$ and $\widetilde{x_0}$
 of $R_{(\alpha,\beta)}$ and $x_0$, respectively, 
 $\widetilde R^{n}(\widetilde x_0)$ is $\varepsilon$-close to 
$\widetilde x_0 +(a,b)$ for some $(a,b)\in \Bbb Z^2$. 
Moreover, we can write $v_1=(\frac{a}{n},\frac{b}{n})$,
and then clearly $v_1 \in \Delta_0((\alpha,\beta),\frac{\varepsilon}{n})$.

In other words, 
we can find an $\varepsilon$-pseudo periodic 
orbit for the rigid rotation $R_{(\alpha,\beta)}$,
which sees a rational rotation vector in the region 
$\Delta_0((\alpha,\beta),\frac{\varepsilon}{n})$.
We argue in a similar way with respect to 
the other three regions. Then, we obtain 
four $\varepsilon$-pseudo periodic orbits for $R_{(\alpha,\beta)}$,
starting with $x_0, y_0,z_0,w_0$, respectively. 
We can also require these orbit segments to be point-wise disjoint. Then, 
applying Lemma~\ref{firstclosing} four times,
these four pseudo orbits can be closed via 
an $\varepsilon$-perturbation, 
which produces four periodic orbits.  
These periodic orbits will 
have four rational rotation vectors $v_1,v_2,v_3,v_4$, respectively, 
whose convex hull contains $(\alpha,\beta)$
in the interior. Therefore $(\alpha,\beta)\in \text{Int}(\rho(\widetilde g))$. 
\end{proof}

\begin{Remark} 
This proposition can be compared with 
Theorem 1 in~\cite{Karaliolios}, where 
with respect to sufficiently  
high regularity, due to the 
``KAM" phenomenon,
the perturbed rotation set either misses 
$(\alpha,\beta)$, or it equals $\{(\alpha, \beta)\}$, provided 
$(\alpha,\beta)$ satisfies certain Diophantine conditions.
\end{Remark}

\subsection{Bounded Deviations}\hfill\\

In this subsection, we assume that $\widetilde f$ has bounded deviation 
along the direction $(\alpha, \beta)$. We consider this case for the sake of completeness, but 
it is possible that it might not happen at all (cf. Theorem~\ref{generic_bounded_unbounded} and Question~\ref{Question_unbounded}). 
 \begin{theorem}\label{Bounded_Deviation_Case}
 Suppose $\widetilde f\in \widetilde{\text{Homeo}}_0(\Bbb T^2)$, whose rotation set
 $\rho(\widetilde f)$ is the segment from $(0,0)$ to the totally irrational point $(\alpha,\beta)$. 
 Assume $\widetilde f$ has bounded deviation along the direction $(\alpha,\beta)$. Then $\widetilde f$ can be $C^0$-approximated by $\widetilde g\in  \widetilde{\text{Homeo}}_{0}(\Bbb T^2)$,
 such that 
 $\rho(\widetilde g)$ has interior,
  and 
 $\rho(\widetilde f)\backslash\{(0,0)\} \subset \text{Int} (\rho(\widetilde g) )$.
 \end{theorem}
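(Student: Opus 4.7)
The plan is to reduce to the rigid rotation case already handled in Proposition~\ref{Rigid_Case}, using Jager's semi-conjugacy (Lemma~\ref{jager}) to transport perturbations.

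First I would produce an $f$-minimal set $K \subset \Bbb T^2$ with $\rho(\widetilde{f}, K) = \{(\alpha,\beta)\}$ on which $\widetilde f|_{\pi^{-1}(K)}$ has bounded deviation in every direction. Three of the four cardinal directions are handled by what we already know: bounded deviation along $(\alpha,\beta)$ is the hypothesis, and along the two perpendicular directions $(\pm\beta,\mp\alpha)$ by Theorem~\ref{Fabio_Bounded}. The direction $-(\alpha,\beta)$ fails globally, because $(0,0)\in \rho(\widetilde f)$ allows orbits with arbitrarily slow displacement. Taking an ergodic measure $\mu$ with $\rho_\mu(\widetilde f)=(\alpha,\beta)$ (which exists by extremality, Theorem~\ref{rotationset}) and a minimal subset $K\subset\text{supp}(\mu)$, a Gottschalk--Hedlund-style argument applied to the continuous cocycle $\varphi(x) := \text{pr}_{(\alpha,\beta)}(\widetilde f(\widetilde x)-\widetilde x) - (\alpha^2+\beta^2)$ (which has zero $\mu$-mean and ergodic sums uniformly bounded above) upgrades the one-sided global bound to a uniform two-sided bound on $K$.

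By Lemma~\ref{jager} there is then a regular semi-conjugacy $\phi : K \to \Bbb T^2$ between $f|_K$ and $R_{(\alpha,\beta)}$; equivalently, a bounded continuous $\Bbb Z^2$-periodic function $\eta$ with
\[
\widetilde f^{n}(\widetilde x) - \widetilde x \;=\; n(\alpha,\beta) + \eta(\widetilde x) - \eta(\widetilde f^{n}(\widetilde x))
\]
for every $\widetilde x\in \pi^{-1}(K)$ and $n\geq 0$. From the proof of Proposition~\ref{Rigid_Case} one reads off, in the rigid rotation $R_{(\alpha,\beta)}$, four pairwise disjoint $\varepsilon$-pseudo-periodic orbits whose rational rotation vectors $v_1, v_2, v_3, v_4$ lie in the four regions $\Delta_0, \Delta_1, \Omega_0, \Omega_1$ from \eqref{region1}--\eqref{region4}. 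The coboundary identity above lets me lift these to four pairwise disjoint $\varepsilon$-pseudo-periodic orbits of $f$ inside $K$ realizing the same rotation vectors: an $f$-return of $x_0 \in K$ to a small neighbourhood in $K$ produces an integer displacement $(a_n,b_n)$ with $(a_n,b_n) - n(\alpha,\beta) = \eta(x_0) - \eta(f^n(x_0)) + O(\varepsilon)$; by choosing $x_0 \in K$ so that $\eta$ is locally non-degenerate and using minimality of $f|_K$, I realize each of the four prescribed signs of the projections along $(\alpha,\beta)$ and $(-\beta,\alpha)$.

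Lemma~\ref{many-closing} then closes all four $\varepsilon$-pseudo-periodic orbits simultaneously, producing $\widetilde g \in \widetilde{\text{Homeo}}_0^+(\Bbb T^2)$ with $\text{dist}_{C^0}(\widetilde g,\widetilde f)<\varepsilon$ and periodic orbits realizing $v_1,\ldots,v_4$. Because $(0,0)$ is extremal in $\rho(\widetilde f)$, $f$ has a fixed point; performing the closing perturbation far from it preserves that fixed point, so $(0,0) \in \rho(\widetilde g)$. By convexity of the rotation set (Theorem~\ref{rotationset}), $\rho(\widetilde g) \supset \text{conv}\{(0,0), v_1, v_2, v_3, v_4\}$, a pentagon whose interior contains the open segment $\rho(\widetilde f)\setminus\{(0,0)\}$, as required. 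I expect the main obstacle to be the transport step: since $\phi$ need not be injective, realizing $f$-pseudo-periodic orbits in all four regions requires a careful non-degeneracy argument for $\eta$ at a well-chosen $x_0\in K$, in combination with the minimality of $f|_K$. The upstream construction of the good minimal set $K$, upgrading the one-sided global bound to a uniform two-sided bound on a minimal set, is the secondary technical point.
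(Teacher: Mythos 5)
Your high-level strategy coincides with the paper's: pass to a minimal set $K$ inside $\mathcal S_{(\alpha,\beta)}$, upgrade the bounded-deviation hypothesis to a two-sided bound on $K$ (which is exactly Lemma~\ref{geq_-M}, proved via Atkinson's lemma), invoke Lemma~\ref{jager} to get the semi-conjugacy $\phi$, pull $R_{(\alpha,\beta)}$-pseudo-periodic data back through $\phi$, and then close with Lemma~\ref{many-closing}. The coboundary identity you write down for $\eta$ is correct (up to sign convention), and the final convexity argument is fine, although you over-target: the paper shows it suffices to realize two rational vectors, one in $\Omega_0$ and one in $\Delta_0$, and then take the triangle with the fixed point's vector $(0,0)$ as third vertex.

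The genuine gap is exactly the step you flag as the ``main obstacle'': transporting base returns to fiber returns. Your coboundary identity controls the displacement of an $f$-near-return \emph{once one exists}, but it does not produce one. When $R^{n_k}(x_0) \to x_0$, the map $f^{n_k}$ sends the compactum $\phi^{-1}(x_0)$ Hausdorff-close to itself, yet a given point $p_0 \in \phi^{-1}(x_0)$ can be thrown far across the fiber, so $f^{n_k}(p_0)$ need not return near $p_0$ along any subsequence of the $n_k$ you chose in the base. The phrase ``choose $x_0$ so that $\eta$ is locally non-degenerate and use minimality of $f|_K$'' does not resolve this: $\eta$ is $\phi^{-1}$-fiberwise arbitrary, there is no non-degeneracy to exploit in general, and minimality of $f|_K$ only gives recurrence of \emph{some} orbit, not recurrence of a prescribed point along the prescribed base-return times $n_k$. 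The paper resolves this with a dedicated device (Proposition~\ref{good_point_inthe_fiber}): the nested limit sets $\Sigma^{(\ell)}(f,p)$ obtained from compositions of the base-return times, together with compactness of the fiber $\phi^{-1}(x_0)$ and a pigeonhole count, produce a point $p_0 \in \phi^{-1}(x_0)$ and an integer $\ell$ with $p_0$ close to $\Sigma^{(\ell)}(f,p_0)$; crucially, the composed return time $n_k^{(1)}+\cdots+n_k^{(\ell)}$ still gives a displacement in $\Omega_0$ because each summand does (the displacement regions $\Omega_0,\Delta_0$ are closed under addition up to the base point), and the finite-cover normalization $C_\phi<\tfrac16$ guarantees the integer displacement seen by $\widetilde f$ at $\widetilde p_0$ coincides with the one seen by $\widetilde R$ at $\widetilde x_0$. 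Without this (or an equivalent substitute), the reduction to Proposition~\ref{Rigid_Case} does not go through.
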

 
 Assume for some $M>0$, for any $\widetilde x \in \Bbb R^2$
  and any $n\geq 1$,
 \begin{equation}\label{Bounded_by_M}
 \text{pr}_{(\alpha,\beta)}( \widetilde f^n (\widetilde x) -\widetilde x -n(\alpha,\beta)) \leq M. 
 \end{equation}
   \begin{definition}\label{Mab_Sab}
   Let $\mathcal M_{(\alpha,\beta)}$ 
   denote the set of ergodic $f$-invariant Borel probability measures, 
   which have $(\alpha,\beta)$ as rotation vector.
   Then write 
   \begin{equation}
   \mathcal S_{(\alpha,\beta)} :=
   \overline {\bigcup_{\mu \in \mathcal M_{(\alpha,\beta)}} \text{supp}(\mu)},
   \end{equation} 
   where $\text{supp}(\mu)$ denotes the support of $\mu$.
\end{definition}
The following Lemma, 
whose proof depends on Atkinson's theorem on Cocycles (see~\cite{Atkinson}),
appears as Lemma 6 of~\cite{Salvador_Uniform} or Proposition 65 of~\cite{Forcing}. 
\begin{lemma}\label{geq_-M}
   Suppose $\widetilde f$ satisfies condition (\ref{Bounded_by_M}).
    Then for any $x\in \mathcal S_{(\alpha,\beta)}$ with a lift $\widetilde x$,
    and for any $n\geq 1$,
   \begin{equation}\label{Bounded_Below}
   \text{pr}_{(\alpha,\beta)} \big(\widetilde f^n(\widetilde x) -\widetilde x -n(\alpha,\beta) \big) \geq -M.
      \end{equation}
In particular, 
     any invariant ergodic measure $\mu$ such that $\text{supp}(\mu) \subset \mathcal S_{(\alpha,\beta)}$ 
    is contained in $\mathcal M_{(\alpha,\beta)}$. 
   \end{lemma}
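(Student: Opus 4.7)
The strategy is to use Atkinson's Lemma (Theorem~\ref{Atkinson}) together with the cocycle identity to convert the upper deviation bound $(\ref{Bounded_by_M})$ into a lower bound along recurrent orbits of measures in $\mathcal{M}_{(\alpha,\beta)}$.

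The first step is to set up the right observable. Define $\varphi : \mathbb{T}^2 \to \mathbb{R}$ by $\varphi(x) := \text{pr}_{(\alpha,\beta)}(\widetilde{f}(\widetilde{x}) - \widetilde{x}) - (\alpha^2 + \beta^2)$, which is well defined and continuous since the displacement function descends to $\mathbb{T}^2$. Its Birkhoff sums equal the normalized cocycle
\begin{equation*}
\mathcal{A}^{(n)}(x) = \text{pr}_{(\alpha,\beta)}\bigl(\widetilde{f}^n(\widetilde{x}) - \widetilde{x} - n(\alpha,\beta)\bigr).
\end{equation*}
For every $\mu \in \mathcal{M}_{(\alpha,\beta)}$ we have $\int \varphi \, d\mu = \text{pr}_{(\alpha,\beta)}(\rho_\mu(\widetilde{f})) - (\alpha^2+\beta^2) = 0$, so Atkinson's Lemma is applicable.

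The heart of the argument is then the following. Pick $\mu \in \mathcal{M}_{(\alpha,\beta)}$ and a $\mu$-typical point $y$; by Theorem~\ref{Atkinson} there exists $n_k \to \infty$ with $f^{n_k}(y) \to y$ and $\mathcal{A}^{(n_k)}(y) \to 0$. For any fixed $n_0 \geq 1$ and $k$ large enough so that $n_k > n_0$, the cocycle identity gives
\begin{equation*}
\mathcal{A}^{(n_0)}(y) \;=\; \mathcal{A}^{(n_k)}(y) \;-\; \mathcal{A}^{(n_k - n_0)}\bigl(f^{n_0}(y)\bigr).
\end{equation*}
The hypothesis $(\ref{Bounded_by_M})$ bounds $\mathcal{A}^{(n_k-n_0)}(f^{n_0}(y)) \leq M$, so letting $k\to\infty$ yields $\mathcal{A}^{(n_0)}(y) \geq -M$. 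Since $\mu$-typical points are dense in $\text{supp}(\mu)$ and $\mathcal{A}^{(n_0)}$ is continuous on $\mathbb{T}^2$, the bound extends to $\text{supp}(\mu)$, and then to $\mathcal{S}_{(\alpha,\beta)}$ by taking the union over $\mu \in \mathcal{M}_{(\alpha,\beta)}$ followed by closure; this proves $(\ref{Bounded_Below})$.

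For the ``in particular'' clause, let $\nu$ be an ergodic $f$-invariant probability measure with $\text{supp}(\nu) \subset \mathcal{S}_{(\alpha,\beta)}$. Since $\rho_\nu(\widetilde{f}) \in \rho(\widetilde{f})$ and the latter is the segment from $(0,0)$ to $(\alpha,\beta)$, we may write $\rho_\nu(\widetilde{f}) = t(\alpha,\beta)$ with $t \in [0,1]$. Birkhoff applied to a $\nu$-typical point $y$ gives $\mathcal{A}^{(n)}(y)/n \to (t-1)(\alpha^2+\beta^2)$. On the other hand, the already established lower bound $\mathcal{A}^{(n)}(y) \geq -M$ for $y \in \mathcal{S}_{(\alpha,\beta)}$ forces this limit to be $\geq 0$, hence $t = 1$ and $\nu \in \mathcal{M}_{(\alpha,\beta)}$. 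The only subtle point in the whole argument is ensuring that Atkinson's recurrences satisfy $n_k > n_0$, but this is automatic since $n_k \to \infty$; no substantial obstacle is expected beyond correctly bookkeeping the cocycle identity.
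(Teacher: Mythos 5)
Your proof is correct and takes essentially the same route as the paper: apply Atkinson's lemma to a typical point of each $\mu\in\mathcal M_{(\alpha,\beta)}$, use the cocycle identity together with the upper bound $(\ref{Bounded_by_M})$ to deduce $\mathcal A^{(n)}(y)\geq -M$, and then extend to all of $\mathcal S_{(\alpha,\beta)}$ by density and closedness of the inequality. The only cosmetic difference is that you make the observable $\varphi$ and the segment parametrization $\rho_\nu=t(\alpha,\beta)$ explicit, whereas the paper invokes these steps more tersely.
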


        \begin{proof}[Proof of Theorem~\ref{Bounded_Deviation_Case}]
        Fix $\varepsilon>0$. Choose any minimal set $K\subset \mathcal S_{(a,b)}$.
        	Then $\rho(\widetilde f, K)=\{(\alpha,\beta)\}$. 
        	By assumption (\ref{Bounded_by_M}), 
        	Theorem~\ref{Fabio_Bounded} and Lemma~\ref{geq_-M}, 
        	$\widetilde f\big |_{\pi^{-1}(K)}$ 
        	has bounded deviation along every direction. 
        	So we can apply Lemma~\ref{jager} in order to 
        	find a semi-conjugacy $\phi$
        	between $(K,f\big|_K)$ and 
        	$(\mathbb T^2,R)$, where $R= R_{(\alpha,\beta)}$ 
        	denotes the rigid rotation on $\mathbb T^2$ by $(\alpha,\beta)$.
         There is a lift $\widetilde \phi$ of $\phi$,
        	which conjugates 
        	$\widetilde f\big|_{\pi^{-1}(K)}$ and $\widetilde R$.
	Note that the pre-image of every point under $\widetilde \phi$
        	has diameter uniformly bounded from above, say by a constant $C_\phi>0$.
	Up to renormalizing the torus to a finite cover, we can assume that 
	\begin{equation}\label{bounded_diameter}
	C_\phi<1/6.
	\end{equation}	
	 Then, there exists a positive integer $n_0$, such that for any $n\geq n_0$, the following can be ensured. For any $x\in\Bbb T^2$ and its pre-image $\gamma=\phi^{-1}(x)$, 
	 and for any points $p\in \gamma$ and $q\in f^n(\gamma)$, 
	 one can find an $\varepsilon$-pseudo orbit segment of length $n$, 
	starting at $p$ and ending at $q$, such that every jump happens within a leaf $\phi^{-1}(f^k(\gamma))$, for $k\geq 1$. 
	
	Recall the four regions defined  from (\ref{new_delta_1}) to 
        	(\ref{new_omega_0}).
        	    	Since $R= R_{(\alpha,\beta)}$ 
        	is minimal, for some sufficiently large positive integer $n > n_0$, 
	and for a point $x_0\in \Bbb T^2$ with its pre-image $\gamma_0=\phi^{-1}(x_0)$, 
	we have that  
        	\begin{align}
        		 & R^{n}(x)\in \Omega_1(x,\varepsilon). \label{belong_to_omega_1}\\
		 &  \text{dist}_{\Bbb T^2}(f^n(\gamma_0), \gamma_0)<\varepsilon. \label{gamma_0_close}
			    	\end{align}
           Estimate (\ref{belong_to_omega_1}) means that, by choosing a lift $\widetilde x_0$ of $x_0$, and writing $(a,b)= [\widetilde R^{n}(\widetilde x_0) - \widetilde x_0]$, we have  
        	\begin{equation}\label{vk_definition}
        		v= \frac{[\widetilde R^{n}(\widetilde x_0) - 
        			\widetilde x_0]}{n}= (\frac{a}{n}, \frac{b}{n}) \in \Omega_0.
        	\end{equation}
        	Equivalently (see Figure~\ref{the_four_regions}),
        	\begin{align}
        		\text{pr}_{(\alpha,\beta)} \big (v-(\alpha,\beta) \big)   > 0. \label{pr_alphabeta}\\
        		\text{pr}_{(-\beta,\alpha)} \big (v-(\alpha,\beta) \big)  < 0. \label{pr_betaalpha}
        	\end{align}          

On the other hand, by estimate (\ref{gamma_0_close}) and noting $n> n_0$, 
           we can find an $\varepsilon$-pseudo periodic orbit, containing a point 
           $p\in\gamma_0$ so that for some $q\in f^n(\gamma_0)$, 
           \begin{equation}
           \text{dist}_{\Bbb T^2}(p, q)= 
           \text{dist}_{\Bbb T^2}(\gamma_0,f^n(\gamma_0))<\varepsilon.
           \end{equation} 
           This pseudo-orbit is obtained in the following way. 
           For each $k=1,\cdots,n-1$, the orbit has a jump within 
           the pre-image $\phi^{-1}(f^k(\gamma_0))$, so that 
           at the $(n-1)$-th time, it arrives at $f^{-1}(q)$. Then the final jump
           happens from $q$ to $p$. 
                   This pseudo-orbit lifts to 
                   an $\varepsilon$-pseudo orbit for $\widetilde f$ in $\Bbb R^2$. 
                   Recall from (\ref{bounded_diameter}) that, 
                   the pre-image under the lifted semi-conjugacy $\widetilde \phi$ of any point
                   has diameter bounded by $1/6$. It follows that the pseudo-orbit must see the same 
                   integer translate as the rigid rotation. More precisely,         
                   the $\varepsilon$-pseudo-orbit above must start at some 
                   point $\widetilde p$ and end at $\widetilde p+(a,b),$ so that 
                   it sees the rotation vector $v=(\frac an,\frac bn)  \in \Omega_0$. 
	
          In a similar way, we can find another $\varepsilon$-pseudo $f$-periodic orbit  which sees 
	 a rational rotation vector in $\Delta_0$.
	 Moreover, as we explained before, there is also at least one contractible fixed point $p_\ast$.
         Note that we can choose the two pseudo orbits and the fixed point to be pairwise disjoint. 
			So, if we apply Lemma~\ref{firstclosing} twice, we obtain 
        	an $\varepsilon$-perturbation 
        	$\widetilde g\in 
        	\widetilde{\text{Homeo}}_0
        	(\mathbb T^2)$,
        	with at least three periodic orbits, 
        	$p_\ast$,$p_0$, $u_0$, 
        	such that,
        	$\rho(\widetilde g,p_\ast)=0, 
        	\rho(\widetilde g,p_0)=v$ and $\rho(\widetilde g,u_0)=u$
        	span a triangle which  contains $\rho(\widetilde f)\backslash \{(0,0)\}$
        	in its interior. By Theorem~\ref{rotationset},
	this triangle is clearly included in $\rho(\widetilde g)$, 
	and the proof of the Theorem is completed.
        \end{proof}
        
        \begin{proof}[Proof of Theorem~\ref{instability_general}]
        	Theorem~\ref{instability_general} follows immediately from 
        	Theorem~\ref{instability} and Theorem~\ref{Bounded_Deviation_Case}.
        \end{proof}

\section{Generic Diffeomorphisms}

In this section, we prove the following theorem.

\begin{theorem}
\label{generic_not_this_set}[Theorem~\ref{generic_case} restated] Let $f\in 
\mathcal G^r$. Then for any lift $\widetilde{f}$, the rotation set $\rho ( 
\widetilde{f})$ can not be a segment from $(0,0)$ to a totally irrational
point $(\alpha ,\beta )$.
\end{theorem}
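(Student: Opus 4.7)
The plan is to argue by contradiction: assume $f\in\mathcal{G}^r$ admits a lift $\widetilde f$ whose rotation set is exactly the segment from $(0,0)$ to a totally irrational vector $(\alpha,\beta)$, and derive an inconsistency with either the no-eigenvalue-$1$ condition or the no-saddle-connection condition. Since $(0,0)$ is a rational extremal point of $\rho(\widetilde f)$, Franks' realization theorem produces an $\widetilde f$-fixed point $\widetilde p_0$. As $Df(p_0)$ has no eigenvalue equal to $1$ and $f$ is non-wandering (forbidding sinks and sources), $p_0$ is either a hyperbolic saddle or an elliptic fixed point; the same dichotomy holds for every periodic point by applying the condition to the relevant iterate. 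In particular $\mathrm{Fix}(f)$ is finite, hence inessential, so Lemma~\ref{Bounded Invariant Disks} applies and every $f$-invariant topological disk lifts to planar components of uniformly bounded diameter. Combined with Theorem~\ref{Fabio_Bounded}, every orbit of $\widetilde f$ in the plane is confined to a strip of bounded width parallel to $(\alpha,\beta)$.

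The next step is to extract an invariant compact set and apply prime ends theory. By Theorem~\ref{rotationset} there is an ergodic $f$-invariant measure $\mu$ with $\rho_\mu(\widetilde f)=(\alpha,\beta)$; let $K$ be a minimal set contained in $\mathrm{supp}(\mu)$. The set $K$ is $f$-invariant and its totally irrational rotation vector, together with the strip confinement from the first paragraph, prevents $K$ from being inessential. Consider any $\widetilde f^n$-invariant connected component $O$ of the complement (in $\mathbb{R}^2$) of the lift of an essential filling of $K$, regarded as an open topological disk after one-point compactification, and let $\rho_{\mathrm{PE}}(\widetilde f^n,O)$ be its prime ends rotation number. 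By Lemma~\ref{primeends} (i), if $\rho_{\mathrm{PE}}$ were rational then $\partial O$ would have to contain saddle-like periodic points joined by saddle connections, directly contradicting $f\in\mathcal{G}^r$. Hence every such $\rho_{\mathrm{PE}}$ is irrational, and Lemma~\ref{primeends} (ii) implies that $\partial O$ contains no $\widetilde f^n$-periodic points.

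The final step is to use the structural results of Oliveira on invariant branches of periodic saddles in order to produce a periodic orbit whose rotation vector lies off the segment $\rho(\widetilde f)$. Fix a hyperbolic periodic saddle $p$ (either $p_0$ itself, if hyperbolic, or, if $p_0$ is elliptic, a saddle extracted from the dynamics inside the minimal set via a Birkhoff-style recurrence argument using Lemma~\ref{jager}). Oliveira's description of invariant branches, combined with the strip confinement, forces an unstable branch of $\widetilde p$ to be unbounded and to accumulate on some integer translate $\widetilde p + (a,b)$ with $(a,b)\ne(0,0)$. Using the absence of periodic points on the prime-ends boundaries from the previous step, and the non-wandering nature of $f$, this branch must then meet a stable branch of $\widetilde p + (a,b)$ in a topologically transverse way. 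Lemma~\ref{create_topological_horseshoe}, together with Remark~\ref{saddle_like_points_horseshoe}, produces a periodic orbit with rotation vector $(a/N,b/N)$; since $(\alpha,\beta)$ is totally irrational, no nonzero integer vector can lie on the line through $(0,0)$ and $(\alpha,\beta)$, so $(a/N,b/N)$ is not on $\rho(\widetilde f)$, the desired contradiction.

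The main obstacle is the topologically transverse intersection at the last step: converting Oliveira's qualitative description of branch behaviour into a genuine crossing (rather than a merely tangential accumulation) requires delicate planar topology, leveraging both the absence of saddle connections on prime-ends boundaries and the uniform boundedness of invariant disks from Lemma~\ref{Bounded Invariant Disks}. Ruling out the degenerate alternative, in which an unstable branch asymptotes along $\partial O$ without ever transversally meeting a translate of a stable branch, is where the bulk of the technical work is expected to lie, and it is the point at which all three ingredients---prime ends, the bounded disk lemma, and Oliveira's theorem---must be combined simultaneously.
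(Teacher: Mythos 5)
Your proposal shares the paper's overall strategy---locate a fixed hyperbolic saddle, study its invariant branches via Oliveira's theory, and force a non-contractible periodic orbit via Lemma~\ref{create_topological_horseshoe} (contradicting the assumed shape of $\rho(\widetilde f)$)---and you correctly flag the transverse intersection as a delicate point. However, two earlier steps have genuine gaps that the paper fills with arguments you omit entirely.

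First, the existence of the hyperbolic saddle is not established by your sketch. Franks' realization theorem gives you some fixed point $p_0$, which may perfectly well be elliptic. Your fallback---``a saddle extracted from the dynamics inside the minimal set via a Birkhoff-style recurrence argument using Lemma~\ref{jager}''---does not work: Lemma~\ref{jager} requires bounded deviation in \emph{all} directions, which you have not established (and which the paper cannot establish in general under the hypotheses of Theorem~\ref{generic_case}; see Section~7, which only gets unbounded deviation under an extra foliation hypothesis), and even if it applied, a semi-conjugacy of a minimal set to a rigid rotation produces no periodic points at all, let alone a saddle. The paper's route to the saddle is quite different: it saturates an escaping recurrent point into a fully essential invariant open set $U$, uses Lemma~\ref{primeends} to see that the prime ends rotation number of every maximal $f$-periodic open disk is irrational (so the fixed-point indices inside the finitely many maximal disks sum to a nonnegative quantity), and then invokes Lefschetz (Lemma~\ref{Lefschetz}) to force a negative-index fixed point in the complement of all such disks; only then does one have a hyperbolic saddle $Q_\ast$ with the crucial extra property that it lies on no periodic disk.

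Second, you cannot pass from ``Oliveira's description of invariant branches, combined with the strip confinement'' directly to an unstable branch of $\widetilde p$ accumulating on an integer translate of the stable branch and meeting it transversely. The paper must first prove Proposition~\ref{stable_unstable} (no intersection between stable and unstable branches of $\widetilde Q_\ast$, using Lemma~\ref{create_topological_horseshoe} and Lemma~\ref{primeends}), then Lemma~\ref{stable_unbounded} (all four branches unbounded in $\Bbb R^2$, a nontrivial adaptation of Oliveira's argument that depends on Proposition~\ref{stable_unstable}), and then Proposition~\ref{same_closure} (the four projected branches have the same closure in $\Bbb T^2$, whose complementary components are periodic disks). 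Only after this machinery is in place do the arguments from Oliveira, pages 591--594, apply to force the needed intersection. Your sketch compresses this entire structure into a single assertion, and the concluding acknowledgment that ``this is where the bulk of the technical work is expected to lie'' significantly understates the situation: what is missing is not a refinement of an otherwise-sound argument, but its core.
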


{\it Proof. }The proof of this theorem will go through the whole section. We
assume the following conditions and arrive at a contradiction in the end of
the proof. 
\begin{align}\label{Gr_condition} 
&  f\in \mathcal G^r.\\
&  \rho(\widetilde f) \text{ is the segment from} (0,0) \text{ to the totally irrational point } (\alpha,\beta). \label{rotation_set_assumption}
\end{align}

Since $(\alpha,\beta)$ is an extremal point of $\rho(\widetilde f)$, by
Theorem~\ref{rotationset}, we can choose an $f$-recurrent point $z_\ast$,
such that 
\begin{equation}
\label{choice_of_zast} \rho (\widetilde f,z_\ast)=\lim_{n\to \infty}\frac1n 
\big(\widetilde f^n(\widetilde{z}_\ast ) -\widetilde z_\ast \big)=
(\alpha,\beta). 
\end{equation}

Now, consider the family of all the $f$-invariant open topological disks. We
can define a partial order among this family with respect to the usual inclusion
relation. It is standard to check that with respect to this order, the
family forms a partially ordered set, for which every chain has an upper
bound. So by Zorn's lemma, we conclude the existence of maximal elements. As 
$f$ is non-wandering, Lemma~\ref{Bounded Invariant Disks} implies that there
exists a constant $M>0$, such that, every connected component $\widetilde{D}$
of the lift of a maximal open $f$-invariant disk $D$ is $\widetilde{f}$%
-invariant, and 
\begin{equation}
\text{diam}(\widetilde{D})<M. 
\end{equation}
Also, if $D$ is an $f$-invariant maximal open topological disk, then it
contains fixed points. This follows from a classical argument: pick some
point $p\in D$. If $p$ is not fixed, then for a sufficiently small open ball 
$B$ centered at $p$, contained in $D$, we have: $B$ is disjoint from $f(B)$
and $f^n(B)$ intersects $B$ for a sufficiently large $n>0$ (there are no
wandering points). And this implies the existence of a fixed point inside $D,$
see Lemma \ref{Brouwer}. Since for $f\in \mathcal G^r$, there are finitely
many fixed points, it follows that there are at most finitely many
maximal open $f$-invariant disks.

Note that $z_{*}$ is disjoint from the closure of the union of these
finitely many maximal open $f$-invariant disks, because the orbit of any $
\widetilde{z}_{*}\in \pi ^{-1}(z_{*})$ is unbounded in $\Bbb R^2$. Thus, we
can choose some $\delta >0$ such that, the open disk $B(z_{*},\delta )$ is
still disjoint from the closure of the union of these maximal open $f$%
-invariant disks. Define the set 
\begin{equation}
\label{defineu}
U:=\text{ the connected component of }\bigcup_{n\in \Bbb Z}f^n\big(%
B(z_{*},\delta )\big)
\text{ which contains }z_{*}. 
\end{equation}

The following lemma allows us to find a good hyperbolic saddle fixed point.

\begin{lemma}
\label{Saddle_Essential} There exists at least one fixed hyperbolic
saddle point $Q_{*}$ which is contained in $\overline{U}$.
\end{lemma}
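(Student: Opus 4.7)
The plan is to argue by contradiction: assume $\overline{U}$ contains no $f$-fixed hyperbolic saddle. Since $f\in\mathcal{G}^r$ is non-wandering and no $Df$ at a fixed point has eigenvalue $1$, every $f$-fixed point has index $\pm 1$ (hyperbolic saddle or elliptic), so under the assumption all $f$-fixed points in $\overline{U}$ are elliptic of index $+1$.

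First I would pin down the topology of $U$. Because the $\widetilde{f}$-orbit of $\widetilde{z}_\ast$ has rotation vector $(\alpha,\beta)\neq(0,0)$, it is unbounded in $\mathbb{R}^2$, so the component $\widetilde{U}\subset\pi^{-1}(U)$ containing $\widetilde{z}_\ast$ is unbounded, hence $U$ is essential in $\mathbb{T}^2$. Moreover $U$ is not simply connected: otherwise $U$ would be an $f$-invariant open topological disk, therefore contained in some maximal $f$-invariant disk $D$, contradicting the choice of $\delta$ which makes $B(z_\ast,\delta)$ disjoint from $\overline{D}$. Thus $U$ is an essential, non-disk, $f$-invariant open set, and $\mathrm{Fix}(f)$ is inessential (being finite, since $f\in\mathcal{G}^r$), so Lemma~\ref{Bounded Invariant Disks} applies: lifts of maximal $f$-invariant disks have uniformly bounded diameter.

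Second I would extract a connected component $V$ of $\mathbb{T}^2\setminus\overline{U}$ that is, after passing to some iterate $f^k$, an $f^k$-invariant inessential topological disk. Non-simple-connectivity of $U$ supplies a simple loop $\gamma\subset U$ bounding a disk $\Sigma\subset\mathbb{T}^2$ with $\Sigma\not\subset U$; then $\Sigma$ contains an inessential component $V_0$ of $\mathbb{T}^2\setminus U$. Because $f\in\mathcal{G}^r$ has only finitely many periodic points of each period, a pigeonhole argument on the $f$-permutation of the inessential components of $\mathbb{T}^2\setminus\overline{U}$ (together with non-wandering to rule out wandering components) yields such an $f^k$-invariant $V\subset\Sigma$.

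Third I would invoke Lemma~\ref{primeends} with $K=\partial V$ and $O=V$, viewing $V$ as an $\widetilde{f}^k$-invariant topological disk after a one-point compactification. In case (i), $\rho_{\mathrm{PE}}(\widetilde{f}^k,V)$ rational, Lemma~\ref{primeends}(i) yields saddle-like $\widetilde{f}^k$-periodic points on $\partial V\subset\overline{U}$ together with saddle connections between them, which directly contradicts condition (2) of Definition~\ref{Gr_definition}. In case (ii), $\rho_{\mathrm{PE}}$ irrational, a lifted component $\widetilde{V}$ is bounded (as $V$ is inessential) and $\widetilde{f}^k$-invariant (because total irrationality of $(\alpha,\beta)$ rules out any nontrivial deck translation $\widetilde{f}^k(\widetilde{V})=\widetilde{V}+(a,b)$); Brouwer's fixed-point theorem then forces an $\widetilde{f}^k$-fixed point inside $\widetilde{V}$, placing $V$ inside a maximal $f^k$-invariant disk, and combining with the bounded-deviation information of Theorem~\ref{Fabio_Bounded} and the disjointness of $U$ from maximal invariant disks leads to a conflict with $\partial V\subset\overline{U}$ meeting the orbit closure of $z_\ast$.

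The main obstacle is twofold: making $V$ truly $f$-invariant (not merely $f^k$-invariant) so that the saddle-like periodic point produced is $f$-fixed as the statement demands, and cleanly ruling out the irrational case (ii). I would handle the first point by selecting $V$ among the distinguished $f$-orbit of components bounding the essential "filling" of $U$, using that any such fixed period must divide a common integer determined by the finitely many periodic points of the relevant period; the second point reduces to showing $V$ cannot simultaneously be contained in a maximal $f$-invariant disk and have its boundary $\partial V$ meet an orbit with irrational rotation vector $(\alpha,\beta)$.
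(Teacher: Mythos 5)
Your approach diverges from the paper's in a way that opens a genuine gap. The paper's mechanism is a Lefschetz index count: it shows $U$ is fully essential, observes that every maximal $f$-invariant open disk has irrational prime-ends rotation number (so the fixed-point index sum inside each such disk is $+1$), and then invokes Lemma~\ref{Lefschetz} — the total index over $\text{Fix}(f)$ is zero — to force the existence of a fixed point of index $-1$ lying \emph{outside} all maximal invariant disks. Since any fixed point not in $\overline{U}$ sits in an $f$-invariant component of $\mathbb{T}^2\setminus\overline{U}$ whose fill is an invariant disk, that negatively indexed point must lie in $\overline{U}$, and by the $\mathcal{G}^r$ hypothesis it is a hyperbolic saddle. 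You never invoke Lefschetz at all, and your proposal does not contain any substitute for this global counting step.

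Concretely, the gap is in your case~(ii). Having produced an $f^k$-invariant inessential component $V$ of $\mathbb{T}^2\setminus\overline{U}$ with irrational prime-ends rotation number, you obtain a Brouwer fixed point inside $V$ and conclude $V$ lies in a maximal $f^k$-invariant disk, then claim a ``conflict with $\partial V\subset\overline{U}$ meeting the orbit closure of $z_\ast$.'' There is no conflict: it is perfectly consistent for $V$ to be contained in a maximal invariant disk while its boundary lies in $\overline{U}$ — a maximal disk and $\overline{U}$ may certainly share boundary points, and nothing forces $\partial V$ to meet the orbit closure of $z_\ast$ (the orbit of $z_\ast$ lives in $U$, not on $\partial U$). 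Theorem~\ref{Fabio_Bounded} gives bounded deviation in the perpendicular direction and has no apparent bearing here. Moreover, case~(i) is already ruled out by the no-saddle-connection clause of $\mathcal{G}^r$ alone, independently of your contradiction hypothesis ``no saddle in $\overline{U}$''; so neither branch of your dichotomy actually engages the hypothesis you were trying to contradict, which means the contradiction structure is broken at the root. The argument cannot be repaired by tightening the estimates in case~(ii); you need the index-counting idea (or an equivalent global fixed-point argument) to locate a saddle in $\overline{U}$.
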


\begin{proof}

Clearly $U$ is open. Since $f$ is non-wandering, $U$ is 
$f^{n^\ast}$-invariant for some $n^\ast>0$. Recall the notions 
in subsection~\ref{notation1}, 
and claim that $U$ is essential. 
Suppose otherwise and let $U_{filled}$ be    
the union of $U$ with all the connected components of the
 complement of $U$ 
 which are contractible. This construction implies that $U_{filled}$ 
is an open disk and, by Lemma~\ref{Bounded Invariant Disks} applied to $f^{n^\ast}$,
  all connected components of the lift of $U_{filled}$ to the plane 
have bounded diameter and are $\widetilde{f}^{n^\ast}$-invariant 
(because $(0,0)$ is the only rational point contained in the rotation set).
 In particular, any lift of $z_\ast$ has bounded orbit, 
a contradiction with (\ref{choice_of_zast}). 

The next claim is that $U$ is in fact fully essential. 
Suppose it is not, then all the homotopically 
non-trivial loops contained in $U$ are homotopic 
to each other.  Fix one homotopically non-trivial 
loop $\gamma \subset U$ and choose connected components of 
their lifts, $\widetilde \gamma$ and $\widetilde U$, such that, 
\begin{equation}
\widetilde \gamma \subset \widetilde U.
\end{equation}
Clearly, there exists some integer vector $(a,b)\neq (0,0)$, such that $\widetilde \gamma=\widetilde \gamma+(a,b)$. 
Moreover, 
since by assumption $U$ is not fully essential, 
$\widetilde U \bigcap (\widetilde U+ i(-b,a)) = \emptyset$
for any integer $i\neq 0$. Therefore, 
\begin{equation}
\widetilde U \text{ is contained in the strip bounded by } \widetilde \gamma-(-b,a) \text{ and }\widetilde \gamma+(-b,a).
\end{equation}
This is a contradiction with the choice of $z_\ast$
 and the fact that $(\alpha, \beta)$ is totally irrational. 
 So $U$ is fully essential. Moreover, if $\overline{U}$ is not the whole torus, 
then any connected component
of $(\overline{U})^c$ is a periodic open disk (the periodicity follows from the 
non-wandering hypothesis). In particular, by the choice of $\delta>0,$ 
each $f$-invariant maximal
open disk (if any) is a connected component of $(\overline{U})^c$.

By assumption~(\ref{Gr_condition}),
every fixed point has non-zero topological index.
In this case, the fixed point 
is also called a non-degenerate fixed point. 
In general there are two types of such points:
\begin{enumerate}
\item $p\in\text{Fix}(f)$ has topological index $1$.
\item $p\in \text{Fix}(f)$ 
has topological index $-1$. In this case, $p$ 
is a hyperbolic saddle, and 
both eigenvalues of $Df(p)$ are positive real numbers, one larger than $1$ and the other smaller.
\end{enumerate}

Now by Lemma~\ref{primeends}, 
condition (2) of Definition~\ref{Gr_definition}
implies that
each $f$-invariant open disk $D$ 
has prime ends rotation number $\rho_{\text{PE}}(f,D)\notin \Bbb Q$. In particular,
such a prime ends rotation number is not zero, so the sum of the indices of 
fixed points contained in 
the union of all the (finitely many) maximal $f$-invariant open disks is
positive (or zero, in case $\overline{U}$ is the whole torus). 
By the Lefschetz fixed point 
formula (Lemma~\ref{Lefschetz}),  
the sum of the indices at all the  
fixed points is zero.
And as $(0,0)$ is an extremal point of the rotation set, 
$f$ must have fixed points (see ~\cite{Franks_ETDS_8}).  
So it follows that 
there exists at least one negatively indexed fixed point, denoted $Q_\ast,$ 
contained in the complement 
of the union of these maximal open $f$-invariant disks. 
Thus, $Q_\ast$ is a fixed hyperbolic saddle point, 
which belongs to $\overline{U},$ and we have finished the proof.
\end{proof}

For a fixed hyperbolic saddle point $Q_{*}$ (or a fixed
saddle-like point), let an \textit{unstable branch} (respectively, 
\textit{stable branch}) at $Q_{*}$ be one of the connected components of $%
W^u(Q_{*})\backslash \{Q_{*}\}$ (respectively, one of the connected
components of $W^s(Q_{*})\backslash \{Q_{*}\}$). Choose a lift $\widetilde{Q}%
_{*}$ of the hyperbolic saddle point $Q_{*},$ which is fixed by $\widetilde{f%
}.$ We can then lift the corresponding stable and unstable branches at $%
Q_{*} $ to those branches at $\widetilde{Q}_{*}$.

\begin{prop}\label{stable_unstable} 
It is not possible that some unstable branch $\widetilde{\lambda}_u$
and some stable branch $\widetilde{\lambda}_s$ at the hyperbolic saddle 
$\widetilde {Q}_\ast$ intersect.
\end{prop}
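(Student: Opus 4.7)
The plan is to derive a contradiction by showing that a homoclinic intersection at the same lift forces a homoclinic-like intersection with a nontrivially translated lift, which by Lemma~\ref{create_topological_horseshoe} would produce a periodic orbit with nonzero rational rotation vector, violating the totally irrational structure of $\rho(\widetilde f)$. Assume for contradiction that $\widetilde{p} \in \widetilde{\lambda}_u \cap \widetilde{\lambda}_s$ with $\widetilde p \ne \widetilde Q_\ast$. Since $f \in \mathcal G^r$ admits no saddle connections, $\widetilde \lambda_u$ and $\widetilde \lambda_s$ cannot share a nontrivial subarc (otherwise projection to $\Bbb T^2$ would yield a homoclinic connection at $Q_\ast$). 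Because the two branches are smooth and leave $\widetilde Q_\ast$ along the stable and unstable eigendirections of $Df(Q_\ast)$, the intersection at $\widetilde p$ can be taken to be topologically transverse in the sense of Definition~\ref{topological_transverse}.

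The arc of $\widetilde \lambda_u$ from $\widetilde Q_\ast$ to $\widetilde p$ together with the arc of $\widetilde \lambda_s$ from $\widetilde p$ back to $\widetilde Q_\ast$ forms a simple closed loop $\widetilde \gamma \subset \Bbb R^2$, bounding a topological disk $\widetilde D$. By the $\lambda$-lemma applied at $\widetilde Q_\ast$, forward iterates of small arcs transverse to $\widetilde \lambda_s$ accumulate in $C^1$ on compact pieces of $\widetilde \lambda_u$, producing a Smale horseshoe near $\widetilde \gamma$. By itself this yields only $\widetilde f$-periodic points with rotation vector $(0,0)$, which is not in contradiction with our hypothesis; hence the extra ingredient must come from the rotational information supplied by $z_\ast$.

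Here is where the totally irrational rotation is used. By Lemma~\ref{Saddle_Essential}, $Q_\ast \in \overline U$ and $z_\ast \in U$, and $z_\ast$ has rotation vector $(\alpha,\beta)$ with totally irrational coordinates, hence its lifted orbit visits every direction of $\Bbb R^2$ without bounded deviation from the line $L_0$. Using recurrence of $z_\ast$ together with the fact that $U$ is open and fully essential, one can find a long orbit segment of $\widetilde f$ that starts inside a small neighborhood of $\widetilde Q_\ast$, escapes across the $\widetilde{\lambda}_u / \widetilde{\lambda}_s$ configuration following the rotation direction, and returns near a translate $\widetilde Q_\ast + (a,b)$ for some $(a,b) \in \Bbb Z^2\setminus\{(0,0)\}$. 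Combined with the $\lambda$-lemma around $\widetilde Q_\ast$, such a returning orbit forces $\widetilde\lambda_u$ to meet the translated stable branch $\widetilde\lambda_s + (a,b)$ topologically transversely. Then Lemma~\ref{create_topological_horseshoe} (also Remark~\ref{saddle_like_points_horseshoe}) provides $N \ge 1$ and an $\widetilde f$-periodic point with rotation vector $(a/N, b/N) \ne (0,0)$. Since $\rho(\widetilde f)$ is the segment from $(0,0)$ to the totally irrational vector $(\alpha,\beta)$, its only rational point is $(0,0)$, contradicting $(a,b) \ne (0,0)$.

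The main obstacle is the third step: producing the intersection with a nontrivially translated stable branch. The $\lambda$-lemma alone only generates intersections with $\widetilde\lambda_s$ itself (all in the same lift). One must exploit the very specific interaction between the rotation of $z_\ast$, the full essentiality of $U$, and the local hyperbolic dynamics to guarantee that a recurring orbit piece ``drags'' $\widetilde{\lambda}_u$ across a fundamental domain and forces a crossing with a genuine integer-translate of $\widetilde{\lambda}_s$. Making this precise is the technical core of the proposition.
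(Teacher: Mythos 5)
There is a genuine gap. You correctly reduce to the disk $\widetilde D$ bounded by sub-arcs of $\widetilde\lambda_u$ and $\widetilde\lambda_s$, and you correctly identify that what you need is a topologically transverse intersection between $\widetilde\lambda_u$ and a \emph{nontrivially translated} stable branch $\widetilde\lambda_s+(a,b)$, so that Lemma~\ref{create_topological_horseshoe} produces a non-contractible periodic orbit. But your mechanism for obtaining that intersection --- using recurrence of $z_\ast$ and full essentiality of $U$ to ``drag'' the unstable branch across a fundamental domain --- is a sketch that, as you yourself acknowledge, is not made precise, and it is not the route the paper takes. More importantly, it cannot always succeed: there is no a priori reason that a homoclinic loop at $\widetilde Q_\ast$ forces such a translated crossing.

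The paper closes this gap by a dichotomy on the saturated domain $\widetilde D_{\text{sat}}=\bigcup_{n\in\Bbb Z}\widetilde f^n(\widetilde D)$. If $\widetilde D_{\text{sat}}$ meets some nonzero integer translate of itself, \emph{then} one extracts a topologically transverse intersection of $\widetilde\lambda_u$ with $\widetilde\lambda_s\pm(a,b)$ and concludes via Lemma~\ref{create_topological_horseshoe} exactly as you intend. If instead $\widetilde D_{\text{sat}}$ is disjoint from all its nontrivial integer translates, one fills it to an open disk $\text{Fill}(\widetilde D_{\text{sat}})$ that projects to a bounded $f$-invariant open disk in $\Bbb T^2$ (by Lemma~\ref{Bounded Invariant Disks}). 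Because $f\in\mathcal G^r$ has no saddle connections, Lemma~\ref{primeends} forces the prime ends rotation number of this disk to be irrational, hence its boundary has no periodic points; therefore $\widetilde Q_\ast$ lies in the interior $\text{Fill}(\widetilde D_{\text{sat}})$, which contradicts Lemma~\ref{Saddle_Essential}, which was set up precisely so that $Q_\ast$ does not belong to any maximal invariant open disk. This second branch is essential --- without it the proof does not close --- and your proposal does not contain it. (A secondary issue: at the level of generality used here the paper works with the topological notion of transversality and Lemma~\ref{create_topological_horseshoe}, not the $\lambda$-lemma or a literal Smale horseshoe; the $\lambda$-lemma step you invoke is neither needed nor the mechanism the argument relies on.)
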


\begin{proof} 
Suppose by contradiction that a stable branch $\widetilde{\lambda}_s$
at $\widetilde {Q}_\ast$ intersects 
an unstable branch $\widetilde{\lambda}_u$ at $\widetilde{Q}_\ast$. We 
can then choose an intersection point 
$\widetilde w$, 
such that, 
the arc along $\widetilde{\lambda}_u$ from $\widetilde{Q}_\ast$ to $\widetilde w$
and the arc along $\widetilde{\lambda}_s$ from $\widetilde {Q}_\ast$ to 
$\widetilde w$ are disjoint, except at their endpoints.
It follows that, the union of these two arcs bounds a topological disk $\widetilde D$. 
Now we 
define 
\begin{equation}
\widetilde D_{\text{sat}} =\bigcup_{n\in \Bbb Z} \widetilde f^n(\widetilde D).
\end{equation}
Note that $\widetilde D_{\text{sat}}$ is an open and connected 
$\widetilde f$-invariant  subset
of the plane. 

If there exists some integer vector  
$(a,b)\in \Bbb Z^2\backslash \{(0,0)\}$,
such that 
\begin{equation}
\widetilde D_{\text{sat}} \bigcap \big(\widetilde D_{\text{sat}}+(a,b) \big) \neq \emptyset,
\end{equation}
then either $\widetilde{\lambda}_u$ intersects 
$\widetilde{\lambda}_s+(a,b)$ topologically transversely, 
or $\widetilde{\lambda}_u$ intersects 
$\widetilde{\lambda}_s -(a,b)$ topologically transversely (see definition~\ref{topological_transverse}).
In both cases, it follows from Lemma~\ref{create_topological_horseshoe} that there exists a 
periodic orbit $p_0$ whose rotation vector $\rho(\widetilde f,p_0)$ 
is non-zero and rational, which is a contradiction with 
assumption (\ref{rotation_set_assumption}).

Thus, we are left with the case when 
$\widetilde D_{\text{sat}}$ does not
intersect any of its non-trivial integer translations. 
As before, we consider the filled open set 
$\text{Fill}(\widetilde D_{\text{sat}}),$ which is given by  
the union of $\widetilde D_{\text{sat}}$ and 
all the bounded connected components of its complement.
It is not hard to see that 
$\text{Fill}(\widetilde D_{\text{sat}})$ 
is an open topological disk,
which does not intersect any of its non-zero integer translations. 
Thus, we can consider its projection 
$D_{\text{Fill}}:= \pi (\text{Fill}(\widetilde D_{\text{sat}}))$, which is an 
$f$-invariant open disk. By Lemma~\ref{Bounded Invariant Disks}, 
$\text{Fill}(\widetilde D_{\text{sat}})$ has bounded diameter. 
Since $f\in \mathcal G^r$, in particular there are no saddle connections, 
by Lemma~\ref{primeends}, 
the prime ends rotation number 
$\rho_{\text{PE}}(\widetilde f, \text{Fill}(\widetilde D_{\text{sat}}) )$ 
is irrational, and so
 the boundary 
$\partial \text{Fill}(\widetilde D_{\text{sat}})$ 
does not contain any periodic point. 
This implies that 
\begin{equation}
\widetilde {Q}_\ast \in \text{Fill}(\widetilde D_{\text{sat}}).
\end{equation} 
This is a contradiction  
because  $Q_\ast$ does not to belong to 
a fixed open disk, see Lemma~\ref{Saddle_Essential}.
So $\widetilde \lambda_u$ does not 
intersect any stable branch at $\widetilde {Q}_\ast$.
\end{proof}

\begin{Remark}\label{no_homoclinic_intersection_saddle_like}
  The same conclusion is also true for the unstable and the stable branch
  at an index $0$ 
saddle-like fixed point. The proof follows the 
same lines as above, with a difference that, 
when we apply Lemma~\ref{create_topological_horseshoe} in the arguments, we actually need the 
statement for saddle-like fixed points, as was explained in Remark~\ref{saddle_like_points_horseshoe}.
Note also, the conditions stated in Lemma~\ref{primeends} are
such that in both cases it can be applied. 
\end{Remark}

\begin{lemma}
\label{stable_unbounded} Each stable or unstable branch at $\widetilde{Q}%
_{*} $ is unbounded in $\Bbb R^2$.
\end{lemma}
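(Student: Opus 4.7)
I would argue by contradiction. Suppose some branch of $\widetilde{Q}_\ast$ is bounded in $\mathbb{R}^2$. Since a stable branch of $\widetilde{f}$ is an unstable branch of $\widetilde{f}^{-1}$, and the class $\mathcal{G}^r$ together with the hypothesis on the rotation set is symmetric under $f \mapsto f^{-1}$, I may assume without loss of generality that the bounded branch is an unstable branch $\widetilde{\lambda}_u$. Set $K := \overline{\widetilde{\lambda}_u} \subset \mathbb{R}^2$, a compact connected $\widetilde{f}$-invariant continuum containing $\widetilde{Q}_\ast$.

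The strategy is to apply Lemma~\ref{primeends} to the continuum $K$ and a carefully chosen connected component $O$ of $K^c$ with $\widetilde{Q}_\ast \in \partial O$. Existence of such $O$ follows from the local hyperbolic saddle structure together with Proposition~\ref{stable_unstable}: a local stable branch of $\widetilde{Q}_\ast$ is globally disjoint from $\widetilde{\lambda}_u$, hence disjoint from $K$ on a sufficiently small neighborhood of $\widetilde{Q}_\ast$, and therefore provides an escape route from $\widetilde{Q}_\ast$ into $K^c$ arbitrarily close to $\widetilde{Q}_\ast$. Since there are only finitely many components of $K^c$ adjacent to $\widetilde{Q}_\ast$ and $\widetilde{f}$ permutes them while fixing $\widetilde{Q}_\ast$, some iterate $\widetilde{f}^N$ preserves $O$. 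The hypotheses of Lemma~\ref{primeends} apply to $f \in \mathcal{G}^r$ because condition (1) of Definition~\ref{Gr_definition} ensures no $f$-periodic point has $1$ as an eigenvalue of the derivative, so case (2a) of that lemma holds automatically.

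Next I would split into two cases according to $\rho_{\text{PE}}(\widetilde{f}^N, O)$. If this prime-ends rotation number is rational, Lemma~\ref{primeends}(i) produces saddle-like $\widetilde{f}^N$-periodic points on $\partial O$ joined by invariant-manifold connections; these are $f$-periodic saddle-like points joined by a saddle connection, contradicting condition (2) of Definition~\ref{Gr_definition}. If the rotation number is irrational and $O \neq K^c$, Lemma~\ref{primeends}(ii) asserts that $\partial O$ contains no $\widetilde{f}^N$-periodic point, contradicting $\widetilde{Q}_\ast \in \partial O$. Either outcome contradicts the standing assumption that $\widetilde{\lambda}_u$ is bounded.

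The main obstacle will be justifying the technical hypothesis $O \neq K^c$ needed to invoke Lemma~\ref{primeends}(ii), i.e., that $K$ genuinely separates the plane. Heuristically, the curve $\widetilde{\lambda}_u$ has infinite intrinsic length yet is confined to a bounded region, so it must accumulate on itself and, in typical situations, produce bounded complementary components; but turning this picture into a proof requires a careful topological analysis of $\overline{\widetilde{\lambda}_u}$ near its accumulation set. If $K^c$ happens to be connected (so that $K$ is a dendrite-like continuum), an auxiliary argument will be needed, either by enlarging $K$ with other invariant objects attached to $\widetilde{Q}_\ast$ to force separation, or by applying a Cartwright--Littlewood type result to the tree-like $K$ to directly extract another periodic point on $\partial O$ connected to $\widetilde{Q}_\ast$, again producing a forbidden saddle connection.
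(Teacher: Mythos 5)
Your approach differs genuinely from the paper's, and the gap you flag is real. The paper never applies Lemma~\ref{primeends}(ii) to the continuum $K = \overline{\widetilde{\lambda}_u}$. Instead it proves two intermediate claims: that $\overline{\widetilde{\lambda}_u}$ intersects every other branch of $\widetilde{Q}_\ast$, and then (via an argument adapted from Oliveira) that $\overline{\widetilde{\lambda}_u}$ in fact \emph{contains} every other branch. Thus if one branch is bounded, all four are bounded with a common closure, and the proof is finished by invoking the final step of Oliveira's Theorem~2 (page 582 of~\cite{Oliveira}) to produce an intersection between stable and unstable branches, contradicting Proposition~\ref{stable_unstable}. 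Prime ends are used only in the first intermediate claim, and there the contradiction hypothesis supplies a branch $\widetilde{\lambda}$ genuinely disjoint from $K$, giving an accessible fixed prime end and forcing $\rho_{\text{PE}}=0$; item~(i) of Lemma~\ref{primeends} alone then yields a forbidden saddle connection, so item~(ii) — and hence the delicate hypothesis $O\neq K^c$ — is never invoked.

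Beyond the $O\neq K^c$ gap, there is a specific error in your accessibility step. You infer that a local stable branch is "globally disjoint from $\widetilde{\lambda}_u$, hence disjoint from $K$ on a sufficiently small neighborhood of $\widetilde{Q}_\ast$." Proposition~\ref{stable_unstable} gives only $\widetilde{\lambda}_s \cap \widetilde{\lambda}_u = \emptyset$; it does \emph{not} imply $\widetilde{\lambda}_s$ avoids the closure $\overline{\widetilde{\lambda}_u}$. In fact the paper's second claim establishes exactly the opposite: if $\widetilde{\lambda}_u$ is bounded, then $\overline{\widetilde{\lambda}_u}\supset \widetilde{\lambda}_s$. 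So the escape route you propose along the stable branch into $K^c$ does not exist, and your direct prime-ends strategy is blocked at two independent points. The paper's more circuitous route through the Oliveira accumulation argument seems to be genuinely necessary here.
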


\begin{proof}
For definiteness, 
fix any unstable branch 
$\widetilde{\lambda}_u$ at $\widetilde{Q}_\ast$, 
and assume by contradiction 
that it is bounded. 

The first claim is that, the closure $\text{cl} (\widetilde{\lambda}_u)$ must intersect 
all the other branches at $\widetilde{Q}_\ast$. 
To see the claim, assume by contradiction that
$\text{cl} ( \widetilde{\lambda}_u)$ 
does not intersect
some branch $\widetilde \lambda$. 
Then there exists some 
connected component $\widetilde U$ 
of the 
complement of 
$\text{cl}(\widetilde {\lambda}_u)$,
containing $\widetilde \lambda$. 
Since $\widetilde \lambda$ is $\widetilde f$-invariant,
so is $\widetilde U$.
Note that  $\widetilde{Q}_\ast \in \partial \widetilde U$, and 
it is in fact accessible through the branch
$\widetilde \lambda$, from the interior of $\widetilde U$. 
Thus, the prime ends rotation number 
$\rho_{\text{PE}}(\widetilde f,\widetilde U)$ must be equal to $0$.
And so, Lemma \ref{primeends} implies the existence of saddle-connections, 
something that contradicts item (2) of Definition~\ref{Gr_definition}.

The second claim is that, if $\widetilde \lambda$ 
is any other branch at $\widetilde{Q}_\ast$, 
then $\text{cl}(\widetilde{\lambda}_u)\supset \widetilde \lambda$.
To prove this claim, note first that if $\widetilde \lambda$ is 
another unstable branch, then $\widetilde \lambda$ does not intersect 
$\widetilde{\lambda}_u$. 
And if $\widetilde \lambda$ is a stable 
branch, then by Proposition~\ref{stable_unstable}, 
we again obtain that $\widetilde \lambda$ does not intersect 
$\widetilde{\lambda}_u$. 
The following argument 
is a variation of one due to Fernando Oliveira in the area-preserving case
(see Lemma 2 of  ~\cite{Oliveira}).
We include it here for completeness.

Assume by contradiction that 
\begin{equation}\label{not_subset}
\text{cl}(\widetilde{\lambda}_u) \not\supset \widetilde \lambda.
\end{equation}
Since $\text{cl}(\widetilde{\lambda}_u)$ is a connected $\widetilde f$-invariant 
compact subset, 
there is a compact simple arc $\gamma$ 
contained in $\widetilde \lambda$, such that $\text{cl}(\widetilde {\lambda}_u)\bigcap \gamma$
consists of exactly the two endpoints of $\gamma$. Then
 there are two possibilities: 
 
\begin{enumerate}
\item for all non-zero integer vectors $(m,n)$, 
$\gamma\bigcap  \big (\text{cl}(\widetilde {\lambda}_u)+(m,n) \big)=\emptyset$.
\item for some $(m_0,n_0) \in \Bbb Z^2\backslash \{(0,0)\}$, 
$\gamma\bigcap  \big( \text{cl} (\widetilde {\lambda}_u)+(m_0,n_0) \big) \neq \emptyset$.
\end{enumerate}

In case (1), 
we can find a bounded connected component 
of the complement of $\text{cl}(\widetilde{\lambda}_u)\cup \gamma$, whose 
boundary contains $\gamma$. 
We denote it as $\widetilde O$. Then look at the set $O=\pi(\widetilde O)$. 
It is not hard to see that, $O$ contains 
a wandering domain for $f$, 
which is a contradiction.

In case (2), as $\gamma$ is contained in a branch at $\widetilde{Q}_\ast,$ 
$\text{cl} (\widetilde {\lambda}_u)$ intersects 
$\text{cl} (\widetilde {\lambda}_u)+(m_0,n_0)$. So we define
\begin{equation}
\widetilde L: =
\bigcup_{k\in \Bbb Z} \text{cl}(\widetilde {\lambda}_u ) + k(m_0,n_0).
\end{equation}
Then $\widetilde L$ is closed, connected,  
$\widetilde L + (m_0,n_0) =\widetilde L$ and it is bounded in the direction 
perpendicular to $(m_0,n_0).$
Moreover, $\widetilde f(\widetilde L)=\widetilde L.$
But this shows that $\rho(\widetilde f)$ must be contained in 
a line of rational slope (parallel to the vector $(m_0,n_0)$), which is a contradiction with 
assumption
(\ref{rotation_set_assumption}). This shows our second claim. 

Note that we could have started with any stable branch $\widetilde{\lambda_s}$ as well. So 
these two claims above show that, under the assumptions 
(\ref{Gr_condition}) and (\ref{rotation_set_assumption}),
if one branch at the hyperbolic fixed point 
$\widetilde {Q_\ast}$ is bounded, then all four branches are bounded. 
Moreover, in this case, they all have the same closure. 
Then the final arguments follow exactly 
from Oliveira in \cite{Oliveira}, page 582, namely we get 
an intersection between a stable and an unstable branch.
As this homoclinic intersection contradicts Proposition~\ref{stable_unstable},  
all four branches at $\widetilde{Q}^{*}$ are unbounded.

To conclude, we present a sketch of Oliveira's 
argument used above. Let us denote the four branches at $\widetilde{Q}_{*}$ 
by $\widetilde{\lambda }_{1,u},\widetilde{\lambda }_{2,u},\widetilde{\lambda }_{1,s}$ 
and $\widetilde{\lambda }_{2,s}$. We are assuming that cl$(\widetilde{\lambda }_{i,j})$ is 
bounded and equal to the same continuum for all $i\in \{1,2\}$ 
and $j\in \{u,s\}.$ 
This implies that there exist two branches, one stable and one unstable, and a 
local quadrant $Quad$ at $\widetilde{Q}^{*}$ adjacent to at least one of them, 
such that both branches accumulate on $\widetilde{Q}^{*}$ through $Quad.$ 
It is not hard to show that a picture similar to one of the possibilities in Figure~\ref{oliveira1} 
must happen. More precisely, there always exists a Jordan curve separating 
the local $\widetilde{\lambda }_{1,s}$ from the hatched area of $Quad$.    

\begin{figure}[!ht]
\begin{center}
\includegraphics[width=10cm]{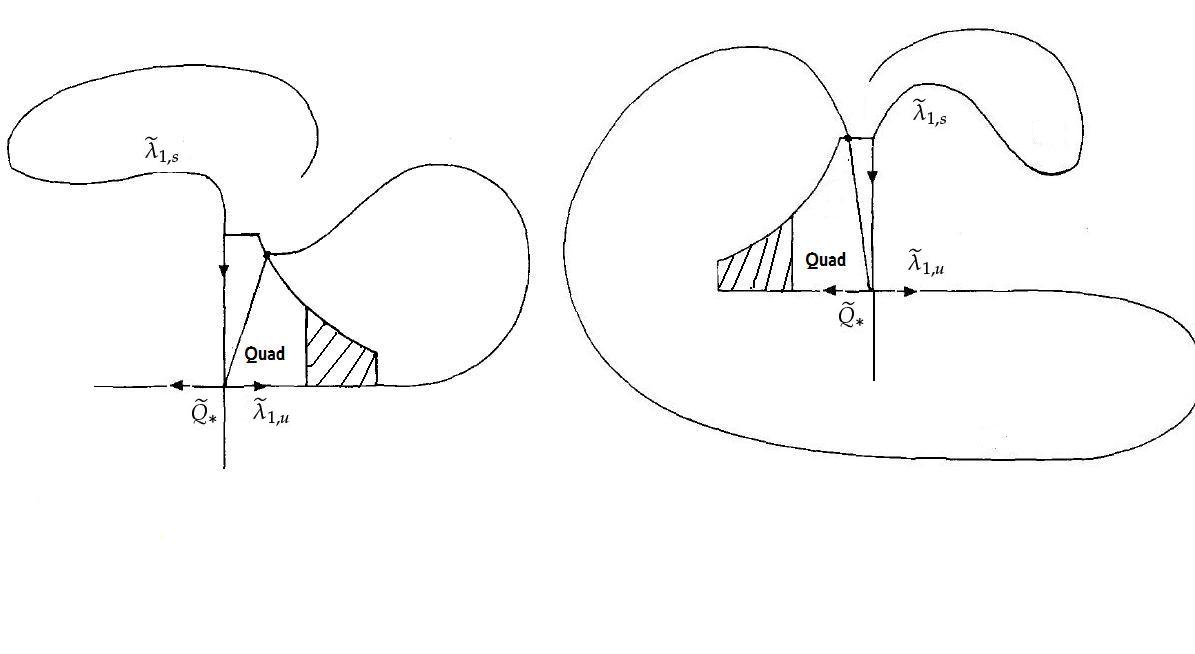}
\caption{The contractible case}
\label{oliveira1}
\end{center}
\end{figure}

And it is easy to see that the stable branch can not accumulate on 
$\widetilde{Q}^{*}$ through $Quad$ without intersecting the unstable branch: 
the only way it can enter $Quad$ is through the hatched area. And it has to intersect 
 the unstable branch in order to reach that area.
\end{proof}

\begin{Remark}\label{unbounded_for_saddle_like} Similar to the previous
  proposition, 
  the same conclusion holds for any index $0$ saddle-like fixed point.
  The proof follows 
the same lines of the above proof. 
\end{Remark}

\begin{prop}\label{same_closure} 
The projections of the four branches are two by two disjoint and they
have the same closure, 
 denoted as follows. 
\begin{equation}
K  =  \text{cl}(\pi(\widetilde \lambda_{1,s})) = \text{cl}(\pi(\widetilde \lambda_{2,s}))
    =  \text{cl}(\pi(\widetilde \lambda_{1,u})) =  \text{cl}(\pi(\widetilde \lambda_{2,u})).
\end{equation}
Moreover, each connected component of the complement of $K$
is a periodic open disk. 
\end{prop}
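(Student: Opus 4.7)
My plan is to show that no connected component $V$ of the complement of $K_1 := \operatorname{cl}(\pi(\widetilde\lambda_{1,u}))$ in $\Bbb T^2$ can have $Q_\ast$ on its boundary; the local four-sector picture at the hyperbolic saddle then forces the initial pieces of the remaining three branches into $K_1$, and $f$-invariance of $K_1$ propagates this to the whole branches. Running the same argument starting with each of the other branches gives equality of the four closures, and the analysis of the complement carried out along the way yields the second assertion.

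First I would record the structural facts about $K_1$. It is closed, connected, and $f$-invariant, and essential in $\Bbb T^2$ because $\widetilde\lambda_{1,u}$ is unbounded in $\Bbb R^2$ by Lemma~\ref{stable_unbounded}. Let $V$ be any connected component of $\Bbb T^2\setminus K_1$. I claim $V$ is a periodic open topological disk. For the disk property I would show that each connected lift of $V$ is bounded in $\Bbb R^2$: an unbounded lift $\widetilde V$ would have to avoid every integer translate of $\operatorname{cl}(\widetilde\lambda_{1,u})$, and combining the total irrationality of $(\alpha,\beta)$ with the asymptotic drift of points on $\widetilde\lambda_{1,u}$ excludes the existence of such an essential component. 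For periodicity, $f$ permutes the components of $\Bbb T^2\setminus K_1$; applying Brouwer's theorem to the prime-ends compactification of any $f^n$-invariant component produces an $f^n$-fixed point in its closure, and the finiteness of $\operatorname{Fix}(f^n)$ (from $f\in\mathcal G^r$), together with the uniform diameter bound of Lemma~\ref{Bounded Invariant Disks}, forces every orbit of components to be finite.

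Next I invoke Lemma~\ref{primeends} applied to the periodic open disk $V$. Because $f\in\mathcal G^r$ has no saddle connections, item (i) of the lemma rules out a rational prime-ends rotation number, so $\rho_{\mathrm{PE}}(\widetilde f^n,V)$ is irrational; item (ii) then implies that $\partial V$ contains no $f$-periodic point. In particular $Q_\ast\notin\partial V$ for every component $V$. Since $Q_\ast\in K_1$, this means $Q_\ast$ is in the interior (relative to $\Bbb T^2$) of $K_1$, so there is an open neighborhood $U_0$ of $Q_\ast$ with $U_0\subset K_1$. The initial segments of $\widetilde\lambda_{2,u}, \widetilde\lambda_{1,s}, \widetilde\lambda_{2,s}$ all project into $U_0\subset K_1$. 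Because both eigenvalues of $Df(Q_\ast)$ are positive, $f$ preserves each individual branch, so iterating forward on unstable branches and backward on stable ones, together with $f$-invariance of $K_1$, extends the local inclusion to all of $W^u(Q_\ast)\cup W^s(Q_\ast)$. Hence all four closures are contained in $K_1$; by symmetry they coincide, which proves the first assertion, and the second assertion is exactly the content of the initial claim applied to the common closure $K$.

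The principal obstacle is the very first step: proving that every connected component of $K_1^c$ is a periodic open disk. Ruling out essential complementary components requires combining the unboundedness of $\widetilde\lambda_{1,u}$ with the total irrationality of $(\alpha,\beta)$, while the periodicity of components requires the delicate finiteness argument on $\operatorname{Fix}(f^n)$ via the $\mathcal G^r$ hypotheses and Lemma~\ref{Bounded Invariant Disks}. Once that step is in place, the prime-ends dichotomy provided by Lemma~\ref{primeends} drives the remainder of the argument essentially automatically.
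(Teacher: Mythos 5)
Your route is genuinely different from the paper's. The paper never invokes the prime-ends dichotomy here: it shows $K_1$ is fully essential with bounded periodic complementary components, notes that any other branch $\widetilde\lambda$ is unbounded (Lemma~\ref{stable_unbounded}) and therefore meets $\pi^{-1}(K_1)$, and then rules out $\pi(\widetilde\lambda)$ also meeting $K_1^c$ by a wandering-domain argument (the ``case (1)'' argument from the proof of Lemma~\ref{stable_unbounded}). Your plan replaces this with: Lemma~\ref{primeends} forces irrational prime-ends rotation numbers and hence no periodic point on the boundary of any complementary component $V$, so $Q_\ast$ sits in the interior of $K_1$, so the local saddle picture together with $f$-invariance of $K_1$ pushes the other three branches into $K_1$. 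That is an attractive and more structured idea, but it has a genuine gap at its central step.

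The problem is the inference from ``$Q_\ast\notin\partial V$ for every complementary component $V$'' to ``$Q_\ast\in\operatorname{Int}(K_1)$.'' Since $Q_\ast\in K_1$, what you need is $Q_\ast\notin\overline{K_1^c}=\overline{\bigcup_V V}$. But $\overline{\bigcup_V V}$ can be strictly larger than $\bigcup_V\overline V$ when there are infinitely many components: a sequence of distinct components $V_i$ with $\operatorname{diam}(V_i)\to 0$ could accumulate on $Q_\ast$ even though $Q_\ast\notin\partial V_i$ for every $i$. Your argument does nothing to exclude this; Lemma~\ref{primeends} only controls the boundary of each individual $V_i$, not the set of accumulation points of the family. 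To close this you would have to show directly that no sequence of distinct complementary disks can accumulate on the hyperbolic saddle $Q_\ast$ (for example by a hyperbolicity/Lyapunov-type estimate near $Q_\ast$, or by a wandering-domain argument, at which point you are essentially back to the paper's proof). A related but smaller issue: your justification of periodicity of the components (Brouwer on the prime-ends disk plus finiteness of $\operatorname{Fix}(f^n)$) is circular as written --- Brouwer only applies once you already have an $f^n$-invariant component; periodicity follows much more simply from non-wandering, since an infinite orbit of a component would produce infinitely many pairwise disjoint iterates of an open set. Everything after the ``$Q_\ast\in\operatorname{Int}(K_1)$'' step (propagation along branches by invariance, symmetry) is fine, so the prime-ends strategy is salvageable, but as it stands the key step is not justified.
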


\begin{proof}
Let us consider one branch, for instance, 
$\widetilde \lambda_{1,u}$, 
which is unbounded in $\Bbb R^2$ by 
Lemma~\ref{stable_unbounded}.
Then, the closure of its projection,
 $\text{cl}(\pi(\widetilde \lambda_{1,u}))$,
is an essential subset of $\Bbb T^2$. 
If  $\text{cl}(\pi(\widetilde \lambda_{1,u}))$ is not fully essential, 
then some connected component $U$ of its
complement is itself essential. 
Since $f$ is non-wandering, it follows that $U$ 
is periodic. But then, the rotation set $\rho(\widetilde f)$
has to be contained in some affine line with rational slope, which is 
a contradiction with assumption~(\ref{rotation_set_assumption}).
So, $\text{cl}(\pi(\widetilde \lambda_{1,u}))$ is fully essential. 
By item (1) of Definition~\ref{Gr_definition} and assumption (\ref{Gr_condition}),
the fixed point set of $f^n$ is finite, for all $n\geq 1$. So,  
Theorem~\ref{Bounded Invariant Disks} implies that 
every connected component of 
the lift of the complement of $\text{cl}(\pi(\widetilde {\lambda}_{1,u}))$
is a bounded $\widetilde{f}$-periodic disk. 
Thus, if $\widetilde \lambda$ is any branch (possibly the same branch),
and since $\widetilde \lambda$ is unbounded,
then $\pi(\widetilde \lambda)$ 
intersects $\text{cl}( \pi (\widetilde \lambda_{1,u}))$.
If $\pi(\widetilde \lambda)$ also intersects $\text{cl}( \pi (\widetilde \lambda_{1,u}))^c$, then 
by
an argument very similar to that in the proof of Lemma~\ref{stable_unbounded} to treat possibility (1), 
we conclude that there exists a wandering domain inside an open periodic disk in the torus, a contradiction. Thus
$\pi(\widetilde \lambda)\subset \text{cl}(\pi (\widetilde \lambda_{1,u}))$. Since we have 
chosen the two branches arbitrarily, the proof is over. 
\end{proof}

By Proposition~\ref{same_closure}, for the fixed saddle point $Q_{*}$ 
in the torus, each of its four branches accumulates on all the other
three branches, as well as on itself. Now we need to recall the final
arguments of the proof of Theorem 2 in ~\cite{Oliveira}. More
precisely, from page 591 to page 594 of~\cite{Oliveira}, the starting
conditions are that all branches are unbounded in $\Bbb R^2$, and the
closure of every branch in $\Bbb T^2$ accumulates on all the four branches. Under
these conditions, following exactly the arguments in that paper, we get that 
\begin{equation}
\pi (\widetilde{\lambda }_{1,u}\bigcup \widetilde{\lambda }_{2,u})\bigcap
\pi (\widetilde{\lambda }_{1,s}\bigcup \widetilde{\lambda }_{2,s})\neq
\emptyset 
\end{equation}
with a topologically transverse intersection.

So, either 
\begin{equation}
(\widetilde{\lambda }_{1,u}\bigcup \widetilde{\lambda }_{2,u})\bigcap ( 
\widetilde{\lambda }_{1,s}\bigcup \widetilde{\lambda }_{2,s})\neq \emptyset
, 
\end{equation}
or 
\begin{equation}
(\widetilde{\lambda }_{1,u}\bigcup \widetilde{\lambda }_{2,u})\bigcap ( 
\widetilde{\lambda }_{1,s}\bigcup \widetilde{\lambda }_{2,s}+(m,n)),\text{ for
some }(m,n)\in \Bbb Z^2\backslash (0,0), 
\end{equation}
in both cases, with topologically transverse intersections.

The first is a contradiction with Proposition~\ref{stable_unstable},
and for the second case, we can use a similar argument as in the proof of
Proposition~\ref{stable_unstable}, to create a non-contractible periodic
orbit. This is a contradiction with the assumption on the shape of $\rho ( 
\widetilde{f})$, i.e., (\ref{rotation_set_assumption}). 

To conclude, as we did in the proof of Lemma \ref{stable_unbounded}, we
present a sketch of the argument contained in the aforementioned pages of 
\cite{Oliveira}.

First, note that for any local quadrant, $Quad$ at $Q_{*},$ both adjacent
branches accumulate on $Q_{*}$ through it. Let $Quad$ be contained in 
the first quadrant and $\lambda _{1,u}$ and $\lambda _{1,s}$ be the branches 
adjacent to it.
Follow $\lambda _{1,u}$ starting at $Q_{*}$ until the first time it reaches $%
Quad,$ at a point $z\in \partial Quad.$ This could happen so that this sub
arc from $\lambda _{1,u}$ whose endpoints are $Q_{*}$ and $z,$ united with a
segment from $z$ to $Q_{*},$ is either a contractible loop, or not. If the
loop is contractible, then it bounds a disk $D$ that separates the hatched
area in the left case of Figure \ref{oliveira1} from a local part of $\lambda _{1,s}.$ As the
only way for $\lambda _{1,s}$ to enter $Quad$ is through the hatched area,
there must be an intersection between $\lambda _{1,s}$ and $\lambda _{1,u}.$

So we are left to consider the case when, for any of the four branches,
whenever it returns to some adjacent local quadrant $Quad_i$ $(i=1,2,3,4),$ it forms
a non-contractible loop ($Quad_1$ is in the first quadrant and so on, rotating 
counterclockwise). In this case, the situation in the universal cover is 
as in Figure \ref{oliveira2}.

\begin{figure}[!ht]
\begin{center}
\includegraphics[width=7cm]{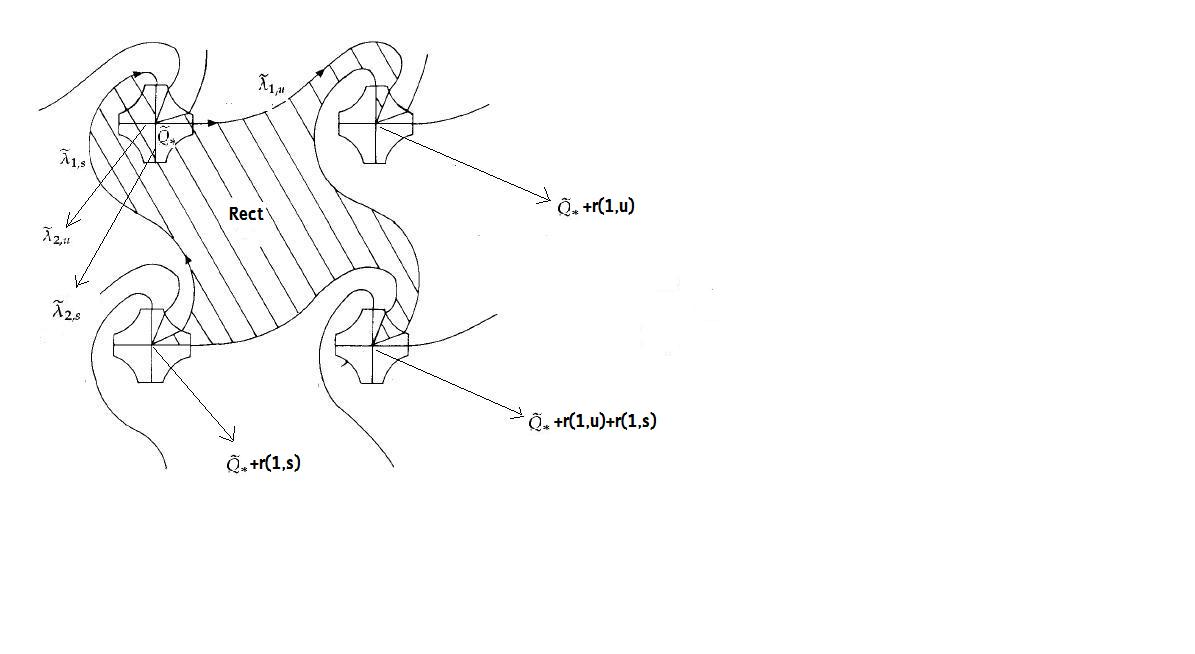}
\caption{The curvilinear rectangle Rect}
\label{oliveira2}
\end{center}
\end{figure}

There, we consider $\widetilde{\lambda }_{1,s}$ and $\widetilde{\lambda }%
_{1,u}$ starting at $\widetilde{Q}_{*}$ until the first point each of them
has in some connected component of $\pi ^{-1}($$\partial Quad_1)$ and then
back to some translate of $\widetilde{Q}_{*}$ through a segment: we get a
''web'' on the plane, whose building blocks are all the integer translates
of the curvilinear rectangle, denoted $Rect$ in Figure \ref{oliveira2}.

We know that $\widetilde{\lambda }_{2,s}$ and $\widetilde{\lambda }_{2,u}$
are both unbounded, so they have to leave $Rect.$ If they do not intersect
$\widetilde{\lambda }_{1,s}$ and $\widetilde{\lambda }_{1,u},$
the only possibilities
are, for $\widetilde{\lambda }_{2,u}$ it leaves $Rect$ through the connected
components of $\pi ^{-1}($$\partial Quad_1)$ that contain $\widetilde{Q}%
_{*}+r(1,u)$ or $\widetilde{Q}_{*}+r(1,u)+r(1,s).$ And $\widetilde{\lambda }%
_{2,s}$ leaves $Rect$ through the connected components of $\pi ^{-1}($$%
\partial Quad_1)$ that contain $\widetilde{Q}_{*}+r(1,s)$ or $\widetilde{Q}%
_{*}+r(1,u)+r(1,s).$ It is easy to see that, unless both $\widetilde{\lambda 
}_{2,s}$ and $\widetilde{\lambda }_{2,u}$ leave $Rect$ through the connected
component of $\pi ^{-1}($$\partial Quad_1)$ that contains $\widetilde{Q}%
_{*}+r(1,u)+r(1,s),$ the diagram in Figure \ref{oliveira2} implies that
there must be an intersection between $\widetilde{\lambda }_{2,s}$ and $
\widetilde{\lambda }_{2,u}$ and we are done.

So, assume this is the case. Now we fall in the situation described in
Figure \ref{oliveira3}.

\begin{figure}[!ht]
\begin{center}
\includegraphics[width=10cm]{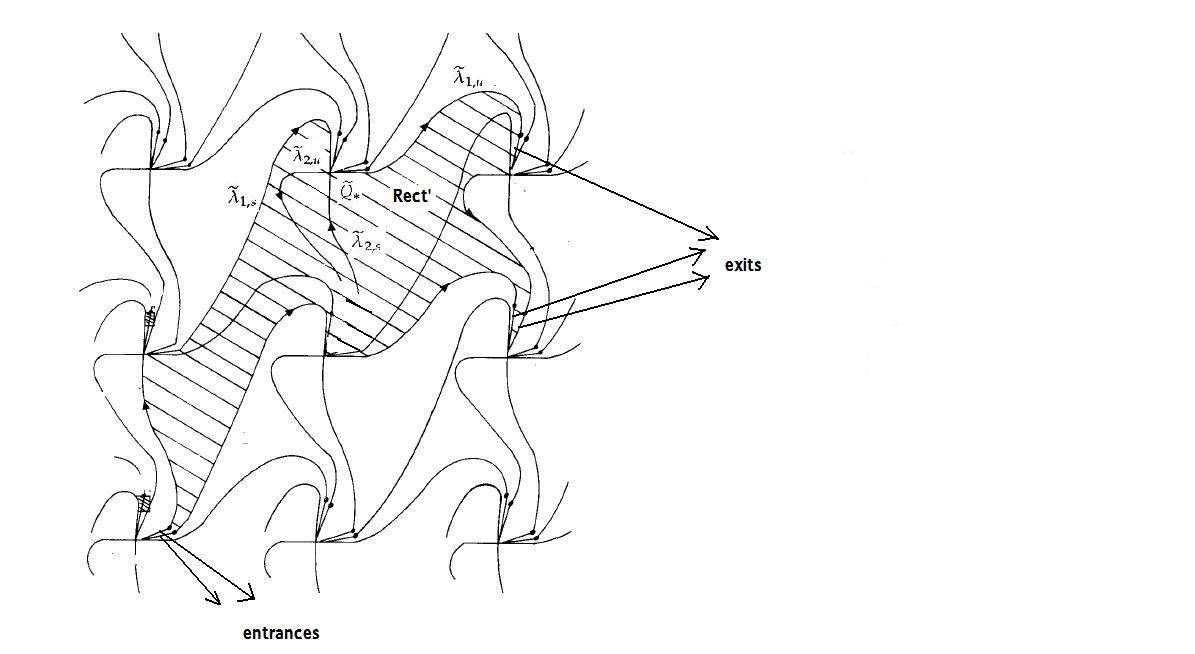}
\caption{The curvilinear rectangle Rect'}
\label{oliveira3}
\end{center}
\end{figure}

Again, as $\widetilde{\lambda }_{2,s}$ and $\widetilde{\lambda }_{2,u}$ are
both unbounded, they have to leave $Rect^{\prime }.$ If they do not intersect
$\widetilde{\lambda }_{1,s}$ and $\widetilde{\lambda }_{1,u},$
from the position
of the exits and entrances in $Rect^{\prime },$ there must be an
intersection between $\widetilde{\lambda }_{2,s}$ and $\widetilde{\lambda }%
_{2,u}$ (we are using the fact that unstable branches leave $Rect^{\prime }$
through one of the exits and stable branches leave $Rect^{\prime }$ through
one of the entrances). 

A final remark is that Figures 3,4 and 5 were taken from \cite{Oliveira}: we just
adapted them to our notation. \hfill $\Box $

\section{A Broader 
Class of  (non generic) Diffeomorphisms}
In the proof of Theorem~\ref{instability}, 
we keep the fixed points away from the support of 
the perturbation. 
Thus, the rotation set after the perturbation still contains the point 
$(0,0)$. 
On the other hand, it seems possible
that $(0,0)$ will be "mode locked" in the following sense. 
Possibly, for all sufficiently small perturbations, $(0,0)$
is not contained in the interior 
of the perturbed rotation set. 
This intuition comes from the phenomenon called 
rational mode locking. 
The case when the rotation set has non-empty interior
was treated in ~\cite{Addas_Calvez}.
One of the theorems proved there
states that rational mode locking happens 
under some conditions that are 
satisfied for generic one-parameter families.

This section has two objectives. 
First, we prove several results describing the dynamics of diffeomorphisms 
$f\in \mathcal K^r$, which together, imply Theorem \ref{carac_map}.
And then, using the previous results and some delicate topological arguments, we show the existence of
lots of Brouwer lines in the universal cover, which are lifts of essential loops
in the torus for all possible homotopy classes. 
In the end of the section we explain how Theorem~\ref{generic_family_not_perturbable} in the introduction 
can be deduced from the existence of Brouwer lines.


We start by describing the dynamics in $\mathcal K^r.$ 
In this whole section, $f \in \mathcal K^r$ and $\widetilde{f}$ is a lift 
of $f$ whose rotation set is the segment from $(0,0)$ to a totally irrational point
$(\alpha,\beta)$.  

\begin{prop}\label{corollary_about_periodic_non_fixed_point}
Every periodic point $p$ is indeed a fixed point,
with topological index $0$, and the local dynamics around $p$ 
can be described explicitly as in Lemma~\ref{section_two_lemma}. 
In particular, $p$ admits exactly one stable and one unstable branch. 
\end{prop}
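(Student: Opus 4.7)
The plan is to proceed in three stages. First, I would show that every $f$-periodic point has vanishing rotation vector. If $p$ has minimal $f$-period $k$ and $\widetilde p$ is any lift, then $\widetilde f^{k}(\widetilde p)=\widetilde p+v$ for some $v\in\mathbb Z^{2}$, so $\rho(\widetilde f,p)=v/k$ is a rational point of $\rho(\widetilde f)=[(0,0),(\alpha,\beta)]$. Total irrationality of $(\alpha,\beta)$ forces $v=(0,0)$, i.e. $\widetilde f^{k}(\widetilde p)=\widetilde p$.

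Second, I would upgrade this to $k=1$. The key structural observation is that $f^{k}$ still belongs to $\mathcal K^{r}$ (the three defining conditions are preserved under iteration) and that $\rho(\widetilde f^{k})=k\,\rho(\widetilde f)$ is again a segment from the origin to a totally irrational vector. Supposing $k\geq 2$ for a contradiction, the $f$-orbit of $p$ provides $k$ distinct $f^{k}$-fixed points with conjugate local dynamics, each with a $\widetilde f^{k}$-fixed lift. I would then classify $Df^{k}(p)$: non-wanderingness excludes sinks and sources; in the hyperbolic-saddle case, Proposition~\ref{stable_unstable}, Lemma~\ref{stable_unbounded} and Proposition~\ref{same_closure} apply almost verbatim to $f^{k}$ (they only use the shape of the rotation set and the no-saddle-connection hypothesis), producing via Lemma~\ref{create_topological_horseshoe} a non-contractible periodic orbit inconsistent with $\rho(\widetilde f^{k})$; in the elliptic case, the resulting $f^{k}$-invariant topological disks must, via Lemma~\ref{primeends}, either create a saddle connection (forbidden by condition~(3) of $\mathcal K^{r}$) or force a rational contribution to $\rho(\widetilde f^{k})$ incompatible with its shape; in the degenerate eigenvalue-$1$ case, condition~(2) together with Lemma~\ref{section_two_lemma} gives a saddle-like picture at each orbit point, and Remark~\ref{unbounded_for_saddle_like} combined with condition~(3) rules out $k\geq 2$ cyclically identified such points.

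Third, once $p$ is known to be $f$-fixed, the same case-by-case analysis applied directly to $Df(p)$ rules out the possibility that $1$ is not an eigenvalue: a hyperbolic saddle at $p$ leads, through the Section~5 machinery (Propositions~\ref{stable_unstable}--\ref{same_closure}), to a contradiction with the shape of $\rho(\widetilde f)$; an elliptic point contradicts Lemma~\ref{primeends}; sinks and sources contradict non-wanderingness. Consequently $Df(p)$ admits $1$ as an eigenvalue, whereupon condition~(2) of $\mathcal K^{r}$ yields $\mathrm{Index}_{f}(p)=0$ and Lemma~\ref{section_two_lemma} provides the local picture of Figure~\ref{the_local_situation}, in particular exactly one stable and one unstable branch.

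The main obstacle will be the second stage, and especially its degenerate eigenvalue-$1$ subcase: condition~(3) of $\mathcal K^{r}$ forbids saddle connections but not every kind of recurrence of branches, so I expect to need a careful re-examination of the Oliveira-type topological arguments of Lemma~\ref{stable_unbounded} for Lojasiewicz saddle-like points whose unbounded branches are permuted cyclically by $f$, in order to rule out $k\geq 2$ cleanly.
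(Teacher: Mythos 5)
Your overall plan (vanishing rotation vector, then rule out all periodic behaviour except index-$0$ fixed points) aims at the right conclusion, but it diverges from the paper's proof in a way that leaves real gaps. The paper does \emph{not} go case-by-case on the linearization $Df^{k}(p)$ of a putative period-$k$ non-fixed point. Instead it uses a two-step index argument: if $p$ is $\widetilde f$-periodic but not $\widetilde f$-fixed, Lemma~\ref{Brouwer} (applied in the plane, to a small free disk whose $k$-th iterate returns) produces an $\widetilde f$-fixed point of \emph{positive} index; since Lemma~\ref{section_two_lemma} forces Lojasiewicz-type isolated fixed points to have index in $\{-1,0,1\}$, the Lefschetz formula (Lemma~\ref{Lefschetz}) then forces an $f$-fixed point of index $-1$, which by condition (2) of $\mathcal K^r$ must be a hyperbolic saddle. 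The Section~5 machinery (via Lemma~\ref{Saddle_Essential} and the end of the proof of Theorem~\ref{generic_not_this_set}) then contradicts the shape of $\rho(\widetilde f)$. The same Lefschetz step handles a genuine fixed point of nonzero index.

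Your proposal never invokes Lemma~\ref{Brouwer} or Lemma~\ref{Lefschetz}, and this is exactly where your case analysis fails to close. In your elliptic subcase, an index-$+1$ fixed point of $f^{k}$ does not by itself produce an $f^{k}$-invariant topological disk, nor does Lemma~\ref{primeends} apply to it directly; what you actually need (and are missing) is the Lefschetz step converting ``positive index somewhere'' into ``index $-1$ somewhere'', after which the saddle machinery takes over. In the degenerate eigenvalue-$1$ subcase with $k\geq 2$ you correctly sense trouble, and indeed the ``Oliveira arguments for cyclically permuted branches'' you hope to re-examine are not present in the paper and are not clearly sufficient: the Brouwer fixed-point index argument is what renders this case moot, since the periodic non-fixed orbit itself manufactures a \emph{new} positive-index fixed point of $f$, independent of the local picture at $p$. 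So the essential missing ideas are Lemma~\ref{Brouwer} and Lemma~\ref{Lefschetz}; without them, the second and third stages of your proposal cannot be completed as written.
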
 

\begin{proof}  By the assumption on the shape of the rotation set $\rho(\widetilde f)$, 
all the periodic orbits of $f$
must be contractible, that is, they lift to periodic orbits of $\widetilde{f}$.
Suppose by contradiction that there exists a periodic point 
which is not fixed, or there exists a fixed point
which does not 
have topological index $0$. In the first case, by Lemma~\ref{Brouwer}, 
there exists some fixed point with positive 
topological index. 
On the other hand, it is easy to see from the definition of 
$\mathcal K^r$ that the indices at fixed points can only assume one of the 
following values: $-1,0$ or $1$.
Observing Lemma~\ref{Lefschetz}, as in both possibilities above 
there exists fixed points
with non-zero index, there must always exist a fixed point which has topological 
index $-1.$ And from the definition of $\mathcal K^r$ this point is a hyperbolic 
saddle.
By the same arguments used to prove 
Lemma~\ref{Saddle_Essential},
there exists a fixed hyperbolic saddle contained in $\overline{U},$
where $U$ is defined in expression \ref{defineu}. 
Now the proof goes exactly as in Theorem~\ref{generic_not_this_set},
 and as in that proof, we get a contraction with the shape of the 
rotation set $\rho(\widetilde f)$. 
So every fixed point has topological index $0$ and there
are no other periodic points.
Since $f\in \mathcal K^r$, the local dynamics around a fixed point is given by 
Lemma~\ref{section_two_lemma}.
\end{proof}

The next lemma is Corollary~\ref{coro_bounded}. 
Note that it depends on Theorem~\ref{case_of_0}.
\begin{lemma}[Corollary~\ref{coro_bounded} restated]\label{Generic_Bounded_in_(-alpha,-beta)} 
The lift $\widetilde f$ has bounded deviation along 
the direction $-(\alpha, \beta)$. Equivalently, there exists $M>0$, such that 
for any $\widetilde x$ and $n\geq 1$, 
\begin{equation}\label{Bounded_in_-v}
\text{pr}_{-(\alpha,\beta)} (\widetilde f^n(\widetilde x)-\widetilde x) \leq M.
\end{equation}
\end{lemma}

\begin{proof}
 By Proposition~\ref{AC}, for any $\widetilde g \in \widetilde{\text{Homeo}}_0(\Bbb T^2)$ 
which is a sufficiently small perturbation of $\widetilde f$,
 it is not possible that $\rho(\widetilde g)$ contains 
 $(0,0)$ in its interior.  
 Then  by Theorem~\ref{case_of_0}, $\widetilde f$ must have bounded deviation along $-(\alpha,\beta)$.
 \end{proof}

\begin{lemma}\label{density_unboundedness}
 For any $\widetilde{f}$-fixed point $\widetilde p$, 
 its stable and unstable branches are both unbounded. Their projections to  the torus  do not intersect. Moreover, the projection of each branch  is dense in the torus. 
  \end{lemma}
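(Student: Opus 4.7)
The plan is to handle the three claims — unboundedness, disjointness of projections, and density — in turn, following the blueprint of Lemma~\ref{stable_unbounded}, Proposition~\ref{stable_unstable} and Proposition~\ref{same_closure} and adapting them to the saddle-like local geometry allowed in $\mathcal K^r$. By Proposition~\ref{corollary_about_periodic_non_fixed_point}, every $\widetilde f$-periodic point is in fact $\widetilde f$-fixed, has zero topological index and admits exactly one stable branch $\widetilde\lambda_s$ and one unstable branch $\widetilde\lambda_u$ with local picture as in Figure~\ref{the_local_situation}. Together with bounded deviation along $-(\alpha,\beta)$ (Lemma~\ref{Generic_Bounded_in_(-alpha,-beta)}) and along $\pm(-\beta,\alpha)$ (Theorem~\ref{Fabio_Bounded}), and the prime-ends tool Lemma~\ref{primeends}, these form the main toolkit.

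For unboundedness, I would assume for contradiction that one of the two branches $\widetilde\lambda$ of $\widetilde p$ is bounded. Then $C=\text{cl}(\widetilde\lambda)\cup\{\widetilde p\}$ is a compact $\widetilde f$-invariant continuum, and in the one-point compactification of $\Bbb R^2$ the unbounded complementary component $U$ becomes a topological disk with $\widetilde p\in\partial U$ (accessibly from infinity if the other branch is also bounded, or along the other branch otherwise). Applying Lemma~\ref{primeends} to $U$ yields a contradiction either way: a rational prime-ends rotation number would force saddle connections on $\partial U$, violating condition~(3) of Definition~\ref{generic_in_generic}, whereas an irrational one would preclude any periodic point on $\partial U$, contradicting $\widetilde p\in\partial U$. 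This is the saddle-like analogue of Lemma~\ref{stable_unbounded} already anticipated in Remark~\ref{unbounded_for_saddle_like}.

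For disjointness of projections, it suffices to verify $\widetilde\lambda_u\cap(\widetilde\lambda_s+m)=\emptyset$ for every $m\in\Bbb Z^2$. The case $m=0$ is Proposition~\ref{stable_unstable} combined with Remark~\ref{no_homoclinic_intersection_saddle_like}. For $m\neq 0$, if the intersection is topologically transverse in the sense of Definition~\ref{topological_transverse}, then Lemma~\ref{create_topological_horseshoe} together with Remark~\ref{saddle_like_points_horseshoe} produces a periodic point of rotation vector $m/N\neq(0,0)$ for some $N\geq 1$; since $(\alpha,\beta)$ is totally irrational, this vector cannot lie on the segment from $(0,0)$ to $(\alpha,\beta)$, contradicting the hypothesis on $\rho(\widetilde f)$. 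When the intersection fails to be topologically transverse, either the two branches coincide along a sub-arc — a saddle connection forbidden by condition~(3) — or one repeats the saturation-and-filling argument of Proposition~\ref{stable_unstable}: build a disk $\widetilde D$ bounded by sub-arcs of $\widetilde\lambda_u$ and $\widetilde\lambda_s+m$, saturate it under $\widetilde f$, fill in all bounded complementary components, and apply Lemma~\ref{primeends} to the resulting $\widetilde f$-invariant open disk to derive the same contradiction as in Proposition~\ref{stable_unstable}.

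For density, set $K=\text{cl}(\pi(\widetilde\lambda_u))$. Since $\widetilde\lambda_u$ is unbounded and connected, $K$ is essential; were $K$ not fully essential, a complementary essential $f$-periodic open set would confine $\rho(\widetilde f)$ to a rational line, impossible. Hence every connected component $V$ of $\Bbb T^2\setminus K$ is an inessential $f$-periodic topological disk whose lifts are bounded by Lemma~\ref{Bounded Invariant Disks}. Pick such a lift $\widetilde V$; the rotation-set constraint (the only integer vector on $[(0,0),(\alpha,\beta)]$ is $(0,0)$) forces $\widetilde f^m(\widetilde V)=\widetilde V$, where $m$ is the period of $V$. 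Applying Lemma~\ref{primeends} to $\widetilde f^m$ on $\widetilde V$ rules out a rational prime-ends rotation number by condition~(3) of Definition~\ref{generic_in_generic}, so the rotation number is irrational and $\partial\widetilde V$ contains no periodic point; Cartwright--Littlewood then provides an $\widetilde f^m$-fixed point in the interior of $\widetilde V$, which by Proposition~\ref{corollary_about_periodic_non_fixed_point} is $\widetilde f$-fixed, and this forces $m=1$. Its unstable branch emanates inside the $\widetilde f$-invariant bounded set $\widetilde V$ and, by invariance, remains entirely inside it, contradicting the unboundedness just proved. Hence $K=\Bbb T^2$, and the identical argument applies to $\widetilde\lambda_s$. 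The main difficulty is the non-topologically-transverse sub-case of disjointness, because in $\mathcal K^r$ the branches can emanate from fixed points with $Df=\text{Id}$ and are not smoothly tangent, so one cannot appeal to a $\lambda$-lemma and must push through the topological saturation-and-prime-ends argument using condition~(3) of Definition~\ref{generic_in_generic}.
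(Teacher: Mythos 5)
Your high-level plan (unboundedness, then disjointness, then density) matches the paper, but your unboundedness argument has a genuine gap that the paper's own route via Lemma~\ref{stable_unbounded} and Remark~\ref{unbounded_for_saddle_like} is specifically designed to avoid. You claim that if a branch $\widetilde\lambda$ is bounded, then $\widetilde p$ lies on the boundary of the unbounded component $U$ of $\mathrm{cl}(\widetilde\lambda)^c$, and you invoke item~(ii) of Lemma~\ref{primeends} to rule out periodic points on $\partial U$. This requires both $\widetilde p\in\partial U$ and $U\neq\mathrm{cl}(\widetilde\lambda)^c$, and neither is automatic: if the other branch $\widetilde\lambda'$ also fails to reach the unbounded component (for instance if $\widetilde\lambda'\subset\mathrm{cl}(\widetilde\lambda)$, which at this stage has not been excluded), there is no reason for $\widetilde p$ to be accessible from $U$ at all, and the parenthetical ``accessibly from infinity if the other branch is also bounded'' does not supply an argument. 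The paper's proof of Lemma~\ref{stable_unbounded} deals with exactly this by splitting into two claims — first, that $\mathrm{cl}(\widetilde\lambda_u)$ intersects every other branch (where the prime-ends argument is applied to the component $\widetilde U$ containing the other branch, not to the unbounded component), and second, that $\mathrm{cl}(\widetilde\lambda_u)$ actually contains every other branch (Oliveira's topological argument) — and then derives a contradiction from Oliveira's final intersection lemma. That two-claim structure is precisely what Remark~\ref{unbounded_for_saddle_like} is telling you to transplant, and your one-step shortcut loses it.

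A second, softer difference: the paper opens its proof of Lemma~\ref{density_unboundedness} with the observation that there are no $f$-periodic open disks at all (because an invariant disk would have irrational prime-ends rotation number by condition~(3), forcing a positive-index interior fixed point, impossible by Proposition~\ref{corollary_about_periodic_non_fixed_point}). That single observation powers all three conclusions, and in particular makes the density step a one-liner (inessential complementary components would be periodic disks, which don't exist). You do not make this observation upfront and instead re-derive a version of it inside the density step via Cartwright--Littlewood and a branch-escape argument — that version is correct, just longer than needed, and you could have used the resulting positive-index fixed point directly against Proposition~\ref{corollary_about_periodic_non_fixed_point}. The disjointness step is fine modulo the usual remark that a non-transverse intersection need not be an arc of coincidence, so you must fall back on the saturation-and-filling argument rather than appeal to ``saddle connection'' directly; you already note this.
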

 
  \begin{proof}
    A first observation is that there is no periodic open disk. If such a disk existed, then from our hypotheses, its prime ends
    rotation number would be irrational. So $f$ would have periodic points with
    positive index (see the end of the proof of Lemma 5.2), something that is
    not allowed by Proposition 6.2.  

    Fix some fixed point $p$ in the torus. Consider any $\widetilde p$ in the plane that
    lifts $p$. 
    The proofs of Proposition 5.3 and Lemma 5.4 imply that both
    the stable and the unstable branches at $\widetilde p$ are unbounded (see
    Remarks 5.1 and 5.2).
 
   Now we show that $W^s(p)$ is dense in $\Bbb T^2$
  (a similar argument works for $W^u(p)$).
  Since the lift $W^s(\widetilde p)$ is unbounded, the closure
  $\overline{W^s(p)}$
  must be an essential subset of $\Bbb T^2$.
  Assume its complement 
  $\Bbb T^2\backslash \overline{W^s(p)}$ 
  is non-empty. 
  If it contains an essential component, 
  then as we have already done in many previous arguments, 
  this implies that the rotation set is
  contained in a straight line with rational slope, a contradiction.
  Thus, $\Bbb T^2\backslash \overline{W^s(p)}$ is inessential. 
  So each connected component is a periodic open disk. As there are none, 
  $W^s(p)$ is dense in $\Bbb T^2.$

    Now, if the stable and unstable branches at $p$ intersect, then
    for some $\widetilde p$ lift of $p$, either its stable and unstable
    branches intersect,
    or the unstable branch at $\widetilde p$ intersects (maybe in a tangency) 
    the stable branch at
    $\widetilde p+(m,l)$ for some non-zero integer vector $(m,l)$.
    In this second case, there exists a Jordan curve in the plane, which is 
 given by the union of two arcs: one contained in the unstable branch at $\widetilde p$
and the other contained in the stable branch at $\widetilde p+(m,l)$.
As this Jordan curve bounds a disk and $W^s(p)$ is dense in $\Bbb T^2$, we get that
the stable branch at some integer translate $\widetilde p+(m',l')$ has a topologically transverse
intersection with the unstable branch at $\widetilde p$.
If $(m',l')$ is non-zero, Lemma 2.10 and Remark 2.5 give a contradiction.

So we are left to consider the case when the stable and unstable branches 
at $\widetilde p$ intersect. As we did before, there exists an open disk
 $\widetilde D$ in the plane
 whose boundary is a Jordan curve containing $\widetilde p$, 
  consisting of two compact arcs. 
 One of the arcs is contained in the unstable branch at 
 $\widetilde p$  and the other is contained in the stable. Now, either
 for some integer $n>0,$ $\widetilde{f}^n(\widetilde D)$
  intersects some non-zero integer translate of $\widetilde D,$ something that
  is not allowed, again by Lemma 2.10 and Remark 2.5, or not. In this second
  possibility,  if we consider
  $\widetilde D_{\text{sat}} =\bigcup_{n\in \Bbb Z}
  \widetilde f^n(\widetilde D),$ then
 $Fill(\widetilde D_{\text{sat}})$ is open and disjoint from all its non-zero
  translates. Therefore, when projected to the torus, it is an $f$-invariant
  bounded open disk (see Lemma 2.3). As such disks do not exist, there are
  no homoclinic intersections in the torus.
  \end{proof}

 The next lemma shows that $f$ also does not admit heteroclinic intersections. 

 \begin{lemma}\label{no_heteroclinic_points}  
 For any two 
fixed points 
 $p_1$ and $p_2$, 
 we have
 \begin{equation}
 W^u(p_1)\bigcap W^s(p_2)=\emptyset.
 \end{equation}
 \end{lemma}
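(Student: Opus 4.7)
Suppose for contradiction that there exist distinct $f$-fixed points $p_1\neq p_2$ in $\Bbb T^2$ with $q\in W^u(p_1)\cap W^s(p_2)$ (the case $p_1=p_2$ was already handled in Lemma~\ref{density_unboundedness}). Condition (3) in Definition~\ref{generic_in_generic} forbids saddle connections, and Proposition~\ref{corollary_about_periodic_non_fixed_point} ensures each $p_i$ has exactly one stable and one unstable branch with the local dynamics of Figure~\ref{the_local_situation}. Hence the crossing at $q$ must be topologically transverse in the sense of Definition~\ref{topological_transverse}, and by invariance every iterate $f^n(q)$ is again a transverse heteroclinic intersection, accumulating on $p_1$ as $n\to-\infty$ and on $p_2$ as $n\to+\infty$.

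The plan is to mimic the strategy used in Proposition~\ref{stable_unstable} and Lemma~\ref{density_unboundedness} for the homoclinic case. I would lift to $\widetilde q\in W^u(\widetilde p_1)\cap W^s(\widetilde p_2)$, pick an innermost pair of consecutive transverse crossings $\widetilde q$ and $\widetilde{q'}$ along these two branches, and let $\widetilde D\subset\Bbb R^2$ be the topological open disk bounded by the Jordan curve formed by the arc of $W^u(\widetilde p_1)$ from $\widetilde q$ to $\widetilde{q'}$ and the arc of $W^s(\widetilde p_2)$ from $\widetilde{q'}$ back to $\widetilde q$. Setting $\widetilde D_{\mathrm{sat}}=\bigcup_{n\in\Bbb Z}\widetilde f^n(\widetilde D)$, I split into the two standard cases. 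Either $\widetilde D_{\mathrm{sat}}$ is disjoint from all its non-trivial integer translates, in which case $\pi(\mathrm{Fill}(\widetilde D_{\mathrm{sat}}))$ is an $f$-invariant open disk in $\Bbb T^2$, contradicting the first observation made in the proof of Lemma~\ref{density_unboundedness}; or $\widetilde D_{\mathrm{sat}}$ meets some non-trivial integer translate $\widetilde D_{\mathrm{sat}}+(a,b)$, forcing a topologically transverse intersection of $W^u(\widetilde p_1)$ with $W^s(\widetilde p_2)+(a,b)$.

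In this second case, one then concatenates the original heteroclinic arc $\widetilde p_1\to\widetilde q\to\widetilde p_2$ with the translated arc $\widetilde p_2+(a,b)\to\widetilde{q''}\to\widetilde p_1+(a,b)$ to produce a pseudo-loop in the plane of net displacement $(a,b)\neq(0,0)$; a Brouwer/horseshoe argument along the same lines as Lemma~\ref{create_topological_horseshoe} and Remark~\ref{saddle_like_points_horseshoe} (adapted from \cite{Salvador_Growth}) then manufactures a periodic orbit of $\widetilde f$ whose rotation vector is a non-zero rational multiple of $(a,b)$, contradicting the hypothesis that $\rho(\widetilde f)$ is the segment from $(0,0)$ to the totally irrational $(\alpha,\beta)$. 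The main obstacle is precisely this last step: Lemma~\ref{create_topological_horseshoe} is stated for a single saddle-like point with a transverse intersection between $W^u$ and a translate of $W^s$, so one must carefully check that the concatenation of heteroclinic pieces between two genuinely different fixed points $p_1,p_2$ can be reduced to a homoclinic-type pattern for a single lifted point modulo $(a,b)$, at which stage the Brouwer-type horseshoe construction from \cite{Salvador_Growth} can be invoked to produce the forbidden non-contractible periodic orbit.
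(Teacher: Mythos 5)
Your proposal takes a genuinely different route from the paper's, and the second of your two cases has a gap that you correctly flag but do not close.

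The paper's proof is much shorter and relies entirely on Lemma~\ref{density_unboundedness}: from a heteroclinic intersection one forms, in the plane, a Jordan curve out of a sub-arc of $W^u(\widetilde p_1)$ and a sub-arc of $W^s(\widetilde p_2)$, and takes the projection $U$ of the bounded disk it encloses, a proper open subset of $\Bbb T^2$. Since (by Lemma~\ref{density_unboundedness}) the invariant branches are dense in $\Bbb T^2$, a branch must enter $U$, and the only way to cross $\partial U$ (after ruling out self-intersections and stable-stable / unstable-unstable crossings) is to produce a homoclinic intersection for $p_1$ or $p_2$ — contradicting the very same Lemma~\ref{density_unboundedness}. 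No horseshoe construction and no case split on integer translates are needed.

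Your approach instead saturates the enclosed disk and splits on whether $\widetilde D_{\mathrm{sat}}$ meets a non-trivial integer translate. Your first case is fine: $\pi(\mathrm{Fill}(\widetilde D_{\mathrm{sat}}))$ is an $f$-invariant open disk, contradicting the first observation in the proof of Lemma~\ref{density_unboundedness}. The second case, however, does not close. From $\widetilde D_{\mathrm{sat}}\cap(\widetilde D_{\mathrm{sat}}+(a,b))\neq\emptyset$ you obtain a topologically transverse intersection $W^u(\widetilde p_1)\cap(W^s(\widetilde p_2)+(a,b))\neq\emptyset$, but this is a single heteroclinic crossing, not a heteroclinic cycle. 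Lemma~\ref{create_topological_horseshoe} (and Remark~\ref{saddle_like_points_horseshoe}) applies only to a point whose own unstable branch meets a translate of its own stable branch. To "concatenate heteroclinic pieces" into such a homoclinic-type pattern one would typically invoke an inclination-lemma argument to transfer the intersection $W^u(p_1)\pitchfork(W^s(p_2)+(a,b))$ to $W^u(p_2)\pitchfork(W^s(p_2)+(a,b))$, but the $\lambda$-lemma for the degenerate, Lojasiewicz-type index-zero saddles under consideration is not established in the paper, and you do not supply a substitute. As written, your final step is an acknowledged hope rather than a proof, whereas the paper's density argument renders the whole horseshoe machinery unnecessary here.
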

 \begin{proof} 
Suppose $f$ admits some fixed points $p_1$ and $p_2$, 
and 
\begin{equation}
W^u(p_1) \bigcap W^s(p_2) \neq \emptyset. 
\end{equation} 
We consider some lifts $W^u(\widetilde{p_1})$ and $W^s(\widetilde{p_2})$
of these branches, 
such that their intersection is non-empty. 
Then, since there are no saddle-like connections, 
we can find some Jordan curve, 
which is the union of one
sub-arc of 
 $W^u(\widetilde{p_1})$ 
 and one sub-arc of $W^s(\widetilde{p_2})$, respectively. 
 This  
 Jordan curve bounds a topological disk, 
 denoted  $\widetilde U$. 
 The projection
 $U=\pi(\widetilde U)$ is a proper 
 open subset of $\Bbb T^2$.

Since both $W^u(p_1)$ and $W^s(p_2)$ are dense in $\Bbb T^2$, 
each of them must
 intersect  $U$. 
 So, there are homoclinic intersections, which do not exist by
  the previous Lemma. This contradiction ends the proof.
  \end{proof}
 
 The goal now is to show that both invariant branches at a fixed point tend to infinity. 
 
 \begin{lemma}\label{unstable_goes_to_infinity} 
 Let $\widetilde p$ be a fixed point (for $\widetilde{f}$). 
 Then both its stable and unstable branches 
 intersect every compact set 
 in a closed subset. 
 More precisely, 
 for the unstable branch $W^u(\widetilde p)\backslash\{\widetilde p\}$
  for example, 
 if $\widetilde \lambda \subset W^u(\widetilde p)\backslash\{\widetilde p\}$ 
denotes the closure of a fundamental domain, 
 then for any compact set $K$, 
 the set $\{n\geq 1 \big | \widetilde f^n(\widetilde \lambda)\bigcap K \neq \emptyset\}$ is finite.  
 \end{lemma}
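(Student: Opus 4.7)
The plan is to argue by contradiction, combining a Brouwer translation argument with an arithmetic obstruction coming from the rotation set being a segment from $(0,0)$ along a totally irrational direction. Suppose there exist a compact $K\subset\mathbb R^2$ and a strictly increasing sequence $n_k\to\infty$ with $\widetilde f^{n_k}(\widetilde\lambda)\cap K\neq\emptyset$; choose $\widetilde x_k\in\widetilde\lambda$ with $\widetilde y_k=\widetilde f^{n_k}(\widetilde x_k)\in K$, and after passing to subsequences assume $\widetilde x_k\to\widetilde x_*\in\widetilde\lambda\subset W^u(\widetilde p)\setminus\{\widetilde p\}$ and $\widetilde y_k\to\widetilde y_*\in K$.

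The first step sets up a Brouwer translation argument. Choose a small open topological disk $V\ni\widetilde x_*$ containing no $\widetilde f$-fixed point with $\widetilde f(V)\cap V=\emptyset$, which is possible because $\widetilde x_*$ is not a fixed point. Lemma~\ref{Brouwer} combined with Proposition~\ref{corollary_about_periodic_non_fixed_point}, which asserts that every $\widetilde f$-fixed point has topological index zero, forces $\widetilde f^n(V)\cap V=\emptyset$ for every $n\geq 1$, so the family $\{\widetilde f^n(V)\}_{n\in\mathbb Z}$ is pairwise disjoint. Passing to a further subsequence, the integer vectors $m_k$ nearest to $\widetilde y_k-\widetilde x_k$ stabilize at some $m_*\in\mathbb Z^2$; since $\widetilde x_k\in V$ and $\widetilde y_k\in V+m_*+o(1)$ for large $k$, the disjointness of the iterates of $V$ rules out $m_*=0$.

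The contradiction is then obtained as follows. The empirical measures $\mu_k=\frac{1}{n_k}\sum_{j=0}^{n_k-1}\delta_{f^j(\pi(\widetilde x_k))}$ on $\mathbb T^2$ have a weak-$*$ accumulation point $\mu$ that is $f$-invariant with $\rho_\mu(\widetilde f)=(0,0)$, and because $\rho(\widetilde f)=[(0,0),(\alpha,\beta)]$ is a ray emanating from the origin, every ergodic component of $\mu$ lies in $\mathcal M_{(0,0)}$, so $\text{supp}(\mu)\subset\mathcal S_{(0,0)}$. Applying Atkinson's lemma (Theorem~\ref{Atkinson}) together with Lemma~\ref{Generic_Bounded_in_(-alpha,-beta)} and Theorem~\ref{Fabio_Bounded}, by the argument of Lemma~\ref{geq_-M} in the reversed direction, every point of $\pi^{-1}(\mathcal S_{(0,0)})$ then has uniformly bounded $\widetilde f$-orbit in $\mathbb R^2$. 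Combining this with the pseudo-orbit closing construction from Section~\ref{proofs_unbounded} (Lemma~\ref{firstclosing}) and a Brouwer-index calculation on a disk around $\widetilde x_*$, one produces a genuine $f$-periodic orbit of rotation vector $m_*/n_k\in\mathbb Q^2\setminus\{0\}$; total irrationality of $(\alpha,\beta)$ implies that $m_*/n_k$ is not proportional to $(\alpha,\beta)$, hence $m_*/n_k\notin\rho(\widetilde f)=[(0,0),(\alpha,\beta)]$, contradicting Theorem~\ref{rotationset}.

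The main obstacle is this last step, namely constructing a true $f$-periodic orbit from the approximate returns: the residual displacement $\widetilde y_*-\widetilde x_*-m_*$ need not vanish in the limit, so Brouwer's translation theorem alone does not produce a fixed point of $\widetilde f^{n_k}-m_*$ near $\widetilde x_*$. I expect circumventing this to require either combining the pseudo-orbit closing with a topological index argument on a large disk around $\widetilde x_*$, or carrying out an $\omega$-limit analysis showing that the orbit on $W^u(\widetilde p)\setminus\{\widetilde p\}$ would otherwise converge to a fixed lift and force a homoclinic or heteroclinic connection forbidden by Lemmas~\ref{density_unboundedness} and~\ref{no_heteroclinic_points}. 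The stable branch statement then follows by applying the entire argument to $\widetilde f^{-1}$, whose rotation set is $[(0,0),-(\alpha,\beta)]$ and which inherits the corresponding bounded deviations from Lemma~\ref{Generic_Bounded_in_(-alpha,-beta)}.
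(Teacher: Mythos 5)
There is a genuine gap in your proposal, which you yourself flag: closing the pseudo-orbit only produces a periodic orbit for some $C^0$-perturbed map $g$, not for $f$ itself, and since the rotation set is only upper semicontinuous, having $m_*/n_k\in\rho(\widetilde g)$ is perfectly compatible with $\rho(\widetilde f)$ being the segment from $(0,0)$ to $(\alpha,\beta)$, so no contradiction ensues. A direct Brouwer-index argument for $\widetilde f^{n_k}-m_*$ also stalls because, as you note, the residual displacement $\widetilde y_*-\widetilde x_*-m_*$ need not vanish. A subsidiary worry: taking $m_*$ to be the nearest integer to $\widetilde y_k-\widetilde x_k$ only controls the residual to within $\frac{\sqrt 2}{2}$, which need not be small enough to fit in the free disk $V$, so the deduction that $m_*\neq 0$ is not yet justified either.

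The paper's argument is both different and tighter, and your second suggested fix (the $\omega$-limit analysis) is the right track, but you sketch only a fragment of it. After extracting the limit $\widetilde q\in\widetilde\lambda$ of the approximate-return points $\widetilde q_i$, the paper examines $\omega(\widetilde q)$ and splits into three cases. If $\omega(\widetilde q)$ has more than one point, it contains a non-fixed point (fixed points are isolated), and a small free disk around it must recur, so Lemma~\ref{Brouwer} yields a positive-index fixed point, contradicting Proposition~\ref{corollary_about_periodic_non_fixed_point}. If $\omega(\widetilde q)$ is a singleton $\{\widetilde r\}$, then $\widetilde q\in W^s(\widetilde r)\cap W^u(\widetilde p)$, a homoclinic or heteroclinic intersection forbidden by Lemmas~\ref{density_unboundedness} and~\ref{no_heteroclinic_points}. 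Hence $\omega(\widetilde q)=\emptyset$: the forward orbit of $\widetilde q$ escapes to infinity, and by Lemma~\ref{Fabio_Bounded} together with Lemma~\ref{Generic_Bounded_in_(-alpha,-beta)} it must escape along $(\alpha,\beta)$. Fix $n_0$ so that $\text{pr}_{(\alpha,\beta)}(\widetilde f^{n_0}(\widetilde q)-\widetilde z)>2M$ for all $\widetilde z\in K$; by continuity this persists on a small disk $B\ni\widetilde q$, and taking $i$ large with $\widetilde q_i\in B$ and $n_i>n_0$, the displacement from $\widetilde f^{n_0}(\widetilde q_i)$ back into $K$ at time $n_i$ has $\text{pr}_{(\alpha,\beta)}$ less than $-2M$, violating Lemma~\ref{Generic_Bounded_in_(-alpha,-beta)}. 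No closing, no empirical measure, and no index computation near $\widetilde x_*$ is needed. Your sketch captures only the singleton case; you would still need to treat the empty case and supply the bounded-deviation endgame to have a complete proof.
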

 
 \begin{proof}
 Consider the unstable branch $W^u(\widetilde p)\backslash\{\widetilde p\}$ 
  and choose a fundamental domain contained in it, 
whose closure we denote by $\widetilde \lambda$. 
  Suppose by contradiction that there exists some compact set 
$K\subset \Bbb R^2$, 
an integer sequence $n_i \to +\infty$, 
and a sequence $\widetilde q_i \in \widetilde \lambda$, 
such that 
\begin{equation}
\widetilde f^{n_i}(\widetilde q_i)\in K.
\end{equation}
 By extracting a subsequence if necessary, 
 we can assume 
 $\widetilde q \in \lambda$ is the limit point of the sequence 
 $\{\widetilde q_i\}_{i\geq 1}$.
 Considering the $\omega$-limit set of $\widetilde q$, denoted
 $\omega(\widetilde q)$, there are three possibilities:

Either $\omega(\widetilde q))$  is  empty, a singleton, or it has more than
 one point. 
 If it contains more than one point, then some point  $\widetilde w$ in it
 is not fixed, because each fixed point is isolated. 
 Then $\widetilde w$ is contained in some disk $U$, 
 such that, 
\begin{align}
   \widetilde f(U)\bigcap U & = \emptyset, \\
\widetilde f^k(U)\bigcap U & \neq \emptyset, \text{ for some } k\geq 2.
\end{align} 
So Lemma~\ref{Brouwer} implies that $\widetilde f$ admits some fixed point
with positive topological index, a 
 contradiction with Proposition~\ref{corollary_about_periodic_non_fixed_point}.

If $\omega (\widetilde q)$ is a singleton, say, 
 equal to $\{\widetilde r\}$, then $\widetilde r$ is necessarily a fixed point. 
 So $\widetilde q$ belongs to the stable branch  
 at the point $\widetilde r$, that is, there is an heteroclinic point,
 a contradiction with Lemma 6.5. 

So, $\omega(\widetilde q)$  is empty.

%
 %
%
%
 In particular, 
 this means that  the sequence $\{\widetilde f^n(\widetilde q)\}_{n\geq 1}$ converges 
 to infinity as 
 $n$ tends to $+\infty$.  
 By Theorem~\ref{Fabio_Bounded} and Lemma~\ref{Generic_Bounded_in_(-alpha,-beta)}, 
 $\widetilde f$ has bounded deviation along the three
 directions $-(\alpha, \beta), (-\beta,\alpha)$ and $(\beta,-\alpha)$. 
So the sequence 
$\widetilde f^n(\widetilde q)$ converges to infinity 
 along the direction $(\alpha,\beta)$.  

In particular, 
for some large $n_0$,
\begin{equation}
\inf_{\widetilde z\in K}  \big( \text{pr}_{(\alpha,\beta)} 
( \widetilde f^{n_0}(\widetilde q)-  \widetilde z) \big)>2M,
\end{equation} 
where $M>0$ comes from estimate (\ref{Bounded_in_-v}).

Then there exists some small disk 
$B$ containing $\widetilde q$, such that, 
for every point $b\in B$, the above estimate also holds true.  
Now, we can choose a sufficiently large $i$, with 
$\widetilde q_i\in B$, and $n_i > n_0$. 
Thus, 
\begin{align}
         & \text{pr}_{(\alpha,\beta)} \big( \widetilde f^{n_0} (\widetilde q_i) - \widetilde f^{n_i}(\widetilde q_i)\big) \\
\geq  & \inf_{\widetilde z\in K}  \big( \text{pr}_{(\alpha,\beta)} 
( \widetilde f^{n_0}(\widetilde q_i)-  \widetilde z) \big) \\
\geq & 2M.
\end{align}
As this is a contradiction with Lemma \ref{Generic_Bounded_in_(-alpha,-beta)}, 
the proof is completed. 
 \end{proof}

\begin{proof}[Proof of Theorem~\ref{carac_map}]\renewcommand{\qedsymbol}{}

The proof now follows easily from 
Proposition \ref{corollary_about_periodic_non_fixed_point}, 
Lemmas \ref{density_unboundedness}, \ref{no_heteroclinic_points}, 
\ref{unstable_goes_to_infinity} and Theorem \ref{Fabio_Bounded}.

\end{proof}

\begin{theorem}\label{Thm_Generic_Family} 
Let $\widetilde f$ denote some lift of some
$f\in \mathcal K^r$, and suppose 
$\rho(\widetilde f)$ is the line segment from 
$(0,0)$ to $(\alpha,\beta)$. 
Then, for any coprime 
integer pair $(a,b)\neq (0,0)$, 
there exists a torus loop $\ell= \ell_{(a,b)}$,
 which can be lifted to 
an $\widetilde f$-Brouwer line $\widetilde \ell$, such that 
$\widetilde \ell+(a,b)=\widetilde \ell$.
\end{theorem}

\begin{proof}

Up to a change of coordinates and/or considering $f^{-1}$ if necessary, we reduce to the 
case when $(a,b)=(0,1)$ and $\alpha >0.$

 \begin{prop}\label{good_curve} 
   There exists an oriented properly
   embedded curve $\widetilde \gamma \subset \Bbb R^2$, 
 with the following properties.
 \begin{enumerate}
 \item  $\widetilde \gamma+(0,1)=\widetilde \gamma$,
   and  $\widetilde \gamma$ is oriented in the direction $(0,1)$.
 \item $\widetilde \gamma$ does not contain any $\widetilde f$-fixed points. 
 \item Let
 $\mathcal R(\widetilde \gamma)$ denote the unbounded complementary domain to the right of 
 $\widetilde \gamma$. For any $\widetilde f$-fixed point 
 contained in $\mathcal R(\widetilde \gamma)$,
 its unstable branch does not intersect $\widetilde \gamma$.
 \item Analogously, let
 $\mathcal L(\widetilde \gamma)$ 
 denote the unbounded complementary domain to the left 
 of 
 $\widetilde \gamma$. 
 For any $\widetilde f$-fixed point 
 contained in $\mathcal L(\widetilde \gamma)$,
 its stable branch does not intersect $\widetilde \gamma$.
 \end{enumerate}
 \end{prop}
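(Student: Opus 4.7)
My plan is to construct $\widetilde{\gamma}$ by exploiting a one-sided half-plane confinement of the invariant branches, coming from bounded deviations, combined with a careful threading through the fixed points. First, I would show that there exists a universal constant $M^\ast > 0$ such that, for every $\widetilde f$-fixed point $\widetilde p$,
\[
W^u(\widetilde p) \subset \{z : \langle z - \widetilde p,(b,-a)\rangle \geq -M^\ast\}, \qquad W^s(\widetilde p) \subset \{z : \langle z - \widetilde p,(b,-a)\rangle \leq M^\ast\}.
\]
The derivation decomposes $(b,-a) = \lambda_1(\alpha,\beta) + \lambda_2(-\beta,\alpha)$ with $\lambda_1 = (b\alpha - a\beta)/(\alpha^2 + \beta^2) > 0$ and $\lambda_2 = -(a\alpha + b\beta)/(\alpha^2 + \beta^2) < 0$ by the sign conditions (\ref{alphabeta_abposition})--(\ref{b-a_assumption}), and combines the lower bound on the $(\alpha,\beta)$-displacement from Lemma~\ref{Generic_Bounded_in_(-alpha,-beta)} with the absolute bound on the $(-\beta,\alpha)$-displacement from Lemma~\ref{Fabio_Bounded}, both applied to $\widetilde f^n(\widetilde z)$ with $\widetilde z$ in a compact fundamental domain of $W^u(\widetilde p)\setminus\{\widetilde p\}$. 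The stable branch bound is obtained by the same reasoning applied to $\widetilde f^{-1}$, which has rotation set the segment from $(0,0)$ to $(-\alpha,-\beta)$ and hence enjoys symmetric bounded-deviation estimates.

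Next I would work in the cylinder $C := \mathbb{R}^2/\mathbb{Z}(a,b)$ with coordinates $(\xi,\eta) = (\langle\cdot,(a,b)\rangle \bmod (a^2+b^2),\ \langle\cdot,(b,-a)\rangle)$, reducing the proposition to the construction of an essential simple closed curve $\overline\gamma \subset C$ avoiding the fixed points and separating them in the $\eta$-direction, with the unstable branches of upper fixed points and the stable branches of lower fixed points each remaining on their own side. In $C$ the fixed points form a discrete set whose $\eta$-coordinates lie in $N$ arithmetic progressions of common difference $1$ (using coprimeness of $(a,b)$); by Lemma~\ref{density_unboundedness} and Lemma~\ref{no_heteroclinic_points} the branches in $C$ are pairwise disjoint proper embedded arcs, each unstable branch terminating at the $+\infty$ end and each stable branch at the $-\infty$ end. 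Writing $\overline\gamma$ as the graph $\eta = h(\xi)$ of a continuous $(a^2+b^2)$-periodic function and using Step~1, a sufficient condition is that for every fixed point $\widetilde q$ placed above (resp.\ below) $\overline\gamma$ one has $h(\xi) < \eta(\widetilde q) - M^\ast$ (resp.\ $h(\xi) > \eta(\widetilde q) + M^\ast$) for all $\xi$; a pointwise weakening using the exact $\xi$-trajectory of each branch may be necessary to make the construction feasible when the $\eta$-values are not well separated. I would build such $h$ by an inductive exhaustion: on each window $|\eta| \leq R$, construct a polygonal $h_R$ compatible with every fixed point in the window, and extract a uniformly convergent subsequence via Arzel\`a--Ascoli.

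The main obstacle is the combinatorial consistency of the graph construction: the above/below assignments at infinitely many fixed points must be simultaneously realizable by a single continuous periodic function $h$. The favorable structural features are that each fixed point imposes only a one-sided $\eta$-constraint of bounded width $M^\ast$, so spatially distant constraints do not interact, and that the branches are pairwise disjoint, preventing pathological nesting. The uniform width $M^\ast$ also provides the equicontinuity needed for the limiting argument. The coprimeness of $(a,b)$ enters through the discreteness of the fixed points in $C$, while the totally irrational direction $(\alpha,\beta)$ enters only through the bounded-deviation lemmas of Step~1.
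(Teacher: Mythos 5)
Your Step 1 --- the uniform half-plane confinement of unstable and stable branches in terms of the $(b,-a)$-projection, obtained by decomposing $(b,-a)$ into a positive multiple of $(\alpha,\beta)$ plus a negative multiple of $(-\beta,\alpha)$ and invoking Lemmas~\ref{Generic_Bounded_in_(-alpha,-beta)} and~\ref{Fabio_Bounded} --- is correct and is essentially the estimate the paper uses implicitly (compare Lemma~\ref{good_neighbourhood} and Lemma~\ref{N_times_becomes_free}). The difficulty is that everything after Step 1 is a sketch of a sketch: you want $\widetilde\gamma$ to be the graph $\eta = h(\xi)$ of a continuous periodic function, and you correctly identify (``the main obstacle'') that the above/below assignment of infinitely many fixed points in the cylinder must be simultaneously realizable by one such graph, but you do not resolve this. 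The half-plane bound only gives constraints of width $M^\ast$, and fixed points in the cylinder have $\eta$-coordinates lying in arithmetic progressions of common difference $1$, so whenever $M^\ast\geq 1/2$ the naive constraints are already inconsistent. You acknowledge that a ``pointwise weakening'' using the actual $\xi$-trajectories of the branches ``may be necessary,'' but you do not show that pairwise-disjoint branches $W^u(\widetilde q)$ and $W^s(\widetilde q')$ can in fact always be separated by a graph; two disjoint proper arcs, one leaving toward the $+\infty$ end and one toward $-\infty$, can still overlap in their $\eta$-ranges over the same $\xi$-interval, so disjointness alone does not preclude an interlocking that no graph can thread. The Arzel\`a--Ascoli exhaustion is asserted without establishing uniform boundedness or equicontinuity of the approximating $h_R$, and the compatibility of the polygonal pieces across windows is exactly the consistency question you leave open.

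The paper sidesteps the graph requirement entirely. Fixing a straight line $\ell$ in the direction $(a,b)$ (after reducing to $(a,b)=(0,1)$), it forms $O_-$ (the union of stable branches of fixed points to the left of $\ell$) and $O_+$ (the union of unstable branches of fixed points to the right), and then proves three structural facts: $O_-$ and $O_+$ are closed, which uses Lemma~\ref{unstable_goes_to_infinity} (branches escape every compact set) to get local finiteness; $(O_-)^c$ and $(O_+)^c$ are each connected, which uses Lemma~\ref{density_unboundedness} (density of each branch in $\Bbb T^2$) together with the non-intersection of stable and unstable branches to rule out a bounded complementary component; and $O_-\cap O_+=\emptyset$, which is Lemma~\ref{no_heteroclinic_points}. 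A one-point compactification and the separation theorem of Newman (Lemma~\ref{Newman}) then give that $(O_-\cup O_+)^c$ is connected, and any properly embedded $(a,b)$-periodic arc chosen inside this connected open set serves as $\widetilde\gamma$. Your proposal never uses density to get connectedness, nor a separation theorem to combine the two complements, and these are the ingredients that make the paper's argument close. To make your route rigorous you would, at minimum, need a lemma that two disjoint branches escaping to opposite ends of the cylinder under the Step-1 confinement admit a common separating graph, and then an argument that all these pointwise graph constraints are mutually compatible; neither is supplied, and it is not obvious that the graph normalization can be salvaged.
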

 \begin{proof}   
 
 Start with a vertical line $\ell$, oriented upwards, 
which does not contain any $\widetilde f$-fixed point. 
The complement, $\ell^c,$
consists of two unbounded connected components. 
We denote by $\mathcal R(\ell)$ (respectively, $\mathcal L(\ell)$)
the right component (respectively, the left component).
Let $O_-$ (respectively, $O_+$) 
denote 
the union of the 
stable branches (respectively, unstable branches) of all the $\widetilde f$-fixed points belonging 
to $\mathcal L(\ell)$ (respectively, $\mathcal R(\ell)$). 
We claim that both $O_-$ and $O_+$
 are closed sets.
  The arguments are similar, so it suffices to 
  prove the claim for 
  $O_-$.  

Let $\{z_i\}_{i\geq 1}$ 
be a sequence of points in $O_-$ 
converging to some point $z$. 
Choose a small closed disk 
$B(z,1)$
containing $z$.
By Theorem~\ref{Fabio_Bounded}, Lemmas 6.2 and ~\ref{unstable_goes_to_infinity}, 
there are only finitely many $\widetilde f$-fixed points in $\mathcal L(\ell)$, whose 
stable branches  
intersects $B(z,1)$. 
Moreover, 
the intersection of one such stable branch with $B(z,1)$
is a closed set.  So $O_-\bigcap B(z,1)$ is closed. 
Therefore, $z$ belongs to $O_-$, 
which implies that $O_-$ is closed.

It is clear that $(O_-)^c$
has a connected component 
which is unbounded to the right. More precisely, 
this component contains some translated domain
$\mathcal R(\ell)+(M',0)$, where $M'$ is a positive constant
obtained from the constant $M$ in
(\ref{Bounded_in_-v}) by defining $M'=M\cos \theta$, where $\theta$
is the angle between the horizontal line and the vector $(\alpha,\beta)$. 

Suppose 
by contradiction that $(O_-)^c$ is not connected. 
Then there exists a connected component $B$, 
which is contained in $\mathcal L(\ell)+(M',0)$. 
Observe $B$ is open and its
boundary $\partial B$ is contained in $O_-$, 
and recall that the unstable branch of every 
$f$-fixed point is dense in $\Bbb T^2$. 
It follows that, 
some unstable branch of some $\widetilde f$-fixed point 
intersects $B$. 
Since the branch is unbounded to the right, 
it must
intersect the boundary of $B$, 
which is a contradiction, because stable and
unstable branches do not intersect.
So, $(O_-)^c$ is connected. The same holds true for $(O_+)^c$.

Now consider the one point compactification of the plane, identified with $\Bbb S^2$.
The two closed sets $O_-$ and $O_+$ lift to $\widehat{O_-}$ and $\widehat {O_+}$, 
respectively, which are connected closed subsets, 
 because every stable and unstable branch lifts to some closed set containing 
the point $\infty\in S^2$. Clearly, $\widehat{O_-} \bigcap \widehat {O_+} =\{\infty\}$. 
By Lemma~\ref{Newman}, 
the complement of $O_- \bigcup O_+$ is
an open connected subset of $\Bbb R^2$.
Note that, if a point $\widetilde z\in (O_- \bigcup O_+)^c$,
then $\widetilde z+(0,k)\in (O_- \bigcup O_+)^c$ 
for any $k\in \Bbb Z$, 
because of
the relations 
$O_-=O_-+(0,1)$ 
and $O_+=(O_+)+(0,1)$.

So, we can choose an arc $\delta$ 
connecting $\widetilde z$ and $\widetilde z+(0,1)$ 
such that 
\begin{align}
\delta \bigcap (O_- \bigcup O_+) & =\emptyset \text{ and } \\
\delta \bigcap (\delta+(0,1)) & \text{ contains exactly one point}.
\end{align}
Therefore, the union 
\begin{equation}
\widetilde \gamma:= \bigcup_{i\in \Bbb Z}( \delta+(0,i)).
\end{equation} 
is a properly embedded real line, which satisfies all four properties. 
 \end{proof}
 
 \begin{lemma}\label{good_neighbourhood} Let $\widetilde \gamma$ be obtained from Proposition~\ref{good_curve}.
 For any fixed point $\widetilde q\in\mathcal R(\widetilde \gamma)$, 
 there exists a small closed neighbourhood 
 $\widetilde K$ containing $\widetilde q$, such that
 $\{\widetilde f^n(\widetilde K)\}_{n\geq 0} \subset \mathcal R(\widetilde \gamma)$.
 In particular, the forward iterates of $\widetilde K$ do not intersect 
 $\widetilde \gamma$.
 \end{lemma}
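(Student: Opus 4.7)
The plan is a proof by contradiction, leveraging the local dynamics at $\widetilde q$ given by Lemma~\ref{section_two_lemma} (applicable since $f \in \mathcal K^r$ and Proposition~\ref{corollary_about_periodic_non_fixed_point} ensures $\widetilde q$ has index zero with exactly one stable and one unstable branch), together with Proposition~\ref{good_curve}(3), which forbids $W^u(\widetilde q)$ from intersecting $\widetilde \gamma$. First I would pick an open Jordan neighborhood $V$ of $\widetilde q$ with $\overline V \subset \mathcal R(\widetilde\gamma)$ on which the local picture from Figure~\ref{the_local_situation} is valid, and a closed sub-disk $\widetilde K \subset V$ as the candidate neighborhood. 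By the local saddle-sector structure, any orbit segment that enters $V$ and later leaves must exit through a small neighborhood of the unique exit point $W^u(\widetilde q) \cap \partial V$, and does not return to $V$ afterwards.

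Next, assume for contradiction that no $\widetilde K$ works, and extract a sequence $y_k \to \widetilde q$ together with minimal integers $n_k \geq 0$ for which $\widetilde f^{n_k}(y_k) \in \widetilde \gamma$. Since $\widetilde q \notin \widetilde \gamma$ by property~(2) of Proposition~\ref{good_curve}, continuity forces $n_k \to \infty$. Let $m_k$ be the last index with $\widetilde f^{m_k}(y_k) \in V$; the local analysis shows that, along a subsequence, the exit points $z_k := \widetilde f^{m_k+1}(y_k)$ converge to some $z_\ast \in W^u(\widetilde q) \cap \overline{V^c}$. Setting $p_k := n_k - m_k - 1$, we have $\widetilde f^{p_k}(z_k) \in \widetilde \gamma$; if $p_k$ stayed bounded, a subsequence would yield $\widetilde f^p(z_\ast) \in W^u(\widetilde q) \cap \widetilde \gamma$, contradicting Proposition~\ref{good_curve}(3). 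Hence $p_k \to \infty$.

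The main obstacle is to rule out this remaining case $p_k \to \infty$. By Theorem~\ref{Fabio_Bounded} together with Lemmas~\ref{Generic_Bounded_in_(-alpha,-beta)} and~\ref{unstable_goes_to_infinity}, the forward orbit of $z_\ast$ is properly divergent in $\Bbb R^2$, contained in a bounded strip perpendicular to $(\alpha,\beta)$, escaping to $+\infty$ in the direction $(\alpha,\beta)$, and remaining inside $W^u(\widetilde q) \subset \mathcal R(\widetilde \gamma)$. Each iterate $\widetilde f^j(z_\ast)$ is a regular (non-fixed) point, so a flow-box argument gives, for any fixed $j$, that a sufficiently small neighborhood of $z_\ast$ is carried by $\widetilde f^j$ into a small neighborhood of $\widetilde f^j(z_\ast)$, and hence into $\mathcal R(\widetilde\gamma)$. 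To make this shadowing uniform in the unbounded time $p_k$, I would iterate the construction: choose subsequent exit events far along the orbit of $z_\ast$, reducing the problem to the analogous one at a point much farther along $W^u(\widetilde q)$; the properness of the orbit of $z_\ast$ together with the uniform deviation bounds should force $\widetilde f^{p_k}(z_k)$ to lie eventually in a controlled tubular neighborhood of this proper orbit, disjoint from $\widetilde \gamma$, yielding the desired contradiction with $\widetilde f^{p_k}(z_k) \in \widetilde\gamma$. Making this uniformization and shadowing step rigorous is the principal technical difficulty.
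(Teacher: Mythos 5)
Your proposal does not close the argument, and you acknowledge this yourself at the end. The gap is real: the ``shadowing'' step you describe (showing that $\widetilde f^{p_k}(z_k)$ tracks the orbit of $z_\ast$ uniformly in the unbounded time $p_k$) cannot be made rigorous by pure continuity or flow-box arguments, since for a general diffeomorphism nearby orbits separate, and no hyperbolicity is available here to force shadowing. Iterating the construction farther along $W^u(\widetilde q)$, as you suggest, only pushes the same difficulty further out: you would still need a uniform bound, over all $k$, on how long you must wait before a neighborhood of $z_\ast$ is ``safe,'' and nothing in the flow-box observation supplies it.

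The paper's proof sidesteps this entirely by an a priori quantitative estimate rather than shadowing. One fixes a fundamental domain $\widetilde\lambda\subset W^u(\widetilde q)$ and uses Lemma~\ref{unstable_goes_to_infinity} to pick a \emph{fixed} integer $N$ with $\mathrm{dist}(\widetilde f^N(\widetilde\lambda),\widetilde\gamma)>2M+C_{\widetilde f}$, where $M$ is the constant of bounded deviation along $-(\alpha,\beta)$ from Lemma~\ref{Generic_Bounded_in_(-alpha,-beta)} and $C_{\widetilde f}$ the uniform displacement bound. By continuity over the finite block of iterates $0\le n\le N$, a small neighborhood $V$ of $\widetilde\lambda$ satisfies $\widetilde f^n(V)\cap\widetilde\gamma=\emptyset$ for $n\le N$ and $\widetilde f^N(V)$ lies far to the right of $\widetilde\gamma$. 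Then for $n>N$, the bounded deviation along $-(\alpha,\beta)$ (together with the perpendicular bounds of Theorem~\ref{Fabio_Bounded}) guarantees, without tracking any specific reference orbit, that $\widetilde f^n(V)$ cannot drift back enough to hit $\widetilde\gamma$. Finally, the local structure at $\widetilde q$ (one stable and one unstable branch) lets one choose a closed disk $K$ around $\widetilde q$ so that every orbit starting in $K$ either stays in $K$ (on the stable branch) or eventually passes through $V$, giving $\bigcup_{n\ge 0}\widetilde f^n(K)\subset K\cup\bigcup_{n\ge 0}\widetilde f^n(V)$. This is the idea your sketch is missing: the deviation bounds should be used as a global a priori estimate that makes the far iterates harmless, not as input to a shadowing argument. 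With that reorientation, your preliminary setup (local sector picture, $z_\ast\in W^u(\widetilde q)$, properness of the branch) is essentially correct and could be recast into the paper's direct construction instead of a contradiction scheme.
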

 \begin{proof}

Fix some fundamental domain $\widetilde{\lambda }$ of $W^u(\widetilde{q})$
very close to $\widetilde{q},$ whose endpoints are $\widetilde{y}$ and $
\widetilde{f}(\widetilde{y}).$ As $W^u(\widetilde{q})$ is unbounded to the
right, there exists $N>0$ such that 
$\text{dist}(\widetilde{f}^N(\widetilde{\lambda 
}),\widetilde{\gamma })>2M+C_{\widetilde f},$ where $M$ is the constant obtained in 
(\ref{Bounded_in_-v}), and $C_{\widetilde{f}}$ is given by:

\begin{equation}\label{C_tildef_definition}
C_{\widetilde f}=\max_{\widetilde z \in \Bbb R^2} \|\widetilde f(\widetilde z) -\widetilde z\|.
\end{equation}

\begin{figure}[!ht]
\begin{center}
\includegraphics[width=7cm]{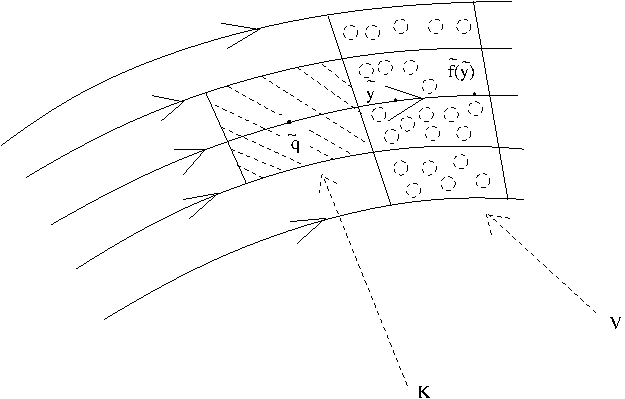}
\caption{The Neighbourhoods $K$ and $V$}
\label{figlemma68}
\end{center}
\end{figure}

Note that $\bigcup_{n=0}^\infty \widetilde f^n(\widetilde \lambda)$ does not intersect $\widetilde \gamma$. 
We can choose a small open neighbourhood $V$ of $\widetilde{\lambda }$,
such that 
$\bigcup_{n=0}^N$$\widetilde{f}^n(V)$ is sufficiently close to 
$\bigcup_{n=0}^N 
\widetilde{f}^n(\widetilde{\lambda })$, so that 
it does not intersect $
\widetilde{\gamma }$. 
Observing Lemma~\ref{Generic_Bounded_in_(-alpha,-beta)}, 
we can also ensure that
$\widetilde{f}^n(V)$
does not intersect $\widetilde{\gamma },$ for all $n>N$. 
Finally, choose a small neighbourhood $K$ of $\widetilde q$, 
such that, for every point in $K$, either it belongs to the stable branch of $\widetilde q$, 
or it has a forward iterate belonging to $V$
 (See Figure~\ref{figlemma68} for the choices of these neighbourhoods).
 In fact, 
 $\bigcup_{n=0}^\infty \widetilde f^n(K)\subset K\bigcup (\bigcup_{n=0}^\infty \widetilde f^n(V))$. 
 Therefore, all non-negative iterates of $K$ can not leave 
 $\mathcal R(\widetilde{\gamma }).$     \end{proof}

  \begin{lemma}\label{N_times_becomes_free}
  Let $\widetilde \gamma$ be the curve obtained in 
  Proposition~\ref{good_curve}.
  Then there exists
  a positive integer $N$ such that, 
  \begin{equation}\label{gamma_is_free}
  \widetilde f^N(\widetilde \gamma)\bigcap \widetilde \gamma =\emptyset.
  \end{equation}
  \end{lemma}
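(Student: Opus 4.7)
The plan is to show that every point of $\widetilde\gamma$ has a forward $\widetilde f$-orbit that eventually enters an open set $\widetilde W\subset\mathcal R(\widetilde\gamma)$ whose $\widetilde f$-iterates stay in $\mathcal R(\widetilde\gamma)$, and then to promote this to a uniform time $N$ using compactness of a fundamental domain for the $(a,b)$-action on $\widetilde\gamma$. The set $\widetilde W$ is built from two pieces: a ``far right'' half-plane $R_+$ and the open interiors of the closed neighborhoods $\widetilde K_{\widetilde q}$ provided by Lemma~\ref{good_neighbourhood} around each $\widetilde f$-fixed point $\widetilde q\in\mathcal R(\widetilde\gamma)$. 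To build $R_+$, combine the bounded deviation statements in directions $(\pm\beta,\mp\alpha)$ (Theorem~\ref{Fabio_Bounded}) and $-(\alpha,\beta)$ (Lemma~\ref{Generic_Bounded_in_(-alpha,-beta)}). Decomposing
$$(b,-a)=c_1(-\beta,\alpha)+c_2(\alpha,\beta),\qquad c_2=\frac{b\alpha-a\beta}{\alpha^2+\beta^2}>0$$
(using~(\ref{b-a_assumption})), these bounds yield a constant $C'>0$ with $\text{pr}_{(b,-a)}(\widetilde f^n(\widetilde z)-\widetilde z)\geq -C'$ for every $\widetilde z\in\Bbb R^2$ and $n\geq 0$. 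Since $\widetilde\gamma+(a,b)=\widetilde\gamma$ and $(a,b)\perp(b,-a)$, $\text{pr}_{(b,-a)}$ is bounded above on $\widetilde\gamma$ by some $M_1$, and a short topological argument shows $\widetilde\gamma\cup\mathcal L(\widetilde\gamma)\subset\{\text{pr}_{(b,-a)}\leq M_1\}$. Setting $R_+:=\{\text{pr}_{(b,-a)}>M_1+C'\}$ gives $R_+\subset\mathcal R(\widetilde\gamma)$ and $\widetilde f^n(R_+)\subset\{\text{pr}_{(b,-a)}>M_1\}\subset\mathcal R(\widetilde\gamma)$ for every $n\geq 0$.

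The core step is to show that for every $\widetilde x^*\in\widetilde\gamma$ the forward orbit eventually enters $\widetilde W=R_+\cup\bigcup_{\widetilde q\in\text{Fix}(\widetilde f)\cap\mathcal R(\widetilde\gamma)}\text{int}(\widetilde K_{\widetilde q})$. Let $h_n:=\text{pr}_{(b,-a)}(\widetilde f^n(\widetilde x^*))$. If $\limsup_n h_n=+\infty$, then some $h_n>M_1+C'$ and $\widetilde f^n(\widetilde x^*)\in R_+$. Otherwise $h_n$ is bounded above; combined with the perpendicular bound and the linear independence of $(b,-a)$ and $(-\beta,\alpha)$ (forced by~(\ref{b-a_assumption})), the orbit is bounded in $\Bbb R^2$, so $\omega(\widetilde x^*)$ is a non-empty compact set. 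Lemma~\ref{Brouwer}, applied to a small disk around any hypothetical non-fixed accumulation point (with $\widetilde f(U)\cap U=\emptyset$ and $\widetilde f^k(U)\cap U\neq\emptyset$ from recurrence), would produce a fixed point with positive index, contradicting Proposition~\ref{corollary_about_periodic_non_fixed_point}; hence $\omega(\widetilde x^*)\subset\text{Fix}(\widetilde f)$, which, being bounded and composed of isolated points (Definition~\ref{generic_in_generic}), is finite. A separation-of-neighborhoods argument then reduces it to a single point: for putative limits $\widetilde w_1,\dots,\widetilde w_k$ choose pairwise disjoint open $N_i\ni\widetilde w_i$ and smaller $N_i'\subset N_i$ with $\widetilde f(N_i')\subset N_i$ (possible at fixed points by continuity); since the orbit is eventually in $\bigcup N_i'$ and any step from $N_i'$ lands in $N_i$, which is disjoint from all $N_j'$ with $j\neq i$, the orbit stays in one $N_i'$, forcing $\omega(\widetilde x^*)=\{\widetilde w\}$. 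Then $\widetilde x^*\in W^s(\widetilde w)$; property~(2) of Proposition~\ref{good_curve} gives $\widetilde w\notin\widetilde\gamma$, and property~(4) forbids $\widetilde w\in\mathcal L(\widetilde\gamma)$ (else the stable branch of $\widetilde w$ misses $\widetilde\gamma$, contradicting $\widetilde x^*\in W^s(\widetilde w)\cap\widetilde\gamma$); so $\widetilde w\in\mathcal R(\widetilde\gamma)$ and, by Lemma~\ref{good_neighbourhood}, the tail of the orbit enters $\text{int}(\widetilde K_{\widetilde w})\subset\widetilde W$.

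To finish, for each $\widetilde x^*\in\widetilde\gamma$ continuity of $\widetilde f^{n_0(\widetilde x^*)}$ provides an open neighborhood $U_{\widetilde x^*}\ni\widetilde x^*$ with $\widetilde f^{n_0}(U_{\widetilde x^*})\subset\widetilde W$; by forward trapping of $\widetilde W$ into $\mathcal R(\widetilde\gamma)$, this gives $\widetilde f^n(U_{\widetilde x^*})\subset\mathcal R(\widetilde\gamma)$ for all $n\geq n_0(\widetilde x^*)$. A compact fundamental domain $F$ of $\widetilde\gamma$ under $(a,b)$-translation is covered by finitely many such $U_{\widetilde x^*_i}$; taking $N:=\max_i n_0(\widetilde x^*_i)$ and using that $\widetilde f$ commutes with $(a,b)$-translation while $\widetilde\gamma$ and $\mathcal R(\widetilde\gamma)$ are $(a,b)$-invariant yields $\widetilde f^N(\widetilde\gamma)\subset\mathcal R(\widetilde\gamma)$, which gives~(\ref{gamma_is_free}). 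I expect the hard part to be the bounded-orbit sub-case of the $\omega$-analysis: reducing $\omega(\widetilde x^*)$ to a single fixed point and then pinning it into $\mathcal R(\widetilde\gamma)$ via property~(4) of Proposition~\ref{good_curve}. This is where the hypotheses $f\in\mathcal K^r$ (zero-index fixed points, isolation) and the careful construction of $\widetilde\gamma$ are really used in tandem with Brouwer's translation lemma.
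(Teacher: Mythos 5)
Your proof is correct and proceeds by essentially the same mechanism as the paper's: three-sided bounded deviation to control the orbit, Brouwer plus the zero-index restriction to force the $\omega$-limit of a bounded orbit to be a single fixed point, property (4) of Proposition~\ref{good_curve} to place that fixed point in $\mathcal R(\widetilde\gamma)$, and Lemma~\ref{good_neighbourhood} to trap the tail of the orbit in $\mathcal R(\widetilde\gamma)$. The main organizational difference is that you argue directly via a forward-trapping set $\widetilde W$ plus a covering of a compact fundamental domain, whereas the paper argues by contradiction from a sequence of points with unbounded return times and analyzes only the accumulation point; these are two presentations of the same germ of argument. One detail where your version is actually tighter: you measure escape to infinity with $\text{pr}_{(b,-a)}$, which is bounded on the $(a,b)$-periodic line $\widetilde\gamma$, whereas the paper phrases the unbounded case using $\inf_{\widetilde w\in\widetilde\gamma}\text{pr}_{(\alpha,\beta)}(\cdot-\widetilde w)$, which is literally $-\infty$ since $\langle(a,b),(\alpha,\beta)\rangle\neq 0$; the intended estimate there is really the one you wrote, so your formulation repairs a small imprecision.
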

  
  \begin{proof}
  Suppose by contradiction that
  there exists some sequence
  of points $\widetilde z_{(n)} \in \widetilde \gamma$ 
  such that $\widetilde f^n(\widetilde z_{(n)})\in \widetilde \gamma$. 
  Noticing item (1) of Proposition~\ref{good_curve}, 
  we can choose all the $\widetilde z_{(n's)}$ in a compact fundamental domain of
  $\widetilde{\gamma}$, denoted $K.$ In particular, 
  they have an 
  accumulation point 
  $\widetilde z_\ast$. 
  Up to extracting 
  a subsequence, simply 
  assume $\widetilde z_{(n)} \to \widetilde z_\ast$.
  
Moreover, from Theorem~\ref{Fabio_Bounded}, there exists a constant $M^*>0$ such 
that, for all integers $n>0,$ $\widetilde{f}^n(K) \cap \widetilde{\gamma}$ is contained in the 
$M^*$-neighbourhood of $K$ in $\widetilde{\gamma}$.

  By Theorem~\ref{Fabio_Bounded} and
  Lemma~\ref{Generic_Bounded_in_(-alpha,-beta)},
  the forward orbit $\{\widetilde f^n(\widetilde z_\ast)\}_{n\geq 0}$
  is bounded in three 
  directions $(-\alpha,-\beta), (-\beta,\alpha), (\beta,-\alpha)$.
  Then, either $\{\widetilde f^n(\widetilde z_\ast)\}_{n\geq 0}$
  is unbounded in the direction 
  $(\alpha,\beta)$, or it is bounded. 
  We seek contradictions in both cases. 
  
  If $\{\widetilde f^n(\widetilde z_\ast)\}_{n\geq 0}$ is bounded, 
  then the omega limit set $\omega(\widetilde z_\ast)$
  must be a single fixed point, otherwise, by the arguments 
  used in the proof of Lemma~\ref{unstable_goes_to_infinity}, 
  one finds a positive index fixed point, which does not exist by 
  Proposition~\ref{corollary_about_periodic_non_fixed_point}.
  So $\widetilde z_\ast$ 
  belongs to some stable branch $W^s(\widetilde q)$,
  for some fixed point 
$\widetilde q \in \mathcal R(\widetilde \gamma)$. 
  By Lemma~\ref{good_neighbourhood},
  there exists a compact neighbourhood $\widetilde K$ of $\widetilde q$, 
  whose forward iterates do not intersect $\widetilde \gamma$. And
  there exists a positive integer $m_0$, such that 
  $\bigcup_{k=0}^{m_0} \widetilde f^{-k}(\widetilde K)$ 
  contains some neighbourhood $N$
  of 
  $\widetilde z_\ast$.  This is a contradiction, because for sufficiently 
  large  integers $m > m_0$, 
  $\widetilde z_{(m)}\in N$
  and $\widetilde f^m(\widetilde z_{(m)})\in \widetilde \gamma$.
 
  The other case is when the orbit of $\widetilde z_\ast$ is unbounded. 
  Then, for sufficiently large $k_0$, 
  \begin{equation} 
  \inf_{\widetilde w\in K} \text{pr}_{(\alpha,\beta)} 
  (\widetilde f^{k_0}(\widetilde z_\ast)- \widetilde w) > 10(M+M^*+1).
  \end{equation} 
  Then, when $m\geq k_0$ is sufficiently large, 
      \begin{equation}
    \text{pr}_{(\alpha,\beta)} 
    \big(\widetilde f^{k_0}(\widetilde z_{(m)})-\widetilde z_{(m)} \big) >10(M+M^*+1).
    \end{equation}
    Noticing  Lemma~\ref{Generic_Bounded_in_(-alpha,-beta)}, 
    this provides a contradiction, 
    since 
$$
\widetilde f^m(\widetilde z_{(m)})\in M^* \text{-neighbourhood of } K.
$$
    The proof is completed now. 
  \end{proof}

    For the oriented curve $\widetilde \gamma$ from 
    Proposition~\ref{good_curve} above,
    $\mathcal R(\widetilde \gamma)$ denotes 
    the unbounded connected component of $(\widetilde  \gamma)^c$ 
  in the direction of $(\alpha,\beta)$. In 
  Lemma~\ref{N_times_becomes_free}, we have obtained the integer $N$ such that 
  $\widetilde f^N(\widetilde \gamma) \subset \mathcal R(\widetilde \gamma)$.
  
  The following is a standard argument. Consider the finite union of 
  curves, 
  \begin{equation}
  Q:= \bigcup_{j=0}^{N-1} \widetilde f^j(\widetilde \gamma).
  \end{equation}
  Clearly, the complement of $Q$ has a component which is unbounded in the
  direction of $(\alpha,\beta)$. If   
  $\widetilde \ell$ is the boundary of this component, then 
  $\widetilde f(\widetilde \ell) \cap \widetilde \ell=\emptyset.$
This $\widetilde \ell$ is clearly the lift of a vertical loop in the torus. And so the proof of Theorem \ref{Thm_Generic_Family} is over.
   \end{proof}

We close this section by restating 
and proving  the remaining part of Theorem~\ref{generic_family_not_perturbable}.

\begin{theorem}[Remains of Theorem~\ref{generic_family_not_perturbable}] 
Let $\widetilde f$ denote some lift of 
$f\in \mathcal K^r$, and 
$\rho(\widetilde f)$ is the line segment from 
$(0,0)$ to $(\alpha,\beta)$.
Let $\gamma$ be any 
straight line passing through 
$(0,0)$, which does not contain $\rho(\widetilde f)$.
Then there exists $\varepsilon_0>0$ 
such that for any 
$\widetilde g\in \widetilde{ \text{Homeo}}_0(\Bbb T^2)$, 
 which is $C^0$-$\varepsilon_0$-close to $\widetilde f$, 
the rotation set $\rho(\widetilde g)$ does not intersect the 
connected component of $\gamma^c$ which does not intersect $\rho(\widetilde f)$.
\end{theorem}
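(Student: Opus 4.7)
The plan is to derive half-plane constraints on $\rho(\widetilde g)$ from the $(a,b)$-periodic Brouwer lines provided by Theorem~\ref{Thm_Generic_Family}, and then combine two such constraints in complementary directions to force $\rho(\widetilde g)$ away from $\mathcal H_\gamma^-$, the component of $\gamma^c$ not containing $\rho(\widetilde f)\setminus \{(0,0)\}$.

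First I would establish a Brouwer-line--to--half-plane lemma: if $\widetilde h \in \widetilde{\text{Homeo}}_0^+(\Bbb T^2)$ admits a properly embedded line $\widetilde \ell$ with $\widetilde \ell+(a,b)=\widetilde \ell$ for primitive $(a,b)\in \Bbb Z^2$ and $\widetilde h(\widetilde \ell)\subset \mathcal R(\widetilde \ell)$, then $\rho(\widetilde h) \subset H^+_{(a,b)}$, where $H^+_{(a,b)}$ is the closed half-plane bounded by the line through $(0,0)$ parallel to $(a,b)$, on the side $v^\perp$ points (with $v^\perp \perp (a,b)$ chosen so that $\mathcal R(\widetilde \ell)$ lies in the $+v^\perp$ direction). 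The argument exploits full $\Bbb Z^2$-equivariance: letting $w \in \Bbb Z^2$ complement $(a,b)$ with $v^\perp\cdot w > 0$, each translate $\widetilde \ell - kw$ is also a Brouwer line of the same type, since $\widetilde h(\widetilde \ell-kw)=\widetilde h(\widetilde \ell)-kw \subset \mathcal R(\widetilde \ell)-kw = \mathcal R(\widetilde \ell - kw)$. For any $\widetilde x \in \Bbb R^2$, choosing $k$ large enough places $\widetilde x \in \mathcal R(\widetilde \ell - kw)$; then the entire forward orbit stays in this region, giving a uniform lower bound $\text{pr}_{v^\perp}(\widetilde h^n(\widetilde x) - \widetilde x) \geq -C_{\widetilde x}$ for all $n \geq 0$. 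Dividing by $n$ and applying the Misiurewicz--Ziemian definition of $\rho(\widetilde h)$ shows every $\rho \in \rho(\widetilde h)$ satisfies $\rho \cdot v^\perp \geq 0$.

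Second, this property is $C^0$-stable. Since $\widetilde \ell$ is $(a,b)$-periodic, $\widetilde f(\widetilde \ell)$ and $\widetilde \ell$ are uniformly separated on a compact fundamental domain for the $(a,b)$-translation, so there exists $\delta_{(a,b)}>0$ such that any $\widetilde g$ with $\text{dist}_{C^0}(\widetilde g, \widetilde f)<\delta_{(a,b)}$ still satisfies $\widetilde g(\widetilde \ell)\subset \mathcal R(\widetilde \ell)$, transferring the half-plane constraint $\rho(\widetilde g) \subset H^+_{(a,b)}$. Now, by density of rational directions in $S^1$, I would choose two primitive integer vectors $(a_1,b_1), (a_2,b_2)$ whose directions lie strictly on opposite sides of $\gamma$'s direction, each close enough to $\gamma$ that $(\alpha,\beta)$ lies in both closed half-planes $H^+_{(a_i,b_i)}$ (automatic for directions close enough to $\gamma$, since $(\alpha,\beta)\notin \gamma$). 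In the coordinate system where $\gamma=\{y=0\}$ and $(\alpha,\beta)$ has positive $y$-coordinate, the lines $L_{(a_i,b_i)}: y=m_ix$ have slopes $m_1,m_2$ of opposite signs, and a direct computation gives $H^+_{(a_1,b_1)}\cap H^+_{(a_2,b_2)} = \{(x,y): y \geq \max(m_1x, m_2x)\} \subset \{y\geq 0\} = \overline{\mathcal H_\gamma^+}$.

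Setting $\varepsilon_0 := \min(\delta_{(a_1,b_1)}, \delta_{(a_2,b_2)})$, any $\widetilde g$ with $\text{dist}_{C^0}(\widetilde g, \widetilde f)<\varepsilon_0$ inherits both half-plane constraints, so $\rho(\widetilde g) \subset H^+_{(a_1,b_1)}\cap H^+_{(a_2,b_2)} \subset \overline{\mathcal H_\gamma^+}$, whence $\rho(\widetilde g)\cap \mathcal H_\gamma^- = \emptyset$, which is the desired conclusion. The main technical obstacle I anticipate is the Brouwer-line half-plane lemma in the first paragraph: although the argument sketched above is conceptually simple, it requires genuinely exploiting the full $\Bbb Z^2$-equivariance of the lift (not just $(a,b)$-periodicity) so that every integer translate of $\widetilde \ell$ is also a co-oriented Brouwer line. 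The remaining ingredients---perturbation stability by compactness on a fundamental domain, and the elementary half-plane intersection geometry in rotated coordinates---are routine.
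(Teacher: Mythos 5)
Your proposal is correct and follows the same structure as the paper's proof: invoke Theorem~\ref{Thm_Generic_Family} to get two $(a,b)$-periodic Brouwer lines, observe that each is $C^0$-stable and confines $\rho(\widetilde g)$ to a closed half-plane through the origin, and intersect two such half-planes to keep $\rho(\widetilde g)$ off the bad side of $\gamma$. The only minor difference is in the choice of the two integer directions (the paper takes a small cone around $(\alpha,\beta)$ inside the good half-plane, you take directions on opposite sides of and close to $\gamma$); both choices work, and your write-up usefully spells out the Brouwer-line-to-half-plane step that the paper leaves implicit.
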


\begin{proof}
Choose two reduced integer vectors, $(a,b)$ and $(a',b')$, with the following properties. 
\begin{enumerate}
\item the two rays from $(0,0)$ in the directions $(a,b)$ and $(a',b')$ define 
a closed cone $C$ 
which contains the vector $(\alpha,\beta)$ in its interior. 
\item the interior of $C$ is contained in one of the connected component of $\gamma^c$.
\end{enumerate}
By Theorem~\ref{Thm_Generic_Family}, there are two $\widetilde f$-Brouwer lines $\widetilde \ell_1$ and $\widetilde \ell_2$, 
such that, $\widetilde \ell_1+(a,b)=\widetilde \ell_1$, and $\widetilde \ell_2+(a',b')=\widetilde \ell_2$. Since both
$\widetilde \ell_1$ and $\widetilde \ell_2$ are 
lifts of simple closed curves in $\Bbb T^2$,  
there exists $\varepsilon_0$, 
such that, for any $\widetilde g\in \widetilde{\text{Homeo}}_0(\Bbb T^2)$,
with $\text{dist}_{C^0}(\widetilde g,\widetilde f)<\varepsilon_0$, 
those two lines $\widetilde \ell_1$ and $\widetilde \ell_2$ are still Brouwer lines for $\widetilde g$. 
This implies that $\rho(\widetilde g)\subset C$. 
In particular, $\rho(\widetilde g)\backslash \{(0,0)\}$
is contained in the connected component of $\gamma^c$ which contains 
$\rho(\widetilde f)\backslash \{(0,0)\}$.
\end{proof}

\section{Unbounded Deviations}
In this section, we show the following theorem.

\begin{theorem}\label{unbounded_final}[Theorem~\ref{generic_bounded_unbounded} restated]
Suppose $\widetilde f$ is a lift of some $f\in \mathcal K^r$, and 
$\rho(\widetilde f)$ is a segment from $(0,0)$
to a totally irrational point $(\alpha,\beta)$. 
Assume further that 
$f$ preserves a 
foliation on $\Bbb T^2$.
Then $\widetilde f$ has 
unbounded deviation along the direction $(\alpha,\beta)$.
\end{theorem}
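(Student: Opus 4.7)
I argue by contradiction. Suppose $\widetilde{f}$ has bounded deviation along $(\alpha,\beta)$ in the sense of Definition~\ref{definition_deviation}: there exists $M>0$ with $\text{pr}_{(\alpha,\beta)}(\widetilde{f}^n(\widetilde{x}) - \widetilde{x}) - n(\alpha^2+\beta^2) \leq M$ for every $n \geq 0$ and every $\widetilde{x} \in \Bbb R^2$. Combined with Corollary~\ref{coro_bounded} (bounded deviation along $-(\alpha,\beta)$) and Theorem~\ref{Fabio_Bounded} (bounded deviation along the two perpendicular directions), this yields bounded deviation of $\widetilde{f}$ along every direction. Applying Lemma~\ref{geq_-M}, every ergodic invariant measure supported in $\mathcal{S}_{(\alpha,\beta)}$ has rotation vector exactly $(\alpha,\beta)$, and for every $x \in \mathcal{S}_{(\alpha,\beta)}$ and any lift $\widetilde{x}$ one has $\bigl|\text{pr}_{(\alpha,\beta)}(\widetilde{f}^n(\widetilde{x}) - \widetilde{x}) - n(\alpha^2+\beta^2)\bigr| \leq M'$ for a uniform constant $M'>0$.

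I then pick any minimal set $K \subset \mathcal{S}_{(\alpha,\beta)}$. Since $\rho(\widetilde{f},K) = \{(\alpha,\beta)\}$ and $\widetilde{f}|_{\pi^{-1}(K)}$ has bounded deviation along every direction, Lemma~\ref{jager} produces a continuous surjective semi-conjugacy $\phi: K \to \Bbb T^2$, homotopic to the inclusion, with $\phi \circ f|_K = R_{(\alpha,\beta)} \circ \phi$. The invariant foliation $\mathcal{F}$ enters next. A closed leaf of $\mathcal{F}$ would represent some nontrivial rational homology class, forcing the quotient circle dynamics to have a single rotation number and thereby trapping $\rho(\widetilde{f})$ inside a rational affine line; since the segment joining $(0,0)$ to a totally irrational $(\alpha,\beta)$ is not contained in any such line, every leaf of $\mathcal{F}$ is dense. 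A parallel argument on rotation vectors forces the asymptotic leaf direction to be parallel to $(\alpha,\beta)$. In particular, the leaf $L_p$ through the fixed point $p$ (which exists since $(0,0) \in \rho(\widetilde{f})$) is a dense $f$-invariant line, and by Proposition~\ref{corollary_about_periodic_non_fixed_point} combined with Lemma~\ref{density_unboundedness}, $L_p = W^s(p) \cup \{p\} \cup W^u(p)$, with $W^s(p)$ dense in $\Bbb T^2$ and $\widetilde{f}^n(\widetilde{q}) \to \widetilde{p}$ for every $\widetilde{q} \in W^s(\widetilde{p})$.

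The endgame is to exhibit $\widetilde{q} \in W^s(\widetilde{p})$ whose projection lies in $\mathcal{S}_{(\alpha,\beta)}$. For such $\widetilde{q}$, the quantity $\text{pr}_{(\alpha,\beta)}(\widetilde{f}^n(\widetilde{q}) - \widetilde{q})$ is bounded in $n$ (it converges to $\text{pr}_{(\alpha,\beta)}(\widetilde{p} - \widetilde{q})$), while the uniform two-sided bound displayed above forces it to exceed $n(\alpha^2+\beta^2) - M'$, which is absurd for large $n$. This intersection claim is precisely the step I expect to be the main obstacle. My plan to carry it out is to exploit the foliation-compatible nature of $\phi$ and extend it, via the dense leaves of $\mathcal{F}$, to a continuous semi-conjugacy $\Phi: \Bbb T^2 \to \Bbb T^2$ intertwining $f$ and $R_{(\alpha,\beta)}$. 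Either such an extension directly places points of $W^s(p)$ inside $\mathcal{S}_{(\alpha,\beta)}$, or equivalently it yields $\Phi(p) = R_{(\alpha,\beta)}(\Phi(p))$, which is impossible since $R_{(\alpha,\beta)}$ is fixed-point-free on $\Bbb T^2$ for totally irrational $(\alpha,\beta)$. Either way we reach a contradiction, and so $\widetilde{f}$ must have unbounded deviation along $(\alpha,\beta)$.
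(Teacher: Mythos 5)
Your opening moves match the paper exactly: contradiction hypothesis, the two-sided bound on $\mathcal{S}_{(\alpha,\beta)}$ from Lemma~\ref{geq_-M}, and the identification of the invariant leaf through the fixed point $p$ as $W^s(p)\cup\{p\}\cup W^u(p)$. The crucial observation that a point $\widetilde{q}$ with $\widetilde{f}^n(\widetilde{q})\to\widetilde{p}$ cannot satisfy the lower bound $\text{pr}_{(\alpha,\beta)}(\widetilde{f}^n(\widetilde{q})-\widetilde{q})\geq n(\alpha^2+\beta^2)-M'$ is also sound, and is in spirit what the paper uses. But the step where you actually close the argument is where the proposal breaks.

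You want to place a point of $W^s(p)$ inside $\mathcal{S}_{(\alpha,\beta)}$, and your mechanism is to extend the semi-conjugacy $\phi:K\to\Bbb T^2$ from the minimal set $K$ to a global semi-conjugacy $\Phi:\Bbb T^2\to\Bbb T^2$. This extension cannot exist, and you have already noticed why: it would force $\Phi(p)=R_{(\alpha,\beta)}(\Phi(p))$, impossible since $R_{(\alpha,\beta)}$ is fixed-point-free. But a non-existence statement about $\Phi$ does not contradict the bounded deviation hypothesis; it merely shows that this particular construction is unavailable. Lemma~\ref{jager} (Gottschalk--Hedlund) produces a semi-conjugacy only on a minimal set, and a torus homeomorphism with a fixed point is never globally semi-conjugate to a minimal rotation by a map homotopic to the identity, regardless of deviation bounds. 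So the ``either...or'' dichotomy at the end is not a dichotomy at all: both horns land on ``$\Phi$ does not exist,'' leaving the bounded deviation assumption untouched. The paper's proof avoids this trap entirely by not routing through the semi-conjugacy: instead, it (i) observes that the two-sided bound forces orbits in $\mathcal{S}_{(\alpha,\beta)}$ to avoid a small disk $B$ around $p$, and then (ii) uses the foliation structure near $p$ to construct a small transversal ``gate'' $K'\subset B$, inside a strip bounded by a lift of the leaf $\mathcal F(p)$ and an integer translate of it, through which every lift-unbounded orbit must pass. Choosing a $\mu$-typical point in $\mathcal{S}_{(\alpha,\beta)}$ (whose lift orbit is unbounded since $\rho_\mu(\widetilde{f})=(\alpha,\beta)$) gives the contradiction directly. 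If you want to keep your route, you would need to replace the $\Phi$-extension step with an actual proof that $W^s(p)\cap\mathcal{S}_{(\alpha,\beta)}\neq\emptyset$, or, better, adopt the gate construction, which sidesteps that question.
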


\begin{proof}
Let us assume by contradiction that
 there exists $M>0$, such that
 \begin{equation}\label{Bounded_by_S}
\sup_{\widetilde x \in \Bbb R^2, n\geq 1} 
\text{pr}_{(\alpha,\beta)} 
\big ( \widetilde f^n (\widetilde x) -\widetilde x -n(\alpha,\beta) \big) 
\leq M. 
 \end{equation}
Recalling Definition~\ref{Mab_Sab},
$\mathcal S_{(\alpha,\beta)}$ is the closure of the union of the
support of all the $f$-invariant ergodic probability 
measures whose average rotation vector for $\widetilde f$
is $(\alpha,\beta)$. 
Then, 
Lemma~\ref{geq_-M} shows that
 for any lift $\widetilde x\in \Bbb R^2$ of 
 some $x\in \mathcal S_{(\alpha,\beta)}$,  
and any $n\geq 1$,
  \begin{equation}\label{Bounded_Below_again}
   \text{pr}_{(\alpha,\beta)} \big(\widetilde f^n(\widetilde x) -\widetilde x -n(\alpha,\beta) \big) \geq -M.
      \end{equation}
      Clearly, for any fixed point $p$, there exists a small disk 
      $B$ containing 
      $p$, such that for any point in $B$, its $f$-iterates will 
       remain close to $p$ for a long time, both 
       in the future and in the past. 
       Expression (\ref{Bounded_Below_again}) 
      implies immediately that, 
      when $B$ is sufficiently small,
      the whole orbit of an arbitrary point in $\mathcal S_{(\alpha,\beta)}$ does not intersect 
       $B$.

Choose a fixed point $p$.
Since $f$ preserves the foliation $\mathcal F$, 
then the leaf 
$\mathcal F(p)$
containing $p$ must be the union of $W^s(p)$ and $W^u(p)$. 
Choose a local leaf $L\subset \mathcal F(p)$, 
which connects some point $y\in W^s(p)$ and 
$y'\in W^u(p)$.  Let $V$ be an open neighbourhood of $L$ such that for
any local leaf in $V,$ its forward and backward iterates under $f$ also intersect $V$.
Choose two small arcs $\gamma$ and $\gamma'$, both contained in $V$, transverse to the local foliation restricted to $V$, such that  
the arc $\gamma$ connects $y$ 
to a point $x$ and $\gamma'$ connects $y'$ to a point $x'$. Moreover, $f(\gamma)$ and $f^{-1}(\gamma')$ are also both contained in $V$
and $x$ and $x'$ bound a local leaf $\theta^+$. 
It is also convenient to choose $\theta^+$ so that it belongs either to 
$W^u(p)$ or $W^s(p)$. This is possible because both branches are dense in $\Bbb T^2$, see Lemma \ref{density_unboundedness}.
Denote the 
closed region bounded by 
$\gamma,\theta^+,\gamma',L$ as $K$
(See Figure~\ref{fig7a}). Note that $K$ can be chosen 
arbitrarily close to $L$.
 
\begin{figure}[!ht]
\begin{center}
\includegraphics[width=8cm]{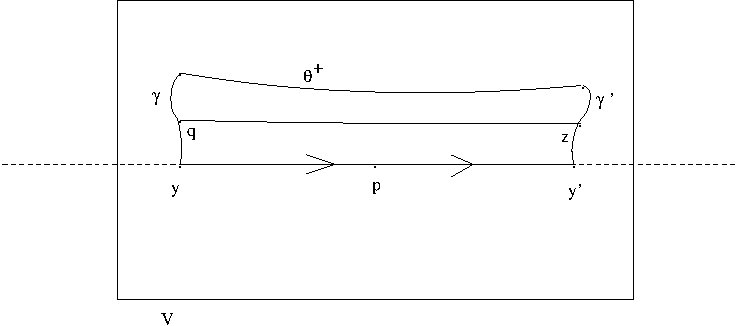}
\caption{Local Foliation around $p$.}
\label{fig7a}
\end{center}
\end{figure}
 
The claim is that, 
some local leaf in $K$, 
which is contained in $W^s(p)$ or $W^u(p)$, 
and different from $L$,
must contain a fundamental domain of $W^s(p)$ or $W^u(p)$,
that is, 
some sub-arc connecting a point and its image. 
Suppose by contradiction that this is not true. 

Then one of the following cases must happen (see Figure~\ref{fig7a}). 
\begin{enumerate}
\item $f(\theta^+)$ intersects $\theta^+$.
\item $f(\theta^+)$ is above $\theta^+$.
\item $f(\theta^+)$ is below $\theta^+$.
\end{enumerate}

If case (1) happens, since  $\theta^+$ belongs to  
$W^s(p)$ or $W^u(p)$, 
then it contains a fundamental domain of $W^s(p)$ or $W^u(p)$ and 
we are done. 

Up to considering the backward dynamics and 
exchanging the roles of 
stable and unstable branch, 
we can simply assume $f(\theta^+)$ is below $\theta^+$.
Then, $f(\gamma)$ is contained in $K$, provided 
the region is chosen sufficiently close to $L$. 

Since $W^s(p)$ is dense, we can 
follow it until the first time it enters the region $K$.
Denote  by $z \in W^s(p)$
the first entering point $(z \in \gamma')$. 
The local leaf containing $z$, denoted $T$, 
intersects $\gamma$
at a point $q$. 
If $f^{-1}(z) \in T$, 
then we find the fundamental domain in $T$. 
If $f^{-1}(z) \notin T$, then $f(q)\in K$, and this contradicts 
the fact that $z$ is the first returning point to $K$ along $W^s(p)$.

So, there is a fundamental domain of $W^s(p)$
contained in some local leaf in $K$, other than $L$.
Now we pick a lift $\widetilde p$ of $p$, and consider 
the lifted leaf $\widetilde{\mathcal F}(\widetilde p)$
containing 
$W^s(\widetilde p)$
and $W^u(\widetilde p)$. 
By previous paragraphs, 
there is some integer 
$(a,b)$ such that, 
the curves $\widetilde {\mathcal F}(\widetilde p)$ and 
$\widetilde {\mathcal F}(\widetilde p)+(a,b)$ bound an 
infinite strip $\widetilde H$, whose union with these two curves 
covers $\Bbb T^2$. 
Moreover,
we can find a small fundamental domain for 
$\widetilde f$ restricted to $\widetilde H$, namely 
$\widetilde K'\subset \widetilde K$, 
such that for any point 
$\widetilde z \in \widetilde H$ whose orbit is positively and negatively unbounded, 
its orbit must intersect $\widetilde K'$ (See Figure~\ref{fig7b}).

\begin{figure}[!ht]
\begin{center}
\includegraphics[width=8cm]{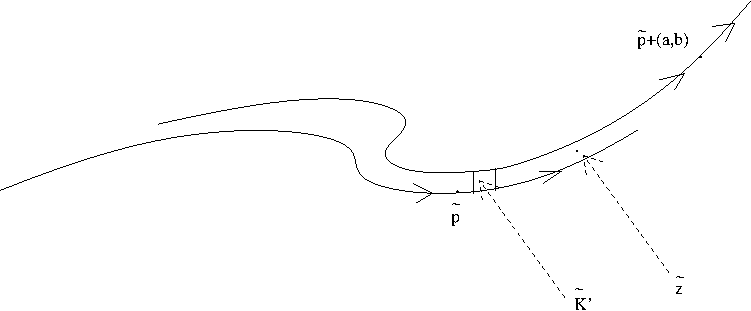}
\caption{A Fundamental Domain that All Unbounded Trajectories cross.}
\label{fig7b}
\end{center}
\end{figure}

On the other hand, we can choose $K \subset B$, 
where the disk $B$
was obtained at the beginning of the proof. 
Therefore, $K'=\pi(\widetilde K') \subset K\subset  B$
intersects the orbit of any chosen point in  $\mathcal S_{(\alpha,\beta)}$ (one
whose orbit is unbounded both in the future and past). And this is a
contradiction with the fact that $\mathcal S_{(\alpha,\beta)}$ avoids $B$.
\end{proof}

\bibliographystyle{plain}
\bibliography{Perturbation}

\end{document}